\documentclass[a4paper, 11pt]{amsart}   
\usepackage{amssymb,amscd,latexsym}   
\usepackage{amsmath}
\usepackage{amsthm}
\usepackage{mathdots}
\usepackage[pagebackref,colorlinks=true,linkcolor=blue,urlcolor=blue]{hyperref}
\usepackage{color}
\usepackage{tabularx}
\usepackage{amsfonts}
\usepackage{paralist}
\usepackage{aliascnt}
\usepackage[initials]{amsrefs}
\usepackage{amscd}
\usepackage{blkarray}
\usepackage{mathbbol}
\usepackage{setspace}
\usepackage[inner=2.5cm,outer=2.5cm, bottom=3.3cm]{geometry}
\usepackage{tikz, tikz-cd}

\usetikzlibrary{matrix}
\usetikzlibrary{arrows}


\BibSpec{collection.article}{%
	+{}  {\PrintAuthors}                {author}
	+{,} { \textit}                     {title}
	+{.} { }                            {part}
	+{:} { \textit}                     {subtitle}
	+{,} { \PrintContributions}         {contribution}
	+{,} { \PrintConference}            {conference}
	+{}  {\PrintBook}                   {book}
	+{,} { }                            {booktitle}
	+{,} { }                            {series}
	+{, vol.} { }                            {volume}
	+{,} { }                            {publisher}
	+{,} { \PrintDateB}                 {date}
	+{,} { pp.~}                        {pages}
	+{,} { }                            {status}
	+{,} { \PrintDOI}                   {doi}
	+{,} { available at \eprint}        {eprint}
	+{}  { \parenthesize}               {language}
	+{}  { \PrintTranslation}           {translation}
	+{;} { \PrintReprint}               {reprint}
	+{.} { }                            {note}
	+{.} {}                             {transition}
	+{}  {\SentenceSpace \PrintReviews} {review}
}


\newcommand{\kk}{\mathbb{k}}

\newcommand{\bbF}{\mathbb{F}}

\newcommand{\GG}{\mathcal{G}}

\newcommand{\M}{\mathbb{M}}
\newcommand{\NN}{\normalfont\mathbb{N}}
\newcommand{\ZZ}{{\normalfont\mathbb{Z}}}
\newcommand{\Z}{{\normalfont\mathbb{Z}}}

\newcommand{\PP}{\normalfont\mathbb{P}}

\newcommand{\mm}{{\normalfont\mathfrak{m}}}

\newcommand{\SSS}{\mathbb{S}}

\newcommand{\pp}{{\normalfont\mathfrak{p}}}

\newcommand{\qqq}{\normalfont\mathfrak{q}}

\newcommand{\nnn}{\pp}

\newcommand{\rank}{\normalfont\text{rank}}
\newcommand{\reg}{\normalfont\text{reg}}

\newcommand{\depth}{\normalfont\text{depth}}

\newcommand{\Tor}{\normalfont\text{Tor}}
\newcommand{\Ext}{\normalfont\text{Ext}}

\newcommand{\Ker}{\normalfont\text{Ker}}
\newcommand{\Coker}{\normalfont\text{Coker}}
\newcommand{\Quot}{\normalfont\text{Quot}}
\newcommand{\IM}{\normalfont\text{Im}}

\newcommand{\Sym}{\normalfont\text{Sym}}
\newcommand{\Rees}{\mathcal{R}}
\newcommand{\Hom}{\normalfont\text{Hom}}
\newcommand{\grHom}{{}^*\Hom}

\newcommand{\BBB}{\mathfrak{B}}

\newcommand{\LL}{\mathbb{L}}

\newcommand{\Cc}{\mathcal{C}}

\newcommand{\HL}{\normalfont\text{H}_{\mm}}
\newcommand{\HH}{\normalfont\text{H}}
\newcommand{\bBB}{\normalfont\text{B}}
\newcommand{\zZZ}{\normalfont\text{Z}}
\newcommand{\cCC}{\normalfont\text{C}}

\newcommand{\AAA}{\mathfrak{A}}

\newcommand{\bideg}{\normalfont\text{bideg}}
\newcommand{\Proj}{\normalfont\text{Proj}}
\newcommand{\Spec}{{\normalfont\text{Spec}}}
\newcommand{\spec}{{\normalfont\text{Spec}}}



\newcommand{\N}{\mathbb{N}}

\newcommand{\ip}{\mathfrak{p}}

\newtheorem{theorem}{Theorem}[section]

\newtheorem{headthm}{Theorem}

\newaliascnt{headcor}{headthm}

\aliascntresetthe{headcor}

\newaliascnt{headconj}{headthm}

\aliascntresetthe{headconj}

\newaliascnt{corollary}{theorem}
\newtheorem{corollary}[corollary]{Corollary}
\aliascntresetthe{corollary}

\newaliascnt{lemma}{theorem}
\newtheorem{lemma}[lemma]{Lemma}
\aliascntresetthe{lemma}

\newaliascnt{conjecture}{theorem}

\aliascntresetthe{conjecture}

\newaliascnt{proposition}{theorem}
\newtheorem{proposition}[proposition]{Proposition}
\aliascntresetthe{proposition}

\theoremstyle{definition}
\newaliascnt{definition}{theorem}
\newtheorem{definition}[definition]{Definition}
\aliascntresetthe{definition}

\newaliascnt{notation}{theorem}
\newtheorem{notation}[notation]{Notation}
\aliascntresetthe{notation}

\newaliascnt{example}{theorem}
\newtheorem{example}[example]{Example}
\aliascntresetthe{example}

\newaliascnt{examples}{theorem}

\aliascntresetthe{examples}

\newaliascnt{remark}{theorem}
\newtheorem{remark}[remark]{Remark}
\aliascntresetthe{remark}

\newaliascnt{problem}{theorem}

\aliascntresetthe{problem}

\newaliascnt{construction}{theorem}

\aliascntresetthe{construction}

\newaliascnt{setup}{theorem}
\newtheorem{setup}[setup]{Setup}
\aliascntresetthe{setup}

\newaliascnt{algorithm}{theorem}

\aliascntresetthe{algorithm}

\newaliascnt{observation}{theorem}

\aliascntresetthe{observation}

\newaliascnt{defprop}{theorem}

\aliascntresetthe{defprop}

\def\equationautorefname~#1\null{(#1)\null}
\def\sectionautorefname~#1\null{Section #1\null}
\def\subsectionautorefname~#1\null{\S #1\null}


\begin{document}

\title{Generic freeness of local cohomology and  graded specialization}

\author{Marc Chardin}
\address[Chardin]{Institut de Math\'ematiques de Jussieu. UPMC, 4 place Jussieu, 75005 Paris, France}
\email{marc.chardin@imj-prg.fr}

\author{Yairon Cid-Ruiz}
\address[Cid-Ruiz]{Department of Mathematics: Algebra and Geometry, 9000 Ghent, Belgium.}
\address[Cid-Ruiz]{Max Planck Institute for Mathematics in the Sciences, Inselstra\ss e 22, 04103 Leipzig, Germany.}
\email{cidruiz@mis.mpg.de}
\urladdr{https://ycid.github.io}

\author{Aron Simis}
\address[Simis]{Departamento de Matem\'atica, CCEN, 
	Universidade Federal de Pernambuco, 
	50740-560 Recife, PE, Brazil}
\email{aron@dmat.ufpe.br}

\begin{abstract}
	The main focus is the generic freeness of local cohomology modules in a graded setting.
	The present approach takes place in a quite nonrestrictive setting, by solely assuming that the ground coefficient ring is Noetherian. Under additional assumptions, such as when the latter is reduced or a domain, the outcome turns out to be stronger. One important application of these considerations is to the specialization of rational maps and of symmetric and Rees powers of a module. 
\end{abstract}

\subjclass[2010]{Primary: 13D45, Secondary: 13A30, 14E05.}

\keywords{local cohomology, generic freeness, specialization, rational maps, symmetric algebra, Rees algebra.}

\maketitle


\section{Introduction}

Although the actual strength of this paper has to do with generic freeness in graded local cohomology, we chose to first give an overview of one intended application to specialization theory.

\medskip

Specialization is a classical and important subject in algebraic geometry and commutative algebra. 
Its roots can be traced back to seminal work by Kronecker, Hurwitz (\cite{Hurwitz}), Krull (\cite{Krull_I,Krull_II}) and Seidenberg (\cite{Seidenberg}).   
More recent papers where specialization is used in the classical way are \cite{Trung_specialization_in_german}, \cite{TRUNG_SPECIALIZATION}, \cite{Trung_specialization_local} and, in a different vein,  \cite{EISENBUD_HUNEKE_SPECIALIZATION}, \cite{SIMIS_ULRICH_SPECIALIZATION}, \cite{Residual_int}, \cite{Generic_residual_int}, \cite{Ulrich_RedNo},  \cite{ram1}, \cite{SPECIALIZATION_RAT_MAPS}.

In the classical setting it  reads as follows.
Let $\kk$ be a field and $R$ be a polynomial ring $R=\kk(\mathbf{z})[\mathbf{x}]=\kk(\mathbf{z})[x_1,\ldots,x_r]$ over a purely transcendental field extension $\kk(\mathbf{z})=\kk(z_1,\ldots,z_m)$ of $\kk$.
Let $I \subset R$ be an ideal of $R$ and $\alpha = (\alpha_1,\ldots,\alpha_m) \in \kk^m$. 
The specialization of $I$ with respect to the substitutions $z_i \rightarrow \alpha_i$ is given by the ideal 
$$
I_\alpha := \big\lbrace f(\alpha, \mathbf{x}) \mid f(\mathbf{z},\mathbf{x}) \in I \cap \kk[\mathbf{z}][\mathbf{x}] \big\rbrace.
$$
Setting $J: = I \cap \kk[\mathbf{z}][\mathbf{x}]$, the canonical map $\pi_\alpha : \kk[\mathbf{z}][\mathbf{x}] \twoheadrightarrow \kk[\mathbf{z}][\mathbf{x}]/\left(\mathbf{z}-\alpha\right)$ yields the identification 
\begin{equation}
\label{eq_intro_map_specialization}
I_\alpha \simeq \pi_\alpha(J),
\end{equation}
which is the gist of the classical approach.

\medskip

One aim of this paper is to introduce a notion of specialization on more general settings in the graded category, whereby 
$\kk(\mathbf{z})$ will be replaced by a Noetherian reduced ring $A$ and a finitely generated graded $A$-algebra will take the place of $R$.
The emphasis of this paper will be on graded modules, and more specifically, on the graded parts of local cohomology modules.

In order to recover the essential idea behind \autoref{eq_intro_map_specialization} in our setting, we now explain the notion of specialization used in this work. 
 In the simplest case, let $A$ be a Noetherian ring, let $R$ be a finitely generated positively graded $A$-algebra and let $\mm \subset R$ denote the graded irrelevant ideal $\mm = \left[R\right]_+$.
Here, for simplicity, let $M$ be a finitely generated torsion-free graded $R$-module having rank, with a fixed embedding $\iota : M \hookrightarrow F$ into a finitely generated graded free $R$-module $F$.
For any $\pp \in \Spec(A)$,  the specialization of $M$ with respect to $\pp$ will be defined to be
$$
\SSS_\nnn(M) = \IM\Big( \iota \otimes_A k(\nnn): M \otimes_A k(\nnn) \rightarrow F \otimes_{A} k(\pp)\Big)
$$
where $k(\pp) = A_\pp/\pp A_\pp$ is the residue field of $\pp$.
If $M$ is not torsion-free, one kills its $R$-torsion and proceed as above.
It can be shown that the definition is independent on the chosen embedding for general choice of $\pp\in\Spec(A)$ (see \autoref{lem_specialization_powers_fiber}).

The true impact of the present approach is the following.

\begin{headthm}[\autoref{cor_specialization_module}]
	Let $A$ be a Noetherian reduced ring and $R$ be a finitely generated positively graded $A$-algebra.
	Let $M$ be a finitely generated graded $R$-module having rank.
	Then, there exists a dense open subset $\mathcal{U} \subset \Spec(A)$ such that, for all $i \ge 0, j \in \ZZ$, the function
	\begin{equation*}
	\Spec(A) \longrightarrow \ZZ,  \qquad
	\nnn \in \Spec(A) \mapsto\dim_{k(\nnn)}\Bigg( {\left[\HL^i\big(\SSS_\nnn(M)\big)\right]}_{j} \Bigg)
	\end{equation*}
	is locally constant on $\mathcal{U}$.
\end{headthm}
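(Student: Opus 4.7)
The plan is to derive this statement from the paper's central generic freeness theorem for graded local cohomology, together with a generic reduction of the definition of $\SSS_\nnn(M)$ to ordinary base change along $A\to k(\nnn)$.

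\emph{Reduction to a torsion-free $M$.} Let $T\subset M$ denote the $R$-torsion submodule. By the definition of specialization recalled in the introduction, $\SSS_\nnn(M)=\SSS_\nnn(M/T)$, so I would replace $M$ by $M/T$ from the outset and assume $M$ is torsion-free, equipped with a fixed graded embedding $\iota\colon M\hookrightarrow F$ into a finitely generated graded free $R$-module $F$; this does not affect the rank hypothesis.

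\emph{Specialization becomes plain base change on a dense open.} I would apply the standard generic flatness lemma (for Noetherian reduced $A$, via its finitely many minimal primes) to the short exact sequence of finitely generated graded $R$-modules
\[
0\longrightarrow M\longrightarrow F\longrightarrow F/M\longrightarrow 0,
\]
obtaining a dense open $\mathcal{U}_1\subset\Spec(A)$ over which $M$, $F$, and $F/M$ are all $A$-flat. For $\nnn\in\mathcal{U}_1$ the vanishing of $\Tor_1^A(F/M,k(\nnn))$ forces $\iota\otimes_A k(\nnn)$ to be injective, so
$$
\SSS_\nnn(M)\;=\;\IM\big(\iota\otimes_A k(\nnn)\big)\;=\;M\otimes_A k(\nnn).
$$
Next I would invoke the paper's main generic freeness result for graded local cohomology: it provides a dense open $\mathcal{U}_2\subset\Spec(A)$ such that for all $i\ge 0$ and $j\in\ZZ$, the graded piece $[\HL^i(M)]_j$ is a locally free $A$-module of constant rank on each connected component of $\mathcal{U}_2$, and its formation commutes with the base change $A\to k(\nnn)$ for every $\nnn\in\mathcal{U}_2$. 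Setting $\mathcal{U}:=\mathcal{U}_1\cap\mathcal{U}_2$, for $\nnn\in\mathcal{U}$ one would combine the two ingredients into
$$
\big[\HL^i(\SSS_\nnn(M))\big]_j\;=\;\big[\HL^i(M\otimes_A k(\nnn))\big]_j\;=\;\big[\HL^i(M)\big]_j\otimes_A k(\nnn),
$$
whose $k(\nnn)$-dimension is the rank of the locally free $A$-module $[\HL^i(M)]_j$ at $\nnn$, a locally constant function of $\nnn$ on $\mathcal{U}$.

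\emph{Main obstacle.} The genuinely delicate step is the invocation of generic freeness: in this graded setting the range of $j$ is unbounded (the top local cohomologies are typically nonzero in a semi-infinite interval of degrees), so one must secure a \emph{single} dense open of $\Spec(A)$ on which freeness and base-change compatibility hold simultaneously across \emph{all} pairs $(i,j)$. Establishing this uniformity is the principal technical content of the paper itself; granted it, the corollary falls out in the clean form above.
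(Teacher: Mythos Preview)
Your proposal is correct and follows essentially the same route as the paper: the paper's proof of this corollary (\autoref{cor_specialization_module}) simply combines \autoref{lem_specialization_powers_fiber} (which is your step ``specialization becomes plain base change on a dense open,'' proved there via generic projectivity of the cokernel of the embedding into a free module) with \autoref{cor_dimension_fiber_local_cohom} (which is exactly your invocation of the main generic freeness theorem, \autoref{thm_relative_graded_local_duality}, together with the fiberwise rank characterization of projective modules). Your identification of the ``main obstacle''---uniformity of the open set over all $(i,j)$---is precisely what \autoref{thm_relative_graded_local_duality} supplies.
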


Alas, although one can control all the graded parts  of the specialization of $M$, not so much for all higher symmetric and Rees powers, whereby the results will only be able to control certain graded strands. 

Anyway, one has enough to imply the local constancy of numerical invariants such as dimension, depth, $a$-invariant and regularity   under a general specialization (see \autoref{thm_general_fibers_powers_homolog_prop}). 

\medskip

The main tool  in this paper is the behavior of local cohomology of graded modules under generic localization with a view towards generic freeness (hence its inclusion in the title).
This is a problem of great interest in its own right, having been addressed earlier by several authors.
We will approach the matter in a quite nonrestrictive setting, by assuming at the outset that $A$ is an arbitrary Noetherian ring. 
When $A$ is a domain or, sometimes, just a reduced ring, one recovers and often extends some results by Hochster and Roberts \cite[Section 3]{HOCHSTER_ROBERTS} and by Smith \cite{KSMITH}.

An obstruction for local freeness of local cohomology of a finitely generated graded $R$-module $M$ is here described in terms of certain closed subsets of $\Spec(A)$ defined in terms of $M$ and  its $\Ext$ modules. 
To wit, it can be shown that the set
$$
U_M = \big\lbrace \pp \in \Spec(A) \mid  [M]_\mu  \otimes_A A_\ip \text{ is } A_\ip\text{-free for every } \mu \in \ZZ \big\rbrace
$$ 
is an open set of $\Spec(A)$, and that is dense when $A$ is reduced.
Its complement $T_M = \Spec(A) \setminus U_M$ will play a central role in this regard.

For convenience, set 
$
\left(M\right)^{*_A}:={^*\Hom_A}(M, A) = \bigoplus_{\nu \in \ZZ} \Hom_A\left({\left[M\right]}_{-\nu}, A\right).
$ 

The following theorem encompasses the main results in this direction, with a noted difference as to whether $A$ is a domain or just reduced.

\begin{headthm}[\autoref{thm_relative_graded_local_duality}]
	Let $A$ be a Noetherian ring and $R$ be a positively graded polynomial ring $R=A[x_1,\ldots,x_r]$ over $A$.
	Set $\delta = \deg(x_1)+\cdots + \deg(x_r) 
	\in \NN$.
	Let $M$ be a finitely generated graded $R$-module.
	\begin{enumerate}[\rm (I)]
		\item 	If $\pp \in \Spec(A)\setminus \left(T_M \cup \bigcup_{j=0}^\infty T_{\Ext_R^j(M, R)}\right)$, then the following statements hold for any $0\le i \le r$:
		\begin{enumerate}[\rm (a)]
			\item $\left[\HL^i(M \otimes_{A} A_\pp)\right]_\nu$ is free over $A_\pp$ for all $\nu \in \ZZ$.
			\item 
			For any $A_\pp$-module $N$, the natural map 
			$
			\HL^i(M) \otimes_{A_\ip} N \rightarrow \HL^i(M \otimes_{A_\ip} N)
			$
			is an isomorphism.
			\item 
			For any $A_\pp$-module $N$, there is an isomorphism 
			$$
			\HL^i\left(M \otimes_{A_\ip} N\right) \simeq {\left(\Ext_{R\otimes_{A} A_\pp}^{r-i}\left(M\otimes_{A} A_\pp, R(-\delta) \otimes_{A} A_\pp\right)\right)}^{*_{A_\pp}} \otimes_{A_\ip} N.
			$$			
		\end{enumerate}
	
		\item Let $F_\bullet : \cdots \rightarrow F_k \rightarrow \cdots \rightarrow F_1 \rightarrow F_0 \rightarrow 0$ be a graded free resolution of $M$ by modules of finite rank. 
		If $\pp \in \Spec(A) \setminus \left(T_M \cup T_{D_M^{r+1}} \cup  \bigcup_{j=0}^r T_{\Ext_R^j(M, R)}\right)$ where $D_M^{r+1} = \Coker\left(\Hom_R(F_r,R) \rightarrow \Hom_R(F_{r+1},R) \right)$, then the same statements as in {\rm (a), (b), (c)} of {\rm (I)} hold.
		
		\item If $A$ is reduced, then there exists an element $a \in A$ avoiding the minimal primes of $A$ such that, for any $0 \le i \le r$, the following statements hold:
		\begin{enumerate}[\rm (a)]
			\item $\HL^i(M \otimes_A A_a)$ is projective over $A_a$.
			\item For any $A_a$-module $N$, the natural map 
			$
			\HL^i(M) \otimes_{A_a} N \rightarrow \HL^i(M \otimes_{A_a} N)
			$
			is an isomorphism.
			\item For any $A_a$-module $N$, there is an isomorphism 
			$$
			\HL^i\left(M \otimes_{A_a} N\right) \simeq {\left(\Ext_{R\otimes_{A} A_a}^{r-i}\left(M\otimes_{A} A_a, R(-\delta) \otimes_{A} A_a\right)\right)}^{*_{A_a}} \otimes_{A_a} N.
			$$			
		\end{enumerate}
		\item If $A$ is a domain, then there exists an 	element $0 \neq a \in A$ such that $\left[\HL^i(M \otimes_A A_a)\right]_\nu$ is free over $A_a$ for all $\nu \in \ZZ$.
	\end{enumerate}
\end{headthm}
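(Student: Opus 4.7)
Since $A$ is reduced (being a domain), Part~(III) applies and produces some $a_1 \in A \setminus \{0\}$ such that, for each $0 \le i \le r$, the graded local duality isomorphism
\[
\HL^i(M \otimes_A A_{a_1}) \cong \Bigl(\Ext^{r-i}_{R \otimes A_{a_1}}(M \otimes_A A_{a_1},\, R(-\delta) \otimes_A A_{a_1})\Bigr)^{*_{A_{a_1}}}
\]
holds. Extracting the $\nu$-th graded strand identifies $[\HL^i(M \otimes_A A_{a_1})]_\nu$ with $\Hom_{A_{a_1}}\!\bigl([\Ext^{r-i}_R(M, R(-\delta))]_{-\nu} \otimes_A A_{a_1},\, A_{a_1}\bigr)$. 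Since the $A_a$-dual of a finite free $A_a$-module is again finite free of the same rank, the statement of~(IV) reduces to producing a single nonzero multiple $a$ of $a_1$ such that each strand $[\Ext^{r-i}_R(M, R(-\delta))]_{-\nu} \otimes_A A_a$ is $A_a$-free.

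Applying the openness of $U_N \subseteq \Spec(A)$ from the introduction to each of the finitely many modules $N^{(i)} := \Ext^{r-i}_R(M, R(-\delta))$, $0 \le i \le r$: since $A$ is a domain, the generic point $(0)$ lies in every $U_{N^{(i)}}$ (every finitely generated module over the fraction field is free), so each complement $T_{N^{(i)}}$ is a proper closed subset of $\Spec(A)$, contained in $V(a_{1,i})$ for some $0 \ne a_{1,i} \in A$. Setting $a_2 := a_1 \prod_i a_{1,i}$ makes $D(a_2) \subseteq \bigcap_i U_{N^{(i)}}$, so every strand $[N^{(i)}]_\mu$ is free over $A_\pp$ at every $\pp \in D(a_2)$. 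Combined with finite generation of each strand over $A$, this ensures that $[N^{(i)}]_\mu \otimes_A A_{a_2}$ is finitely generated projective over $A_{a_2}$ of constant rank.

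The main obstacle is the passage from strand-wise projectivity over $A_{a_2}$ to strand-wise freeness over a further localization $A_a$. Over a Noetherian domain, a finitely generated projective module of constant rank need not be free: the Picard-class obstruction to freeness of any single such module can be cleared by inverting a further nonzero element, but handling the infinite family $\{[N^{(i)}]_\mu\}_{\mu \in \ZZ,\, 0 \le i \le r}$ uniformly is the crux of the matter. This is precisely where the domain hypothesis is decisive, being strictly stronger than the reduced hypothesis of Part~(III). The plan is to invoke, or establish as an auxiliary lemma in the spirit of~\cite{HOCHSTER_ROBERTS}, a graded generic freeness statement: for any finitely generated graded module $N$ over the positively graded polynomial ring $R = A[x_1, \ldots, x_r]$ with $A$ a Noetherian domain, there exists $0 \ne a \in A$ rendering each strand $[N \otimes_A A_a]_\mu$ free over $A_a$. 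Applied to each $N^{(i)}$ and combined with the duality isomorphism above, this yields the single element $a$ required to conclude~(IV).
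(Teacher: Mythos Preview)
Your plan for Part~(IV) is correct and follows essentially the same route as the paper: first invoke Part~(III) to obtain the duality isomorphism over some $A_{a'}$, then apply a graded generic freeness lemma (this is exactly \autoref{thm_generic_freeness_mult_grad}(ii) in the paper, established just before the theorem) to each of the finitely many modules $\Ext_R^j(M,R)$, $0 \le j \le r$, to get a single $0 \neq a'' \in A$ making all strands free, and set $a = a'a''$. Your intermediate detour through the open sets $U_{N^{(i)}}$ to obtain strand-wise projectivity is unnecessary---the paper bypasses it entirely by going straight to the graded generic freeness statement---but it does no harm, and you correctly identify that projectivity alone is insufficient and that the domain-specific freeness lemma is the decisive ingredient.
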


Above, the fact that when $A$ is a reduced ring, but not a domain, the local cohomology modules $\HL^i(M \otimes_A A_a)$ are projective  but not necessarily free over $A_a$, seems to be a hard knuckle.
As if to single out this difficulty, we note that
\cite[Corollary 1.3]{KSMITH} might not be altogether correct -- we give a counter-example in \autoref{examp_generic_proj}.

\medskip

Having briefly accounted for the main results on the generic localization of local cohomology and how it affects the problem of specialization, we turn to the question of specializing the powers of a module.
We consider both symmetric powers as Rees powers, leaving out wedge powers for future consideration.
Then, one is naturally led to focus on a bigraded setting as treated in \autoref{section_loc_cohom_bigrad}.
In this setting, we will be able to control certain graded strands, but unfortunately not all graded parts. This impairment is not due to insufficient work, rather mother nature as  has been proved by Katzman (\cite{KATZAMAN}; see \autoref{examp_Katzman}).

For the main result of the section, one assumes that $A$ is a Noetherian reduced ring and $\mathfrak{R} = A[x_1,\ldots,x_r,y_1,\ldots,y_s]$ is a
 bigraded polynomial ring with $\bideg(x)=(\delta_i,0)$ with $\delta_i > 0$ and $\deg(y_i)=(-\gamma_i,1)$ with $\gamma_i \ge 0$.
Set $\mm:=(x_1,\ldots,x_r)\mathfrak{R}$, the extension to  $\mathfrak{R}$ of the irrelevant graded ideal of the positively graded polynomial ring $A[x_1,\ldots,x_r]$.

Then:
\begin{headthm}
{\rm (\autoref{thm_local_cohom_general_fiber_bigrad_mod})}
Let $\M$ be a finitely generated bigraded $\mathfrak{R}$-module. For a fixed integer $j \in \ZZ$, there exists a dense open subset $\mathcal{U}_j \subset \Spec(A)$ such that, for all $i \ge 0, \nu \in \ZZ$, the function
\begin{equation*}
\Spec(A) \longrightarrow \ZZ, \qquad  
\nnn \in \Spec(A) \mapsto\dim_{k(\nnn)}\Bigg( {\left[\HL^i\big(\M \otimes_A k(\nnn)\big)\right]}_{(j,\nu)} \Bigg)
\end{equation*}
is locally constant on $\mathcal{U}_j$.	
\end{headthm}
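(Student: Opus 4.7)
The strategy is to reduce to Theorem~B over the Noetherian subring $B := A[x_1, \ldots, x_r]$ by decomposing along the $y$-degree, and then to control all $\nu$ at once (for fixed $j$) by exhibiting the first-degree-$j$ strand of $\HL^i(\M)$ as a finitely generated module over the Noetherian subring $\mathfrak{R}_{(0,*)} \subset \mathfrak{R}$ of first-degree-zero elements.

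View $B$ as the standard positively $\NN$-graded polynomial ring with $\deg(x_i) = \delta_i > 0$. Since $\mm$ is extended from the irrelevant ideal of $B$, the \v{C}ech complex on $x_1, \ldots, x_r$ respects the $y$-degree decomposition $\M = \bigoplus_\nu \M_\nu$, with $\M_\nu := \bigoplus_{j'} [\M]_{(j',\nu)}$ a finitely generated graded $B$-module (vanishing for $\nu \ll 0$); hence
$$
[\HL^i(\M)]_{(j,\nu)} = [\HL^i_{(x)B}(\M_\nu)]_j.
$$
Splitting a bigraded $\mathfrak{R}$-free resolution $\mathcal{F}_\bullet \to \M$ of finite rank along the $y$-degree (bigraded $\mathfrak{R}$-linear maps automatically preserve $y$-degree) yields a finite rank graded $B$-free resolution $(\mathcal{F}_\bullet)_\nu \to \M_\nu$ for every $\nu$, so Theorem~B applies to each $\M_\nu$ over $B$ and gives generic freeness with commutation of base change on a $\nu$-dependent dense open set.

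The crux is to upgrade these $\nu$-dependent statements into a single dense open $\mathcal{U}_j$ valid for all $\nu$. The structural fact one establishes is that, for fixed $j$, the first-degree-$j$ strand
$$
\bigoplus_\nu [\HL^i(\M)]_{(j,\nu)}
$$
is a finitely generated graded module over the $A$-subalgebra $\mathfrak{R}_{(0,*)} := \bigoplus_\nu \mathfrak{R}_{(0,\nu)}$. By Gordan's lemma, $\mathfrak{R}_{(0,*)}$ is itself a finitely generated positively $\NN$-graded $A$-algebra; the finite generation of the strand comes from analysis of the bigraded \v{C}ech complex applied to $\mathcal{F}_\bullet$. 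Although the individual \v{C}ech terms (localizations) can have non-finitely-generated first-degree strands, the local cohomology---being a quotient which restricts the monomial support---admits, for fixed $j$, only finitely many ``new'' generators, with the remaining degrees of freedom sitting inside $\mathfrak{R}_{(0,*)}$ (this uses crucially $\delta_i > 0$ and $\gamma_l \ge 0$). Applying the openness and density of $U_N$ together with part~(III) of Theorem~B to the finitely generated $\mathfrak{R}_{(0,*)}$-modules $[\HL^i(\M)]_{(j,*)}$, for $i = 0, \ldots, r$, yields a single dense open $\mathcal{U}_j \subset \Spec(A)$ on which all the relevant graded pieces $[\HL^i(\M)]_{(j,\nu)}$ are projective over $A_\nnn$ of locally constant rank, and whose formation commutes with the base change to $k(\nnn)$. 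Passing to fibers gives
$$
\dim_{k(\nnn)}\bigl[\HL^i(\M \otimes_A k(\nnn))\bigr]_{(j,\nu)} = \rank_{A_\nnn}\bigl([\HL^i(\M)]_{(j,\nu)} \otimes_A A_\nnn\bigr),
$$
which is locally constant on $\mathcal{U}_j$.

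The main obstacle is precisely the finite-generation claim of the preceding paragraph: despite $\HL^i(\M)$ failing to be finitely generated over $\mathfrak{R}$, its first-degree-$j$ strand is finitely generated over the Noetherian ring $\mathfrak{R}_{(0,*)}$. This structural property is what reduces infinitely many $\nu$-contributions to a single invocation of the generic freeness machinery of Theorem~B, and it relies essentially on the finite generation of $\M$ over the Noetherian bigraded ring $\mathfrak{R}$ together with the geometry of the bigrading.
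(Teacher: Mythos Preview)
Your central insight—that for fixed $j$ the strand $\bigoplus_\nu [\HL^i(\M)]_{(j,\nu)}$ is finitely generated over a finitely generated $A$-subalgebra, so that one application of generic projectivity handles all $\nu$ at once—is correct and is exactly the heart of the argument. The paper proves precisely this, though over the simpler polynomial subring $S = A[y_i : \gamma_i = 0]$ rather than the toric ring $\mathfrak{R}_{(0,*)}$; see \autoref{rem_finite_local_cohom} and \autoref{prop_auxiliary_results_bigrad}(ii).

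However, your invocation of ``part~(III) of Theorem~B'' is misplaced. Theorem~B concerns $\HL^i(M)$ for $M$ finitely generated over a polynomial ring $A[x_1,\ldots,x_r]$ with $\mm$ generated by \emph{all} the variables. You are neither taking local cohomology of $[\HL^i(\M)]_{(j,*)}$, nor does $\M$ over $\mathfrak{R}$ fit that setup (since $\mm=(x)\mathfrak{R}$ omits the $y$'s and $\M$ is not finitely generated over $B$). What you actually need at that step is just generic projectivity of the finitely generated graded $S$-module $[\HL^i(\M)]_{(j,*)}$, i.e.\ \autoref{cor_generic_projective}.

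More seriously, you assert that ``formation commutes with the base change to $k(\nnn)$'' without proof; projectivity of $[\HL^i(\M)]_{(j,*)}$ alone does not yield this. A correct way to close the gap along your lines: pass to $\LL_\bullet := \HL^r(\bbF_\bullet)$, whose degree-$j$ strand $[\LL_\bullet]_j$ is a complex of finitely generated free $S$-modules with homology $[\HL^i(\M)]_j$. On the open set where all these homologies are $A$-projective, \autoref{flatexchange} gives $\HH_{r-i}([\LL_\bullet]_j \otimes_A k(\nnn)) \cong [\HL^i(\M)]_j \otimes_A k(\nnn)$; combining with \autoref{prop_auxiliary_results_bigrad}(iii) (which in turn needs generic $A$-flatness of $\M$) yields the base change you claim.

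The paper sidesteps this base-change issue entirely. It applies generic projectivity (\autoref{prop_auxiliary_results_bigrad}(iv)) not to the homologies $\HH_i(\LL_\bullet)$ but to the cokernels $\cCC_i(\LL_\bullet)$—still finitely generated over $S$ in degree $j$—and then reads off $\dim_{k(\nnn)}\HH_{r-i}(\LL_\bullet \otimes_A k(\nnn))$ from the four-term exact sequence of \autoref{rem_eq_dim_in_terms_of_Cokers}, which holds for \emph{every} $\nnn$. This is cleaner: the fiber dimension becomes an alternating sum of ranks of projective modules, with no need to prove that homology itself commutes with base change.
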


The rationale of the paper is that the first four sections deal with the algebraic tools regarding exactness of  fibered complexes, local cohomology of general fibers and generic freeness of graded local cohomology, whereas the last section
\autoref{section_specialization_powers_mods} contains the applications of the main theorems so far to various events of specialization.
For the sake of visibility, we organized this section in three subsections, each about the specialization of objects of different nature, so to say.
Thus, the first piece concerns as to how the local cohomology of the specialized powers of a graded module behaves. 
To skip the technical preliminaries, we refer the reader directly to the corresponding results \autoref{thm_general_fibers_symmetric_powers} and \autoref{thm_general_fibers_powers}.
Some of these should be compared with the results of \cite{TRUNG_SPECIALIZATION}, though the techniques are different. 

The second part of \autoref{section_specialization_powers_mods} concerns the problem of  specializing rational maps, pretty much in the spirit of the recent paper  
\cite{SPECIALIZATION_RAT_MAPS}.
Namely,  one gives an encore of the fact, previously shown in loc.~cit.,  that the (topological) degree of the rational map and the degree (multiplicity) of the corresponding image remain stable under a general specialization of the coefficients involved in the given data.
Here, the outcome shapes up as a consequence of \autoref{thm_general_fibers_powers} and \cite[Corollary 2.12]{MULTPROJ}.
Given the known relations between rational maps and both the saturated special fiber rings and the $j$-multiplicities, it seems only natural to consider the latter under general specialization.

The last part is a short account of typical numerical module invariants, such as dimension, depth, $a$-invariant and regularity,  showing that they are locally constant when tensoring with a general fiber and under a general specialization. 
Taking a more geometric view one shows that, for a coherent sheaf,  the dimension of the cohomology of a general fiber is locally constant for every twisting of the sheaf, which  can be looked upon as a slight improvement on the well-known upper semi-continuity theorem.

\section{Exactness of the fibers of a complex}
\label{section_exactness}

In this section one studies how the process of taking tensor product with a fiber affects the homology of a complex.
In the main result of the section one shows that, under a nearly unrestrictive setting, an exact complex remains exact after taking tensor product with a general fiber.
This result can be seen as a vast generalization and an adaptation of \cite[Theorem 1.5, Proposition 2.7]{TRUNG_SPECIALIZATION}.

Since one is interested in certain naturally bigraded algebras -- such as the symmetric or the Rees algebra of graded modules -- and there is no significant difference between a bigraded setting or a general graded one, one will deal with the following encompassing setting.

\begin{setup}
	\label{setup_mult_grad_setting}
	Let $A$ be a ring -- always assumed to be commutative and unitary. 
	Let $R$ be the polynomial ring $R=A[x_1,\ldots,x_r]$ graded by an Abelian group $G$.
	Assume that $\deg(a)=0 \in G$ for $a \in A$ and that there is a $\ZZ$-linear map $\psi : G \rightarrow \ZZ$ such that $\psi\left(\deg(x_i)\right) > 0$ for all $1 \le i \le r$.
\end{setup}

Under the above setup, which is assumed throughout, for any finitely generated graded $R$-module $M$, the graded strands $[M]_\mu$ ($\mu\in G$) are finitely generated $A$-modules. 

The notation below will be used throughout the paper.

\begin{notation}
	For a complex of $A$-modules
	$
	P_\bullet: \, \cdots  \xrightarrow{\phi_{i+1}} P_i \xrightarrow{\phi_i}  \cdots \xrightarrow{\phi_2} P_1 \xrightarrow{\phi_1} P_0,
	$
	one sets
	$\zZZ_i\left(P_\bullet\right) := \Ker(\phi_i)$,	$\bBB_i\left(P_\bullet\right) := \IM(\phi_{i+1})$,
	$\HH_i\left(P_\bullet\right) := \zZZ_i(P_\bullet)/\bBB_i(P_\bullet)$,	and $\cCC_i\left(P_\bullet\right) := P_i/\bBB_i(P_\bullet) \,\supset\, \HH_i\left(P_\bullet\right)$
	for all $i \in \ZZ$.
\end{notation}

\begin{remark}
	\label{rem_eq_dim_in_terms_of_Cokers}
	One of the few general assertions at this point is the following:
	for a complex of $A$-modules $P_\bullet$ and an $A$-module $N$, one has a four-term exact sequence
	\begin{align*}	
	0 \rightarrow \HH_i\big(P_\bullet \otimes_A N\big) \rightarrow \cCC_i(P_\bullet) \otimes_A N \rightarrow 
	P_{i-1} \otimes_A N \rightarrow \cCC_{i-1}(P_\bullet) \otimes_A N \rightarrow 0
	\end{align*}
	of $A$-modules.
\end{remark}

\begin{lemma}\label{finiteshifts}	
	Under {\rm \autoref{setup_mult_grad_setting}}, let $F_\bullet$ be a graded complex of finitely generated free $R$-modules. 
	Then, for every integer $i$, there exists a finite set $D(i)\subseteq G$ such that for any homomorphism $\phi :A\rightarrow \kk$ from $A$ to a field $\kk$, the shifts in the minimal free graded resolution of $\HH_i (F_\bullet \otimes_A \kk)$ belong to $D(i)$. 
\end{lemma}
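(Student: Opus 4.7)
The plan is to produce, for each $i$, a finite set $D(i) \subseteq G$ containing every shift of the minimal graded free resolution of $M := \HH_i(F_\bullet \otimes_A \kk)$ over $S := R \otimes_A \kk = \kk[x_1, \ldots, x_r]$, uniformly in the homomorphism $\phi \colon A \to \kk$. These shifts are exactly the $G$-degrees $d$ such that $\Tor_j^S(M, \kk)_d \neq 0$, and by Hilbert's syzygy theorem only $j \in \{0, 1, \ldots, r\}$ matter.

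I would approach this via the double complex $D_{p, q} := (F_p \otimes_A \kk) \otimes_S K_q$, where $K_\bullet := K_\bullet(x_1, \ldots, x_r; S)$ is the Koszul complex resolving $\kk$ over $S$. The shifts of $D_{p,q}$ lie in the finite set $\Sigma_p + \Delta_q \subseteq G$, with $\Sigma_p$ the fixed shift set of $F_p$ and $\Delta_q$ the finite set of sums of $q$ distinct $\deg(x_i)$'s; both depend only on $F_\bullet$ and not on $\phi$. Two spectral sequences of this double complex converge to the same total homology. The column-filtered one collapses at $E_2$ (since each $F_p \otimes_A \kk$ is $S$-free and $K_\bullet$ resolves $\kk$) and identifies $\HH_n(\mathrm{Tot}(D))$ with $\HH_n\bigl((F_\bullet \otimes_A \kk) \otimes_S \kk\bigr)$, whose $G$-degrees are contained in $\Sigma_n$. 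The row-filtered one has $E_2^{p, q} = \Tor_q^S(\HH_p(F_\bullet \otimes_A \kk), \kk)$, with differentials $d_s \colon E_s^{p, q} \to E_s^{p-s, q+s-1}$ that preserve the $G$-grading, and it stabilizes after at most $r + 2$ pages since $K_q = 0$ for $q > r$. Each graded piece $E_\infty^{p, q}$ thus has $G$-degrees inside $\Sigma_{p+q}$, and by tracing a non-surviving $E_2^{p, q}$-class through the finitely many nonzero differentials, one sees it must be $G$-degree-related to a surviving $E_\infty$-class at some bidegree $(p', q')$ with $|(p'+q') - (p+q)| \leq r+1$. Setting
\[
D(i) := \bigcup_{q = 0}^{r}\;\bigcup_{|n - (i + q)| \le r+1} \Sigma_n
\]
yields the desired finite subset of $G$.

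The step I expect to be the main obstacle is the rigorous chase of non-surviving $E_2$-classes through the spectral sequence: Euler-characteristic arguments alone only guarantee cancellation in alternating sums across bidegrees of the same total degree, so one has to extract a pointwise bound at each bidegree $(p, q)$ by tracing the filtration carefully. A more robust alternative is to invoke effective syzygy bounds of Hermann/Bayer--Mumford type applied to the specialized matrices $\phi_j \otimes_A \kk$ —whose entries have $G$-degrees fixed independently of $\phi$— to obtain a uniform bound on the $\psi$-values of the shifts occurring in a free presentation of $\zZZ_i(F_\bullet\otimes_A\kk)/\bBB_i(F_\bullet\otimes_A\kk)$ and of its iterated syzygies; since $\psi(\deg(x_k)) > 0$ for every $k$, any subset of the subsemigroup of $G$ generated by $\bigcup_j \Sigma_j$ and $\{\deg(x_1), \ldots, \deg(x_r)\}$ with bounded $\psi$-value is automatically finite, which again produces the sought finite $D(i) \subseteq G$.
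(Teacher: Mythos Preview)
Your spectral-sequence approach has a genuine gap at exactly the step you flag as the main obstacle. The claim that a non-surviving $E_2^{p,q}$-class must be $G$-degree-related to some surviving $E_\infty$-class is false: two non-surviving classes can annihilate each other via a differential with no $E_\infty$ witness in that $G$-degree at all. Concretely, take $A=\kk$, $S=R=\kk[x,y]$ with the standard grading, and
\[
F_\bullet:\quad 0\longrightarrow R(-1)^2\xrightarrow{\ (x,\,y)\ } R\longrightarrow 0,
\]
so $\Sigma_0=\{0\}$ and $\Sigma_1=\{1\}$. Then $H_0=\kk$ and $H_1\cong R(-2)$. One has $\Tor_2^S(H_0,\kk)=\kk(-2)$, so the shift $2$ occurs in the minimal free resolution of $H_0$; yet $2\notin\Sigma_0\cup\Sigma_1$, and your proposed $D(0)=\bigcup_{q=0}^{2}\bigcup_{|n-q|\le 3}\Sigma_n=\{0,1\}$ misses it. In the row spectral sequence the class in $[E_2^{0,2}]_2$ is killed by $d_2$ mapping isomorphically onto $[E_2^{1,0}]_2=[\Tor_0(H_1,\kk)]_2=\kk$; both classes die at $E_3$, and since $H_n(F_\bullet\otimes_S\kk)$ sits in degrees $0$ and $1$ only, $[E_\infty^{p,q}]_2=0$ for every $(p,q)$. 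So the ``trace to an $E_\infty$-class'' step fails, and no amount of filtration-chasing rescues it: the spectral sequence in a fixed $G$-degree can be entirely acyclic while having nonzero $E_2$-terms.

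Your fallback via effective syzygy/regularity bounds is the correct route and is essentially what the paper does. The paper first treats the standard $\ZZ$-graded case by invoking a uniform Castelnuovo--Mumford regularity bound for $\HH_i(F_\bullet\otimes_A\kk)$ depending only on the shifts of $F_\bullet$ (from \cite{CHH}); it then reduces an arbitrary positive $\ZZ$-grading to the standard one by adjoining a homogenizing variable $x_0$, and finally handles a general $G$ by decomposing $F_{i-1},F_i,F_{i+1}$ into finitely many strands indexed by cosets of the subgroup $G'\subset G$ generated by the $\deg(x_k)$. Your observation that a $\psi$-bound on the shifts forces finiteness, because only finitely many elements of $\Sigma_i+\NN\langle\deg x_1,\dots,\deg x_r\rangle$ have $\psi$-value below a given threshold, is precisely the mechanism behind these last two reductions; what remains is to actually produce the uniform $\psi$-bound, which requires first passing to a standard grading where Hermann/Bayer--Mumford or regularity estimates are available.
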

\begin{proof}
	
	For the sake of clarity we divide the proof into three shorter steps.
	
	\smallskip
	
	{\sc Step 1.}
	First,  assume that $G=\Z$ and $\deg (x_i )=1$.
	From \cite[Lemma 2.2 (2)]{CHH} applied to the complex $F_\bullet \otimes_{A} \kk$ of finitely generated free $R \otimes_{A} \kk (\simeq \kk[x_1,\ldots,x_r])$-modules, we obtain a constant $C(i)$ which is independent of the field $\kk$ and that bounds the Castelnuovo-Mumford regularity of $\HH_i\left(F_\bullet \otimes_{A} \kk\right)$, i.e., 
	$\reg\left(\HH_i\left(F_\bullet \otimes_{A} \kk\right)\right) \le C(i)$.
	Therefore, by using the definition of Castelnuovo-Mumford regularity in terms of minimal free resolutions, we get the existence of such finite set $D(i)$ (that is independent of $\kk$).

	\smallskip

	{\sc Step 2.} Next,  assume that $G$ is generated by the elements $\deg(x_i) \in G$.
	Let $g_i:=\deg(x_i) \in G$ and $d_i:=\psi (g_i) \in \ZZ_{>0}$.
	If $f\in R$ is $G$-homogeneous, it is also $\Z$-homogeneous for the induced $\Z$-grading  $\deg_\Z (F):=\psi (\deg_G (F))$. 
	Then, since the degrees  of $\HH_i(F_\bullet\otimes_{A} \kk)$ (in the $\ZZ$-grading induced by $G$) are bounded below by the smallest shift in $F_i$ (in the $\ZZ$-grading induced by $G$), the condition $\deg_\Z(x_i)>0$ guarantees the existence of a finite set $D_\mu(i) \subset G$ (independent of $\kk$) such that 
	$$
	\Big\lbrace \nu \in G \mid \left[\Tor_{R\otimes_{A} \kk}^j(\HH_i(F_\bullet\otimes_{A} \kk),\kk)\right]_\nu \neq 0 \text{ for some } j \ge 0\Big\rbrace \cap \psi ^{-1}(\mu )\subseteq D_\mu(i)
	$$  for every $\mu\in\Z$. 

	It is then sufficient to prove the result for the $\Z$-grading induced by $G$. 

	Set $S:=R[x_0]$, where $x_0$ is a new variable. 
	Then $S$ has two gradings, one is the standard grading as a polynomial ring over $A$, the other comes as an extension of the $\Z$-grading of $R$ by setting $\deg_\ZZ(x_0)=0$.
	Now, given a $G$-homogeneous element $f=\sum_\alpha c_\alpha x^\alpha \in R$, consider the polynomial 
	$$
	f':=\sum_\alpha c_\alpha x_0^{\deg_\Z (x^\alpha )-\vert \alpha\vert}x^\alpha \in S.
	$$
	The latter is homogeneous in the above two gradings of $S$.  
	Set
	$$
	\bideg (f'):=(\deg(f'), \deg_\Z(f'))=\deg_\Z(f) \cdot (1,1)
	$$ 
	where one uses the degrees corresponding to the above two gradings. 

	Likewise, given a matrix  $M=(f_{i,j})$ of $G$-homogeneous elements, one sets  $M':=(f'_{i,j})$. 

	Now, as $\bideg (f')=\deg_\Z (f) \cdot(1,1)$ for any $G$-homogeneous element $f$, homogenizing all the maps in $F_\bullet$ in this way provides a complex $F'_\bullet$ of standard graded free $S$-modules relative to the first component of the grading,  with shifts controlled in terms of the initial ones. 
	It then follows from the standard graded case (treated in {\sc Step 1}) that the minimal bigraded free $(S \otimes_{A} \kk)$-resolution of $\HH_i(F'_\bullet \otimes_{A} \kk)$ has shifts $(a,b)$ with $a$ bounded above by an integer $K(i)$ that does not depend on the field $\kk$. 
	Specializing $x_0$ to $1$ provides a (possibly non minimal) $\deg_\Z$-graded free $(R\otimes_{A} \kk)$-resolution of $\HH_i(F_\bullet \otimes_{A} \kk)$ (see, e.g., \cite[proof of Corollary 19.8]{EISEN_COMM}). 
	But for any monomial $\deg_\Z (x^\alpha )\leq \max_j\{ d_j\} \deg (x^\alpha )$. 
	Hence, all shifts in the minimal free  $(R \otimes_{A} \kk)$-resolution of $\HH_i(F_\bullet \otimes_{A} \kk)$ are bounded above by $K(i)\max_j\{ d_j\}$, and the claim follows, since the initial degree of  $\HH_i(F_\bullet \otimes_{A} \kk )$ is bounded below by the smallest shift in $F_i$. 

\smallskip

	{\sc Step 3.}
	Finally, let $G'$ be the subgroup of $G$ generated by the $g_i$'s. 
	If $h_1,\ldots h_s$ are representatives of the different classes modulo $G'$ of the shifts appearing in $F_{i-1},F_i$ and $F_{i+1}$, the $(R\otimes_{A} \kk)$-module $\HH_i(F_\bullet \otimes_{A} \kk)$ is the direct sum of the homology of the strands corresponding to summands whose shifts belong to these classes. 
	Again, each of these gives rise, by the above proof, to only finitely many options for the shifts in the minimal free resolution of the corresponding strand of $\HH_i(F_\bullet \otimes_{A} \kk)$.
\end{proof}

The gist of \autoref{finiteshifts} is the ability of reducing the vanishing of the fiber homology of a free graded complex of $R$-modules to a finite number of degrees. 
This will be transparent in the following result.

Recall the usual notation by which, for any $\nnn \in \Spec(A)$, $k(\nnn)$ denotes the residue field 
$$
k(\nnn):=A_\nnn/\nnn A_\nnn = \Quot(A/\nnn).
$$

\begin{lemma}\label{TM}
	Under {\rm \autoref{setup_mult_grad_setting}},
	let $F_\bullet$ be a graded complex of finitely generated free $R$-modules.\smallskip
	
	\begin{enumerate}[\rm (i)]
		\item For every $i$, there exists a finite set of degrees $D(i)$ such that, for any prime ideal $\ip \in \spec (A)$, the following are equivalent:
		\begin{enumerate}[\rm (a)]
			\item  $\HH_i (\left[F_\bullet\right]_\mu \otimes_A k(\ip )) =0$, for every $\mu\in D(i)$, 
			
			\item  $\HH_i (F_\bullet \otimes_A k(\ip ))=0$.
		\end{enumerate}
	
		\item For every $i$, the set $\big\lbrace \ip \in \spec (A) \mid \HH_i (F_\bullet \otimes_A k(\ip )) = 0 \big\rbrace$ is open in $\spec (A)$.
		\item  Assume that $A$ is locally Noetherian, $F_i=0$ for $i<0$ and $\HH_i(F_\bullet )=0$ for $i>0$. Set $M :=\HH_0 (F_\bullet )$. 
		Then, the set $\{\ip\in \spec (A)\mid [M]_\mu  \otimes_A A_\ip \mbox{is $A_\ip$-free for all $\mu \in G$}\}$ is open in $\spec (A)$.
	\end{enumerate}
\end{lemma}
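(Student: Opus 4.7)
I would prove (i), (ii) and (iii) in turn, each building on the previous. For (i), take $D(i)\subseteq G$ from \autoref{finiteshifts}. Since the functor $[-]_\mu$ is exact on graded modules and commutes with $-\otimes_A k(\ip)$, one has $\HH_i\bigl([F_\bullet]_\mu\otimes_A k(\ip)\bigr)=\bigl[\HH_i(F_\bullet\otimes_A k(\ip))\bigr]_\mu$. The $\ZZ$-grading induced by $\psi$ is positive on the variables, so the graded Nakayama lemma applies to finitely generated graded $(R\otimes_A k(\ip))$-modules: such a module vanishes iff it has no minimal generators. Since \autoref{finiteshifts} guarantees that the degrees of any minimal generators of $\HH_i(F_\bullet\otimes_A k(\ip))$ lie in $D(i)$, the module vanishes iff its $\mu$-strand vanishes for every $\mu\in D(i)$, which is the claimed equivalence of (a) and (b).

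For (ii), by (i) it suffices to show that, for each fixed $\mu\in D(i)$, the vanishing locus of $\HH_i\bigl([F_\bullet]_\mu\otimes_A k(\ip)\bigr)$ is open in $\Spec(A)$, and then to intersect the finitely many resulting opens. Because $\psi(\deg(x_k))>0$, each strand $[R]_\nu$ is a finitely generated free $A$-module (with basis the finitely many monomials of that degree), hence $[F_\bullet]_\mu$ is a complex of finitely generated free $A$-modules whose differentials $\phi_j$ are encoded by matrices $M_j$ with entries in $A$. Writing $n_j=\rank[F_j]_\mu$ and $\rho_j(\ip)=\rank(M_j\otimes_A k(\ip))$, a standard computation gives
\[
\dim_{k(\ip)}\HH_i\bigl([F_\bullet]_\mu\otimes_A k(\ip)\bigr)=n_i-\rho_i(\ip)-\rho_{i+1}(\ip).
\]
Each $\rho_j$ is lower semi-continuous on $\Spec(A)$ via the usual Fitting-ideal description $\{\ip:\rho_j(\ip)\ge r\}=\Spec(A)\setminus V(I_r(M_j))$, while $\rho_i+\rho_{i+1}\le n_i$ holds globally because $\phi_i\phi_{i+1}=0$. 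Consequently the vanishing locus is the open set $\{\rho_i+\rho_{i+1}\ge n_i\}$.

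For (iii), the exactness of $[-]_\mu$ together with the hypothesis $\HH_j(F_\bullet)=0$ for $j>0$ makes $[F_\bullet]_\mu$ a free $A$-resolution of the finitely generated $A$-module $[M]_\mu$. Under the local Noetherian hypothesis, the classical criterion says that $[M]_\mu\otimes_A A_\ip$ is $A_\ip$-free iff $\Tor_1^A([M]_\mu,k(\ip))=\HH_1\bigl([F_\bullet]_\mu\otimes_A k(\ip)\bigr)$ vanishes. Combined with (i), the condition ``$[M]_\mu\otimes_A A_\ip$ is $A_\ip$-free for every $\mu\in G$'' is therefore equivalent to the single condition $\HH_1(F_\bullet\otimes_A k(\ip))=0$, whose locus is open by (ii) applied to $i=1$. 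The step requiring the most care will be (i), namely converting the uniform control of shifts coming from \autoref{finiteshifts} into uniform control of the degrees in which the homology can possibly be nonzero; once this is in place, (ii) becomes standard Fitting-ideal semi-continuity, and (iii) reduces to the Tor criterion for freeness.
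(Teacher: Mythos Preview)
Your proof is correct and follows essentially the same route as the paper: (i) via \autoref{finiteshifts} and graded Nakayama, (ii) via the rank condition on the matrices of the strand complex and Fitting ideals, and (iii) via the local criterion for flatness reducing freeness of every strand to the vanishing of $\HH_1(F_\bullet\otimes_A k(\ip))$. Your write-up is in fact a bit more explicit than the paper's in spelling out why $\HH_i([F_\bullet]_\mu\otimes_A k(\ip))=[\HH_i(F_\bullet\otimes_A k(\ip))]_\mu$ and why each $[F_j]_\mu$ is a finitely generated free $A$-module, but the underlying argument is the same.
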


\begin{proof}
	
	(i) It is clear that (b) implies (a). 
	For the converse, it follows from \autoref{finiteshifts} that $\left[\HH_i ((F_\bullet) \otimes_A k(\ip ))\right]_\mu$ is generated by elements whose $G$-degree belong to a finite set $D(i)\subseteq G$. 
	In particular, if $\left[\HH_i ((F_\bullet) \otimes_A k(\ip ))\right]_\mu$ vanishes for $\mu\in D(i)$, it follows that $\HH_i (F_\bullet \otimes_A k(\ip ))=0$. 
	
	(ii) For a given $\mu$,  $\left[\HH_i ((F_\bullet) \otimes_A k(\ip ))\right]_\mu \not= 0$ is equivalent to the condition $\rank [(d_{i+1}\otimes _A k(\ip ))]_\mu +\rank [(d_{i}\otimes _A k(\ip ))]_\mu <\rank [F_i]_\mu$, a closed condition in terms of ideals of minors of matrices representing these graded pieces of the differentials $d_{i+1}$ and $d_{i}$ of $F_\bullet$.
	So, the result follows from part (i).
	
	(iii) By the local criterion for flatness (see, e.g., \cite[Theorem 6.8]{EISEN_COMM}), for any $\mu$, the following are equivalent:
	
	(a)$_\mu$  $\Tor_1^{A_\ip}([M]_\mu  \otimes_A A_\ip , k(\ip ))=[\HH_1 (F_\bullet \otimes_A k(\ip ))]_\mu = 0$,
	
	(b)$_\mu$\; $[M]_\mu   \otimes_A A_\ip$ is $A_\ip$-flat,
	
	(c)$_\mu$\; $[M]_\mu  \otimes_A A_\ip$ is $A_\ip$-free,\\
	where the last three conditions coincide since $[M]_\mu  \otimes_A A_\ip$ is a finitely presented $A_\ip$-module for any $\mu$. 
	So, the conclusion follows from part (ii).
\end{proof}

\begin{notation}
	\label{not_TM}
	 For any finitely generated graded $R$-module $M$, one denotes by $T_M$ the complement in $\Spec(A)$  of the open set 
	 $$U_M := \{\ip\in \spec (A)\mid [M]_\mu  \otimes_A A_\ip \mbox{is $A_\ip$-free for all $\mu \in G$}\}
	 $$
	 introduced in \autoref{TM}(iii).
\end{notation}

\begin{remark}
\label{rem_dense_reduced}
	 When $A$ is Noetherian and $M$ is a finitely generated graded $R$-module, $T_M$ is a closed subset of $\Spec(A)$ by \autoref{TM}(iii).
Furthermore, if $\ip$ is a minimal prime of $A$ such that $A_\ip$ is a field, then $\ip \not\in T_M$.
	In particular, if $A$ is generically reduced then $U_M$ is dense in $\Spec(A)$. 
	This is in particular the case when $A$ is reduced -- a frequent assumption in this paper.	
\end{remark}

\begin{lemma}\label{flatexchange}
	Let $P_\bullet$ be a complex of $A$-modules and let $s\geq 0$ denote an integer. Assume that:
	\begin{enumerate}
			\item[{\rm (a)}]  $P_i$ is  $A$-flat for every $0\leq i\leq s$.
			\item[{\rm (b)}]  $\HH_i(P_\bullet )$ is  $A$-flat for every $0\leq i\leq s$.
	\end{enumerate}
	Then, for any $A$-module $N$ and any $0 \le i \le s$, one has that 
	$$
	\HH_i(P_\bullet \otimes_A N)\simeq \HH_i(P_\bullet )\otimes_A N.
	$$
\end{lemma}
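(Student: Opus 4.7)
The approach is to show that every module of cycles $\zZZ_i(P_\bullet)$ and boundaries $\bBB_i(P_\bullet)$ is $A$-flat for $0 \le i \le s$, and then to use this flatness to commute formation of homology with $-\otimes_A N$. The key elementary observation is: in a short exact sequence $0 \to A' \to A \to A'' \to 0$ of $A$-modules, if both $A$ and $A''$ are $A$-flat, then so is $A'$ (immediate from the long exact $\Tor$-sequence, as $\Tor_{k+1}(A'',N)$ and $\Tor_k(A,N)$ sandwich $\Tor_k(A',N)$).

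First I would establish the flatness claim by induction on $i$. For $i=0$, one has $\zZZ_0(P_\bullet)=P_0$, which is flat, and the short exact sequence
\begin{equation*}
0 \longrightarrow \bBB_0(P_\bullet) \longrightarrow P_0 \longrightarrow \HH_0(P_\bullet) \longrightarrow 0,
\end{equation*}
together with the flatness of $P_0$ and $\HH_0(P_\bullet)$, yields the flatness of $\bBB_0(P_\bullet)$. For the inductive step $1 \le i \le s$, assuming $\bBB_{i-1}(P_\bullet)$ flat, the sequence $0 \to \zZZ_i(P_\bullet) \to P_i \to \bBB_{i-1}(P_\bullet) \to 0$ together with the flatness of $P_i$ gives the flatness of $\zZZ_i(P_\bullet)$; then the sequence $0 \to \bBB_i(P_\bullet) \to \zZZ_i(P_\bullet) \to \HH_i(P_\bullet) \to 0$ combined with the flatness of $\HH_i(P_\bullet)$ gives that of $\bBB_i(P_\bullet)$.

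With flatness in hand, I would tensor both short exact sequences above with $N$; the flatness of the right-hand terms guarantees that the tensored sequences remain exact. In particular, $\bBB_{i-1}(P_\bullet)\otimes_A N$ injects into $P_{i-1}\otimes_A N$ for $1 \le i \le s$, which identifies $\Ker(\phi_i \otimes 1)$ with $\zZZ_i(P_\bullet)\otimes_A N$; and $\bBB_i(P_\bullet)\otimes_A N$ injects into $P_i\otimes_A N$, making the image of $\phi_{i+1}\otimes 1$ equal to $\bBB_i(P_\bullet)\otimes_A N$. Quotienting and invoking the flatness of $\bBB_i(P_\bullet)$ once more then yields
\begin{equation*}
\HH_i(P_\bullet \otimes_A N) \;\simeq\; \bigl(\zZZ_i(P_\bullet)/\bBB_i(P_\bullet)\bigr)\otimes_A N \;\simeq\; \HH_i(P_\bullet)\otimes_A N,
\end{equation*}
as required.

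The argument is purely routine homological algebra and no step is genuinely hard; the only delicate point is the bookkeeping of the induction. One must alternate between $\zZZ_i(P_\bullet)$ and $\bBB_i(P_\bullet)$ in the correct order, and must take care not to invoke flatness of $P_j$ or $\HH_j(P_\bullet)$ for $j>s$, since no such hypothesis is granted. In particular, the boundary case $i=s$ still works because the flatness of $\bBB_s(P_\bullet)$ is deduced from that of $\zZZ_s(P_\bullet)$ and $\HH_s(P_\bullet)$ alone, without appealing to anything about $P_{s+1}$ or $\HH_{s+1}(P_\bullet)$.
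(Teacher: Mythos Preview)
Your argument is correct. You proceed by an elementary induction, proving that the cycle and boundary submodules $\zZZ_i(P_\bullet)$ and $\bBB_i(P_\bullet)$ are $A$-flat for $0\le i\le s$, and then using this flatness to keep the relevant short exact sequences exact after tensoring with $N$. The one place where your write-up is slightly compressed is the claim that $\bBB_{i-1}(P_\bullet)\otimes_A N$ injects into $P_{i-1}\otimes_A N$: this does not follow from a single one of your two sequences, but rather from composing the injection $\bBB_{i-1}\otimes_A N\hookrightarrow \zZZ_{i-1}\otimes_A N$ (from the sequence $0\to\bBB_{i-1}\to\zZZ_{i-1}\to\HH_{i-1}\to 0$) with $\zZZ_{i-1}\otimes_A N\hookrightarrow P_{i-1}\otimes_A N$ (from $0\to\zZZ_{i-1}\to P_{i-1}\to\bBB_{i-2}\to 0$, using flatness of $\bBB_{i-2}$). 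This is implicit in what you wrote but worth making explicit.

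The paper takes a different, more compressed route: it chooses a free $A$-resolution $F_\bullet$ of $N$, forms the double complex $P_p\otimes_A F_q$, and compares the two associated spectral sequences. One of them has $E_2$-page $\HH_p(\Tor_q^A(P_\bullet,N))$, which by hypothesis~(a) collapses to $\HH_p(P_\bullet\otimes_A N)$ in the range $p\le s$; the other has $E_2$-page $\Tor_q^A(\HH_p(P_\bullet),N)$, which by hypothesis~(b) collapses to $\HH_p(P_\bullet)\otimes_A N$ in the same range. Comparing the two abutments gives the result in one stroke. Your approach is more hands-on and avoids spectral sequences entirely, at the cost of some bookkeeping; the paper's approach is shorter and explains conceptually why the two flatness hypotheses are exactly what is needed, but requires comfort with double-complex machinery.
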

\begin{proof}
	 Let $F_\bullet$ be a free $A$-resolution of $N$. The two spectral sequences associated to the double complex with components $P_p\otimes_A F_q$ have respective second terms $\Tor_q^A (\HH_p(P_\bullet ),N)$ and $\HH_p(\Tor_q^A (P_\bullet , N))$. 
	 As $\Tor_q^A(P_\bullet ,N)=0$ for $q>0$ by (a) and $\Tor_q^A (\HH_p(P_\bullet ),N)$ for $q>0$ by (b), the statement follows.
\end{proof}

For a graded $R$-module  $M$, denote for brevity
$$
\left(M\right)^{*_A}={}^*\Hom_A(M, A): = \bigoplus_{\nu \in G} \Hom_A\left({\left[M\right]}_{-\nu}, A\right).
$$ 
Note that $\left(M\right)^{*_A}$ has a natural structure of graded $R$-module.

\begin{lemma}
	\label{lem_graded_dual_cocomp}
	Let $P^\bullet$ be a co-complex of finitely generated graded $R$-modules. Assume that:
	\begin{enumerate}[\rm (a)]
		\item  $P^i$ is $A$-flat for all $i \ge 0$.
		\item  $\HH^i(P^\bullet )$ is  $A$-flat for all $i \ge 0$.
		\item $A$ is Noetherian.
	\end{enumerate}
	Then, for all $i \ge 0$, one has that
	$$
	\HH_i\big(\left(P^\bullet\right)^{*_A} \big)\simeq \left(\HH^i(P^\bullet)\right)^{*_A}.
	$$	
\end{lemma}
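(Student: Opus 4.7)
The plan is to reduce to a graded-strand statement of the universal coefficient type and then mimic the spectral sequence argument of \autoref{flatexchange}. Fix $\nu \in G$ and set $C^\bullet := [P^\bullet]_\nu$, a cochain complex of $A$-modules whose $p$-th cohomology is $[\HH^p(P^\bullet)]_\nu$. By \autoref{setup_mult_grad_setting} applied to the finitely generated graded $R$-modules $P^p$ and $\HH^p(P^\bullet)$, each $C^p$ and each $\HH^p(C^\bullet)$ is finitely generated over $A$; assumptions (a) and (b) make these strands $A$-flat as well, and since $A$ is Noetherian, finitely generated flat modules are projective. Observing that the $(-\nu)$-th strand of $(P^\bullet)^{*_A}$ is precisely the chain complex $\Hom_A(C^\bullet, A)$, the lemma reduces to the strand-wise assertion
\[
\HH_p\bigl(\Hom_A(C^\bullet, A)\bigr) \;\cong\; \Hom_A\bigl(\HH^p(C^\bullet), A\bigr).
\]

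For the strand statement, I would take an injective coresolution $A \hookrightarrow I^\bullet$ of $A$ as an $A$-module and consider the double complex $K^{p,q} := \Hom_A(C^p, I^q)$. Its two associated spectral sequences converge to $\HH^\bullet(R\Hom_A(C^\bullet, A))$. Taking vertical cohomology first, $\HH_v^q(K^{p,\bullet}) = \Ext_A^q(C^p, A) = 0$ for $q > 0$ because $C^p$ is projective, so this spectral sequence collapses onto the $q=0$ row and has abutment $\HH_\bullet(\Hom_A(C^\bullet, A))$. Taking horizontal cohomology first, the injectivity of $I^q$ makes $\Hom_A(-, I^q)$ exact, giving $\HH_h^p(K^{\bullet, q}) = \Hom_A(\HH^p(C^\bullet), I^q)$; the subsequent vertical cohomology $\Ext_A^q(\HH^p(C^\bullet), A)$ also vanishes for $q>0$ because $\HH^p(C^\bullet)$ is projective, so this spectral sequence collapses to $\Hom_A(\HH^p(C^\bullet), A)$. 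Comparing the two collapsed abutments yields the desired strand isomorphism, and reassembling over $\nu$ gives the claimed graded isomorphism.

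The main obstacle is the convergence of the two spectral sequences: it requires a boundedness hypothesis on $C^\bullet$ (equivalently on $P^\bullet$), which is harmless in the intended applications, where $P^\bullet$ is a \v Cech-type complex computing local cohomology and hence is bounded. If one prefers to avoid spectral sequences entirely in the bounded case, the same conclusion follows by an elementary splitting argument: by downward induction on $p$ starting from the vanishing top end of $C^\bullet$, combining the short exact sequences $0 \to Z^p \to C^p \to B^{p+1} \to 0$ and $0 \to B^p \to Z^p \to \HH^p(C^\bullet) \to 0$ with the Tor long exact sequence shows that every cocycle $Z^p$ and every coboundary $B^p$ is $A$-flat, hence (being finitely generated over Noetherian $A$) projective. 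All these short exact sequences then split, yielding $C^p \cong B^p \oplus \HH^p(C^\bullet) \oplus B^{p+1}$, after which the $p$-th homology of $\Hom_A(C^\bullet, A)$ is visibly $\Hom_A(\HH^p(C^\bullet), A)$.
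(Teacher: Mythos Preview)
Your spectral-sequence argument is essentially the same as the paper's: both take an injective resolution $I^\bullet$ of $A$, form the double complex $\Hom_A([P^p]_\nu,I^q)$, and collapse the two spectral sequences using the vanishing of $\Ext_A^{>0}$ on the (finitely generated, flat, hence projective) strands of $P^p$ and $\HH^p(P^\bullet)$; the only cosmetic difference is that you fix a strand $\nu$ first while the paper carries the direct sum $\bigoplus_\mu$ throughout. Your remark about convergence is well taken and applies equally to the paper's proof---in the actual applications (\autoref{thm_relative_graded_local_duality}) the cocomplex is $\Hom_R(F_\bullet^{\le r+1},R(-\delta))$, hence bounded---and your alternative splitting argument via downward induction is a clean elementary substitute that the paper does not give.
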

\begin{proof}
	First notice that, since $\left[P^p\right]_{\mu}$ is finitely generated over $A$ for any $\mu$ and $A$ is Noetherian, the modules $P^p$, $(P^p)^{*_A}$, $\HH^p(P^\bullet)$ and $\left(\HH^p(P^\bullet)\right)^{*_A}$ are direct sums of finitely presented $A$-modules. 
	Hence, each one of these is $A$-flat if and only if is $A$-projective.
	
	Let $I^\bullet$ be an injective $A$-resolution of $A$. The two spectral sequences associated to the second quadrant double complex $\mathbb{G}^{\bullet,\bullet}$ with components $\mathbb{G}^{-p,q} = \bigoplus_{\mu \in G} \Hom_A ([P^p]_{-\mu },I^q)$ have respective second terms 
	$$
	{}^\text{II}E_2^{p,-q} = \bigoplus_{\mu \in G} \Ext^p_A \big([\HH^q(P^\bullet)]_{-\mu},A\big) \quad\text{ and }\quad {}^\text{I}E_2^{-p,q} = \bigoplus_{\mu \in G} \HH^{-p}\big(\Ext^q_A ([P^\bullet]_{-\mu}, A)\big).
	$$  
	From (a) and (b) we obtain that $\Ext_A^q([P^\bullet]_{-\mu}, A) = 0$ for $q > 0$ and $\Ext_A^p([\HH^q(P^\bullet)]_{-\mu}, A) = 0$ for $p > 0$, respectively. 
	Therefore
	$$
	\HH_p\big(\left(P^\bullet\right)^{*_A} \big) = \HH^{-p}\big(\left(P^\bullet\right)^{*_A} \big) \simeq \left(\HH^p(P^\bullet)\right)^{*_A},\; \forall p\geq 0,
	$$
	and so the result follows.
\end{proof}

The following local version of the classical generic freeness lemma will be used over and over.

\begin{corollary}
	\label{cor_generic_projective}
Under {\rm \autoref{setup_mult_grad_setting}}, let $A$ be a reduced  Noetherian ring and let $M$ be a finitely generated graded $R$-module.
Then, there exists an element $a \in A$ avoiding the minimal primes of $A$ such that $M_a$ is a projective $A_a$-module.
\end{corollary}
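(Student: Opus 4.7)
The strategy is to combine the openness/denseness of $U_M$ from \autoref{TM}(iii) and \autoref{rem_dense_reduced} with prime avoidance to produce the desired element $a$, and then to upgrade pointwise freeness of each graded strand to projectivity of $M_a$ as an $A_a$-module.

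First I would observe that, since $A$ is Noetherian, \autoref{TM}(iii) implies that
$U_M=\{\ip\in\Spec(A)\mid [M]_\mu\otimes_A A_\ip \text{ is } A_\ip\text{-free for every }\mu\in G\}$
is open in $\Spec(A)$, so its complement $T_M$ is closed. Because $A$ is reduced, every minimal prime $\ip$ of $A$ satisfies $A_\ip=k(\ip)$, hence $\ip\in U_M$ by \autoref{rem_dense_reduced}; in particular $U_M$ contains all minimal primes of $A$.

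Next I would write $T_M=V(I)$ for an ideal $I\subseteq A$. Since $U_M$ meets every irreducible component of $\Spec(A)$, the ideal $I$ is not contained in any minimal prime of $A$. As $A$ is Noetherian there are only finitely many minimal primes $\ip_1,\dots,\ip_t$, and prime avoidance yields an element $a\in I$ such that $a\notin\ip_i$ for all $i$. By construction $D(a)=\Spec(A_a)\subseteq U_M$, so for every prime $\qqq\subset A_a$ with contraction $\ip\subset A$, the module $[M]_\mu\otimes_A A_\ip$ is $A_\ip$-free for all $\mu\in G$.

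Finally I would upgrade this to the desired projectivity. For each fixed $\mu\in G$, the $A$-module $[M]_\mu$ is finitely generated, so $[M]_\mu\otimes_A A_a$ is a finitely presented $A_a$-module (as $A_a$ is Noetherian). Being locally free at every prime of $A_a$ by the preceding paragraph, a finitely presented locally free module is projective, so $[M]_\mu\otimes_A A_a$ is $A_a$-projective. Then
\[
M_a \;=\; \bigoplus_{\mu\in G}\,[M]_\mu\otimes_A A_a
\]
is a direct sum of projective $A_a$-modules, and a direct sum of summands of free modules is itself a summand of a free module; hence $M_a$ is projective over $A_a$, as required.

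The only step that requires any care is the passage from ``each graded strand is locally free'' to ``$M_a$ is projective as an $A_a$-module,'' since $M$ is typically not finitely generated over $A$ and one must exhibit a single $a$ that works simultaneously for all (possibly infinitely many) graded pieces. Both issues are handled uniformly: the common $a$ is produced in one stroke by prime avoidance applied to the defining ideal of $T_M$, and the infinite direct sum of projectives is still projective.
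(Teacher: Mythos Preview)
Your proof is correct and follows essentially the same route as the paper: use \autoref{TM}(iii) together with \autoref{rem_dense_reduced} to see that $T_M$ is closed and misses all minimal primes, then apply prime avoidance to the defining ideal of $T_M$ to obtain $a$ with $D(a)\subseteq U_M$. The paper's proof stops there, leaving implicit the final upgrade from ``each $[M]_\mu\otimes_A A_a$ is locally free and finitely presented'' to ``$M_a$ is projective over $A_a$''; you have spelled this out correctly.
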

	\begin{proof}
		From \autoref{TM}(iii) and the prime avoidance lemma one can find $a \in A$ avoiding the minimal primes of $A$ such that $D(a) \subset \Spec(A) \setminus T_M$.
	\end{proof}

In the sequel, given $\pp \in \Spec(A)$, an $R$-module $M$ and an $A_\pp$-module $N$, the $(R\otimes_{A} A_\pp)$-module $M_{\pp}\otimes_{A_\ip} N=\left(M \otimes_A A_\pp \right)\otimes_{A_\ip} N$ will as usual be denoted by $M \otimes_{A_\ip} N$.
By the same token, given $a \in A$, $M \otimes_{A_a} N$ will denote $M_a \otimes_{A_a} N$.

Next is the main result of the section. 
For a complex of finitely generated graded $R$-modules, the following theorem gives an explicit closed subset of  $\Spec(A)$ outside which  homology commutes with tensor product.

\begin{theorem}
	\label{thm_general_specialization_complexes}
	Under {\rm \autoref{setup_mult_grad_setting}}, let $A$ be a Noetherian ring and	let $P_\bullet$ be a complex of finitely generated graded $R$-modules with $P_i=0$ for $i<0$. 
	 Given an integer $s\geq 0$ and a prime  $\ip\in\Spec(A)\setminus \bigcup_{i=0}^{s}\left(T_{P_i}\cup T_{\HH_i(P_\bullet )}\right)$, then 
	$$
	\HH_i (P_\bullet \otimes_{A_\ip }N) \simeq  \HH_i (P_\bullet )\otimes_{A_\ip }N,
	$$
	for any $A_\ip$-module $N$ and for every $0\leq i\leq s$.	
	\begin{proof}
		Let $P_{\bullet\bullet}$ be a Cartan--Eilenberg graded free $R$-resolution of $P_\bullet$ with finitely generated summands. 
		The totalization $T_\bullet$ of $P_{\bullet\bullet}$ is a complex of finitely generated graded free $R$-modules with  $\HH_i (T_\bullet ) = \HH_i (P_\bullet )$ for all $i$. 
		On the one hand,  
		$$
		\HH_i (T_\bullet \otimes_{A_\ip }N) \simeq \HH_i (T_\bullet )\otimes_{A_\ip }N = \HH_i (P_\bullet )\otimes_{A_\ip }N, \ \forall \ 0\leq i\leq s,
		$$
		by \autoref{TM}(iii) and \autoref{flatexchange} since $\ip\not\in \bigcup_{i=0}^{s}T_{\HH_i(T_\bullet )}=\bigcup_{i=0}^{s}T_{\HH_i(P_\bullet )}$. 
		
		On the other hand, the spectral sequence from $P_{\bullet\bullet}\otimes_{A_\ip} N$ with first term $E^1_{p,q}=\Tor_q^{A_\ip}(P_p  \otimes_A A_\ip , N)$, shows that 
		$$
		\HH_i (T_\bullet \otimes_{A_\ip }N) \simeq \HH_i (P_\bullet \otimes_{A_\ip }N), \ \forall \ 0\leq i\leq s,
		$$
		by \autoref{TM}(iii) since $\ip\not\in \cup_{i=0}^{s}T_{P_i}$. 		
	\end{proof}
\end{theorem}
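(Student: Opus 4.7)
The natural strategy is to lift $P_\bullet$ to a double complex of finitely generated graded free $R$-modules and then play its two spectral sequences against each other. The hypotheses on $\ip$ translate, via \autoref{TM}(iii), into flatness statements over $A_\ip$ that make one spectral sequence collapse while the other yields a flat-base-change identity for the homology.

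First I would pick a Cartan--Eilenberg resolution $P_{\bullet\bullet}\to P_\bullet$ by finitely generated graded free $R$-modules and set $T_\bullet:=\mathrm{Tot}(P_{\bullet\bullet})$, so that $\HH_i(T_\bullet)=\HH_i(P_\bullet)$ for all $i$ and each $T_i$ is $A$-free (a finite sum of graded shifts of $R=A[x_1,\ldots,x_r]$). Then I would exploit the homology hypothesis: since $\ip\notin T_{\HH_i(P_\bullet)}$ for $0\le i\le s$, \autoref{TM}(iii) gives that each graded strand of $\HH_i(P_\bullet)\otimes_A A_\ip$ is $A_\ip$-free, and a direct sum of flats is flat, so the whole module $\HH_i(T_\bullet)\otimes_A A_\ip$ is $A_\ip$-flat. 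Applying \autoref{flatexchange} to the complex $T_\bullet\otimes_A A_\ip$ (whose terms are $A_\ip$-flat) with the $A_\ip$-module $N$ then yields, for $0\le i\le s$,
\[
\HH_i(T_\bullet\otimes_{A_\ip}N)\simeq\HH_i(T_\bullet)\otimes_{A_\ip}N=\HH_i(P_\bullet)\otimes_{A_\ip}N.
\]

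Next I would bring in the column-filtration spectral sequence of $P_{\bullet\bullet}\otimes_{A_\ip}N$. Because each column $P_{p,\bullet}$ is a free $R$-resolution of $P_p$ whose terms are $A$-flat (in fact $A$-free), its first page is
\[
E^1_{p,q}=\Tor_q^{A_\ip}\!\bigl(P_p\otimes_A A_\ip,\;N\bigr).
\]
The hypothesis $\ip\notin T_{P_p}$ for $0\le p\le s$, together with the same direct-sum-of-flats observation, forces $P_p\otimes_A A_\ip$ to be $A_\ip$-flat, and hence $E^1_{p,q}=0$ for $q>0$ and $p\le s$. In the range $i\le s$ the spectral sequence therefore collapses to the bottom row, which is the complex $P_\bullet\otimes_{A_\ip}N$, producing $\HH_i(T_\bullet\otimes_{A_\ip}N)\simeq\HH_i(P_\bullet\otimes_{A_\ip}N)$. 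Chaining this with the previous isomorphism finishes the argument.

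The main obstacle I anticipate is bookkeeping rather than conceptual: since flatness is only assumed in the range $i\le s$, one must check that both comparison steps genuinely localize to that range. For the vertical spectral sequence this is immediate because the abutment on diagonal $i\le s$ only involves columns $p\le i\le s$; for the application of \autoref{flatexchange} one uses the parameterized form, which truncates hypotheses at $s$. The one small subtlety to be explicit about is the step from ``every graded strand is $A_\ip$-free'' (the definition of $T_M$) to ``the whole ungraded module is $A_\ip$-flat'' (what \autoref{flatexchange} requires); this is the essentially trivial fact that arbitrary direct sums preserve flatness.
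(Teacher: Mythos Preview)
Your proposal is correct and follows essentially the same route as the paper: a Cartan--Eilenberg resolution, its totalization $T_\bullet$, \autoref{flatexchange} for the identification $\HH_i(T_\bullet\otimes_{A_\ip}N)\simeq\HH_i(P_\bullet)\otimes_{A_\ip}N$, and the column spectral sequence with $E^1_{p,q}=\Tor_q^{A_\ip}(P_p\otimes_A A_\ip,N)$ for the identification $\HH_i(T_\bullet\otimes_{A_\ip}N)\simeq\HH_i(P_\bullet\otimes_{A_\ip}N)$. Your explicit remarks about restricting to the range $i\le s$ and passing from stratum-wise freeness to global $A_\ip$-flatness are the only additions, and they are welcome clarifications rather than a different approach.
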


\section{Generic freeness of graded local cohomology modules}
\label{section_gen_freeness_local_cohom}

In this section one is concerned with the generic freeness of graded local cohomology modules.
Here one extends the results of \cite{KSMITH} and \cite[Section 3]{HOCHSTER_ROBERTS} to a graded environment,  adding a few generalizations.

The following setup will hold throughout the section.

\begin{setup}
	\label{setup_generic_freeness_local_cohom}
	 Keep the notation introduced in \autoref{setup_mult_grad_setting}, so that $R=A[x_1,\ldots,x_r]$ is a $G$-graded polynomial ring. Assume in addition that $A$ is Noetherian and set  $\mm=(x_1,\ldots,x_r) \subset R$ and $\delta=\deg(x_1)+\cdots+\deg(x_r) \in G$.
	 Recall that $\HL^r(R)\simeq \frac{1}{x_1\cdots x_r}A[x_1^{-1},\ldots,x_r^{-1}]$.
\end{setup}

\begin{remark}
	\label{rem_finite_gen_grad_parts_local_cohom}
	Let $M$ be a finitely generated graded $R$-module.
	Since one is assuming that $\psi(\deg(x_i))>0$, it follows that ${\left[\HL^i(M)\right]}_\nu$ is a finitely generated $A$-module for all $i \ge 0$ and $\nu \in G$ (see \cite[Theorem 2.1]{CHARDIN_POWERS_IDEALS}).
\end{remark}

Consider the canonical perfect pairing of free $A$-modules in ``top'' cohomology
$$
{\left[R\right]}_\nu \otimes_{A} {\left[\HL^r(R)\right]}_{-\delta-\nu} \rightarrow {\left[\HL^r(R)\right]}_{-\delta} \simeq A
$$
 inducing a canonical graded $R$-isomorphism 
$
	\HL^r(R) \simeq {\left(R(-\delta)\right)}^{*_A} = \grHom_A\left(R(-\delta), A\right).	
$

The functor ${\left(\bullet\right)}^{*_A}$ has been introduced in the previous section.
It can be regarded as a relative version (with respect to $A$) of the graded Matlis dual. 

\begin{lemma}
	\label{lem_props_Maltis_relative}
	 Let 
		$
		F_\bullet
		$
		be a complex of finitely generated graded free $R$-modules.
		Then, one has the isomorphism of complexes 
		$
		\HL^r(F_\bullet) \simeq {\Big(\Hom_R(F_\bullet
			, R(-\delta))\Big)}^{*_A}.
		$
	\begin{proof}
This is well-known (see, e.g., \cite[Section 2.15]{JOUANOLOU_IDEAUX_RES},  \cite[Corollary 1.4]{DUALITY_TAMENESS}).
	\end{proof}
\end{lemma}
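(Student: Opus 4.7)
The strategy is to prove the isomorphism termwise on each $F_i$ and then verify that the termwise isomorphisms are natural in $F_i$, so that they assemble into an isomorphism of complexes.

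For a single finitely generated graded free module $F = \bigoplus_k R(-a_k)$, I would first use flatness of $F$ over $R$ to commute $F\otimes_R(-)$ with $\HL^r(-)$, obtaining $\HL^r(F) \cong F\otimes_R \HL^r(R)$. Substituting the canonical identification $\HL^r(R) \cong \left(R(-\delta)\right)^{*_A}$ displayed just before the lemma gives $F\otimes_R \left(R(-\delta)\right)^{*_A}$. The proof then reduces to producing a natural isomorphism
$$
F \otimes_R \left(N\right)^{*_A} \;\xrightarrow{\sim}\; \left(\Hom_R(F,N)\right)^{*_A}
$$
valid for any finitely generated graded free $R$-module $F$ and any graded $R$-module $N$. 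This last isomorphism is checked directly in the rank-one case $F=R(-a)$ (using $\Hom_R(R(-a),N) \cong N(a)$ together with the shift identity $\left(M(a)\right)^{*_A} \cong \left(M\right)^{*_A}(-a)$) and extends to finite direct sums since tensor, $\Hom_R(-,N)$, and $(-)^{*_A}$ all commute with finite direct sums. Combining these three canonical isomorphisms yields $\HL^r(F) \cong \left(\Hom_R(F,R(-\delta))\right)^{*_A}$.

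For naturality, each of the three ingredients ($\HL^r(-)$ applied to flat modules, the canonical pairing identifying $\HL^r(R)$, and the tensor-hom isomorphism) is functorial in $F$. Consequently, for any graded $R$-linear map $\phi\colon F_i\to F_{i-1}$ between finitely generated graded free modules, the square with $\HL^r(\phi)$ on top and $\left(\Hom_R(\phi,R(-\delta))\right)^{*_A}$ on the bottom commutes. Therefore the termwise isomorphisms commute with the differentials of $\HL^r(F_\bullet)$ and of $\left(\Hom_R(F_\bullet,R(-\delta))\right)^{*_A}$, and assemble into the desired isomorphism of complexes.

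The statement is classical, as noted in the excerpt, so no deep input is required. The only genuine care needed is the bookkeeping of the graded shifts: one must verify that, for each rank-one summand $R(-a)$, the twist in $\HL^r(R(-a)) \cong \HL^r(R)(-a)$ matches the twist in $\left(R(a-\delta)\right)^{*_A}$ coming from the dualization. This comes down to the shift identity $\left(M(a)\right)^{*_A} \cong \left(M\right)^{*_A}(-a)$; once that is in hand every step is formal.
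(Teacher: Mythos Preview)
Your proposal is correct and is exactly the standard argument underlying the result. The paper does not actually give a proof: it merely records the lemma as well-known and cites \cite[Section 2.15]{JOUANOLOU_IDEAUX_RES} and \cite[Corollary 1.4]{DUALITY_TAMENESS}. Your termwise-plus-naturality argument is precisely what one finds when unpacking those references, so there is nothing to contrast.
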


\begin{lemma}
	\label{lem_isoms_spectral_seq}
	Let $F_\bullet$ stand for a graded free resolution of  a finitely generated graded $R$-module $M$ by modules of finite rank.
	If $M$ is $A$-flat, then  
		$$
		\HL^i(M \otimes_{A }N) \simeq \HH_{r-i}\big(\HL^r(F_\bullet) \otimes_{A} N\big)
		$$
		for any $A$-module $N$.	
\begin{proof}
		Consider  the double complex $\Cc^\bullet_{\mm}F_\bullet \otimes_{A} N$ obtained by taking the \v{C}ech complex on $F_\bullet \otimes_{A} N$.
		Since $M$ is $A$-flat, $F_\bullet \otimes_{A} N$ is acyclic and $\HH_0\left(F_\bullet \otimes_{A} N\right)\simeq M \otimes_{A} N$.
		Therefore, as localization is exact and 
		$$
		\HL^i(R\otimes_{A} N)\simeq \begin{cases}
			\HL^r(R) \otimes_{A} N \quad\, \text{ if } i = r\\
			0 \qquad\qquad\qquad\quad \text{otherwise},
		\end{cases}
		$$
		by analyzing the spectral sequences coming from the double complex $\Cc^\bullet_{\mm}F_\bullet \otimes_{A} N$, the isomorphism $\HL^i(M \otimes_{A}N) \simeq \HH_{r-i}\big(\HL^r(F_\bullet) \otimes_{A_\ip} N\big)$ follows.
	\end{proof}
\end{lemma}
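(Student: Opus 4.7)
The plan is to combine the given free resolution of $M$ with the \v{C}ech cocomplex computing $\HL^\bullet$, via a standard double complex argument. Specifically, I would form the double complex $\Cc^\bullet_\mm F_\bullet \otimes_A N$, obtained by applying $\Cc^\bullet_\mm$ (with respect to $x_1,\ldots,x_r$) to each $F_p$ and then tensoring with $N$ over $A$, and read off the desired isomorphism from its two natural spectral sequences.

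Before running the spectral sequences, I would justify that tensoring by $N$ causes no homological damage on the $A$-side in either direction. Each $F_p$ is a finitely generated graded free $R$-module; since $R = A[x_1,\ldots,x_r]$ is $A$-free, every $F_p$ is $A$-flat, and so are the localizations of $F_p$ appearing as entries of $\Cc^\bullet_\mm F_p$. Since $F_\bullet \to M \to 0$ is exact, all $F_p$ are $A$-flat, and $M$ is $A$-flat by hypothesis, \autoref{flatexchange} gives that $F_\bullet \otimes_A N$ is acyclic in positive degrees with $\HH_0 = M \otimes_A N$; in particular it is still a resolution of $M \otimes_A N$.

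Now I analyze the two spectral sequences of the double complex. Filtering by the \v{C}ech degree first: for the free module $F_p$ one has $\HL^j(F_p) = 0$ for $j \ne r$, while $\HL^r(F_p)$ is a direct sum of shifts of $\HL^r(R)$, itself an $A$-free module. Hence $\HL^j(F_p \otimes_A N) = \HL^r(F_p) \otimes_A N$ when $j = r$ and $0$ otherwise, so this spectral sequence collapses to a single row whose next page computes $\HH_p\!\left(\HL^r(F_\bullet) \otimes_A N\right)$. Filtering in the other direction: for each fixed \v{C}ech index $i$, the complex $\Cc^i_\mm F_\bullet \otimes_A N$ is (a localization of) $F_\bullet \otimes_A N$, hence, by the resolution property just established together with exactness of localization, acyclic outside resolution degree $0$, where it equals $\Cc^i_\mm(M \otimes_A N)$. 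Passing to the next page gives $\HL^i(M \otimes_A N)$.

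The one tricky point will be bookkeeping: keeping straight the homological versus cohomological conventions of the two factors, and tracking the total-degree shift by $r$ coming from the single nonzero \v{C}ech row. Once that is done, both spectral sequences degenerate at the second page (each having support on only one row/column), so matching total degrees forces the isomorphism $\HL^i(M \otimes_A N) \simeq \HH_{r-i}\!\left(\HL^r(F_\bullet) \otimes_A N\right)$ with no edge-morphism ambiguity.
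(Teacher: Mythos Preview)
Your proposal is correct and follows essentially the same route as the paper: form the double complex $\Cc^\bullet_\mm F_\bullet \otimes_A N$ and compare its two spectral sequences, using $A$-flatness of $M$ (and of the $F_p$) to see that $F_\bullet \otimes_A N$ remains a resolution, and the computation of $\HL^\bullet$ on free $R$-modules to collapse the other direction. The only difference is that you spell out the details more fully and invoke \autoref{flatexchange} explicitly, whereas the paper leaves these as immediate observations.
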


For the proof of the main theorem of the section, we need a version of the celebrated Grothendieck's generic freeness lemma in  a  more encompassing  graded environment.
We take verbatim the basic assumptions of the non-graded version first stated in \cite[Lemma 8.1]{HOCHSTER_ROBERTS_0}, making the needed adjustment in the graded case. 
The standard assumption on the ring $A$ is that it be a  domain, but we also give a generic projectivity counterpart if $A$ is just assumed to be reduced. 
 
In order to state a bona fide graded version, we make the following convention.
First, $A$ is also considered as a  $G$-graded ring, with (trivial) grading $\left[A\right]_\nu = 0$ for $\nu \neq 0 \in G$.
In addition, an $A$-module ${H}$ is said to be \emph{$G$-graded} if it has a direct summands decomposition ${H} = \bigoplus_{\mu \in G} \left[{H}\right]_\nu$ indexed by $G$, where each $\left[{H}\right]_\nu$ is an $A$-module.
For a $G$-graded $A$-algebra $\BBB$ and a $G$-graded $\BBB$-module $M$, one says that a $G$-graded $A$-module $H$ is a \emph{$G$-graded $A$-submodule} of $M$ if one has $\left[{H}\right]_\nu \subseteq \left[M\right]_\nu$ for all $\nu \in G$.

\begin{theorem}
	\label{thm_generic_freeness_mult_grad}
	Assume {\rm \autoref{setup_generic_freeness_local_cohom}}.
	In addition, let $\BBB\supset R$ be a finitely generated $G$-graded $R$-algebra.
	Let $M$ be a finitely generated $G$-graded $\BBB$-module.
	Let $E$ be a finitely generated $G$-graded $R$-submodule of $M$ and $H$ be a finitely generated $G$-graded $A$-submodule of $M$.
	Set $\mathfrak{M}=M/(E+H)$, which is a $G$-graded $A$-module. 
	\begin{enumerate}[\rm (i)]
		\item If $A$ is reduced, then there is an element $a \in A$ avoiding the minimal primes of $A$ such that $\mathfrak{M}_a$ is $A_a$-projective.
		\item If $A$ is a domain, then there is an element $0 \neq a \in A$ such that each graded component 	
		$$
		{\left[\mathfrak{M}_a\right]}_\nu, \qquad \nu \in G
		$$
		of $\mathfrak{M}_a$ is $A_a$-free.	
	\end{enumerate}
\end{theorem}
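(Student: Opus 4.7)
My plan is to follow the induction skeleton of the Hochster--Roberts generic freeness lemma \cite[Lemma~8.1]{HOCHSTER_ROBERTS_0}, substituting in the graded tools \autoref{TM}(iii) and \autoref{finiteshifts} to achieve uniformity over $\nu \in G$.

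I would begin with a Noetherian induction on $M$ to reduce to the case $M=(\BBB/\mathfrak{q})(\ell)$ for a graded prime $\mathfrak{q}\subset\BBB$ and $\ell\in G$. Given a filtration $0=M_0\subset\cdots\subset M_n=M$ by graded $\BBB$-submodules with $M_i/M_{i-1}\cong(\BBB/\mathfrak{q}_i)(\ell_i)$, setting $K_i:=(E+H)\cap M_i$ produces short exact sequences of graded $A$-modules
$$
0\to M_{i-1}/K_{i-1}\to M_i/K_i\to (M_i/M_{i-1})/\overline{K_i}\to 0,
$$
in which $\overline{K_i}\subset M_i/M_{i-1}$ decomposes as a graded $R$-submodule (image of $E\cap M_i$) plus a graded $A$-submodule (image of $H\cap M_i$). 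Both properties in question are extension-closed for such sequences, because a free quotient splits each graded piece of the sequence.

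After this reduction, I would replace $\BBB$ by $\BBB/\mathfrak{q}$ and $A$ by its image $A_0 = A/(A\cap\mathfrak{q})$, which is automatically a domain (as $\BBB/\mathfrak{q}$ is), whether $A$ is a domain or merely reduced. If $A_0=0$, any nonzero element of $A\cap\mathfrak{q}$ annihilates $\mathfrak{M}$ and the claim is trivial, so assume $A_0\neq 0$. A graded Noether normalization over $\mathrm{Frac}(A_0)$, followed by clearing denominators, yields $0\neq a_0\in A_0$ and homogeneous elements $z_1,\ldots,z_t$, algebraically independent over $(A_0)_{a_0}$ and with $\psi(\deg z_j)>0$ (achievable because the images of $x_1,\ldots,x_r$ already furnish $\psi$-positive elements in $\BBB/\mathfrak{q}$), over which $(\BBB/\mathfrak{q})_{a_0}$ is a finite module. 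The polynomial subring $C:=(A_0)_{a_0}[z_1,\ldots,z_t]$ then satisfies \autoref{setup_mult_grad_setting}, so $(\BBB/\mathfrak{q})_{a_0}$ is a finitely generated graded $C$-module, and \autoref{TM}(iii) supplies $0\neq a_1\in(A_0)_{a_0}$ for which every graded component of $(\BBB/\mathfrak{q})_{a_0 a_1}$ is $(A_0)_{a_0 a_1}$-locally free. Incorporating the image $\overline{E}$ (a graded $R$-submodule, addressed by a second application of \autoref{TM}(iii) to the quotient by $\overline{E}$) and the image $\overline{H}$ (nonzero in only finitely many degrees, handled by classical Grothendieck generic freeness), and then lifting $a_0 a_1$ back to $A$ via prime avoidance---so that the lift avoids every minimal prime of $A$ in case (i), or is merely nonzero in case (ii)---yields the desired element $a\in A$. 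For (ii), an additional upgrade from local freeness to genuine freeness of each graded component is required, and this is where the uniform shift bound of \autoref{finiteshifts} plays its decisive role: it lets a single Fitting-ideal argument handle all degrees with a common denominator.

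The main obstacle is precisely the uniformity in $\nu\in G$: a naive application of classical generic freeness would produce for each $\nu$ a separate $a_\nu$ making $[\mathfrak{M}_{a_\nu}]_\nu$ free, with no a priori way to combine these localizations into one. \autoref{TM}(iii)---underpinned by \autoref{finiteshifts}---is precisely the mechanism that makes a single localization suffice for all degrees simultaneously, and it is what the rest of the argument leverages at each step.
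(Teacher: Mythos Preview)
Your high-level plan---run the Hochster--Roberts induction and feed in the graded tools \autoref{TM}(iii)/\autoref{cor_generic_projective} at the base---is exactly what the paper intends; its entire proof is the sentence ``follows along the same lines of \cite[Lemma~8.1]{HOCHSTER_ROBERTS_0}\ldots when $A$ is reduced, one draws upon \autoref{cor_generic_projective}.'' So at the level of strategy you are aligned.

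Where your implementation departs from the standard argument, however, it acquires a genuine gap. After reducing to a graded domain $\BBB/\mathfrak{q}$ you assert the existence of a graded Noether normalization $C=(A_0)_{a_0}[z_1,\ldots,z_t]$ with $\psi(\deg z_j)>0$, so that \autoref{setup_mult_grad_setting} holds for $C$ and \autoref{TM}(iii) applies. This cannot be arranged in general, because nothing forbids $\BBB$ from having algebra generators of $\psi$-nonpositive degree. Concretely, take $G=\ZZ$, $A=\kk$ a field, $R=\kk[x]$ with $\deg(x)=1$, and $\BBB=R[x^{-1}]=\kk[x,x^{-1}]$ with $\mathfrak{q}=0$. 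Any nonconstant homogeneous $z\in\BBB$ has the form $cx^n$; but $\BBB$ is never module-finite over $\kk[x^n]$, since one of $x,x^{-1}$ fails to be integral over $\kk[x^n]$. Thus no homogeneous Noether normalization of $\BBB/\mathfrak{q}$ exists at all, let alone a $\psi$-positive one (and no alternative linear functional $\psi'$ rescues this when the degree monoid of $\BBB/\mathfrak{q}$ contains both $g$ and $-g$). The theorem is of course true in this example---each $[\BBB]_n\cong\kk$ is free---so it is the proof route, not the statement, that breaks.

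The Hochster--Roberts argument the paper invokes does not normalize. It inducts on the number of algebra generators of $\BBB$ over $R$ (equivalently over $A$), peeling off one generator at a time and filtering $M$ accordingly; the base case is then a finitely generated graded $R$-module, where \autoref{cor_generic_projective} (for (i)) or \autoref{TM}(iii) (for (ii)) applies directly---this is precisely the ``appropriate inductive argument'' the paper refers to. If you replace your normalization step with this d\'evissage, the remaining pieces of your outline (the prime filtration, the passage to $A_0=A/(A\cap\mathfrak{q})$, and the treatment of $\overline{H}$ as a module supported in finitely many degrees) are fine.
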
	
	\begin{proof}	The proof follows along the same lines of \cite[Lemma 8.1]{HOCHSTER_ROBERTS_0} (see also \cite[Theorem 24.1]{MATSUMURA}).
		When $A$ is reduced,  one draws upon \autoref{cor_generic_projective} in order to start an appropriate inductive argument.
	\end{proof}

The ground work having been carried through the previous results so far, 
we now collect the essential applications in  the main theorem of the section.

\begin{theorem}
	\label{thm_relative_graded_local_duality}
	Under {\rm \autoref{setup_generic_freeness_local_cohom}},
	let $M$ be a finitely generated graded $R$-module. 
	\begin{enumerate}[\rm (I)]
		\item 	If $\pp \in \Spec(A)\setminus \left(T_M \cup \bigcup_{j=0}^\infty T_{\Ext_R^j(M, R)}\right)$, then the following statements hold for any $0 \le i \le r$:
		
		\begin{enumerate}[\rm (a)]
			\item $\left[\HL^i(M \otimes_{A} A_\pp)\right]_\nu$ is free over $A_\pp$ for all $\nu \in G$.
			\item 
			 For any $A_\pp$-module $N$, the natural map 
			$
			\HL^i(M) \otimes_{A_\ip} N \rightarrow \HL^i(M \otimes_{A_\ip} N)
			$
			is an isomorphism.
			\item 
			For any $A_\pp$-module $N$, there is an isomorphism 
			$$
			\HL^i\left(M \otimes_{A_\ip} N\right) \simeq {\left(\Ext_{R\otimes_{A} A_\pp}^{r-i}\left(M\otimes_{A} A_\pp, R(-\delta) \otimes_{A} A_\pp\right)\right)}^{*_{A_\pp}} \otimes_{A_\ip} N.
			$$			
		\end{enumerate}
	
		\item Let $F_\bullet : \cdots \rightarrow F_k \rightarrow \cdots \rightarrow F_1 \rightarrow F_0$ be a graded free resolution of $M$ by modules of finite rank. 
		If $\pp \in \Spec(A) \setminus \left(T_M \cup T_{D_M^{r+1}} \cup  \bigcup_{j=0}^r T_{\Ext_R^j(M, R)}\right)$ where $D_M^{r+1} = \Coker\left(\Hom_R(F_r,R) \rightarrow \Hom_R(F_{r+1},R) \right)$, then the same statements as in {\rm (a), (b), (c)} of {\rm (I)} hold.
	
		\item If $A$ is reduced, then there exists an element $a \in A$ avoiding the minimal primes of $A$ such that, for any $0 \le i \le r$, the following statements hold:
		\begin{enumerate}[\rm (a)]
			\item $\HL^i(M \otimes_A A_a)$ is projective over $A_a$.
			\item For any $A_a$-module $N$, the natural map 
			$
			\HL^i(M) \otimes_{A_a} N \rightarrow \HL^i(M \otimes_{A_a} N)
			$
			is an isomorphism.
			\item For any $A_a$-module $N$, there is an isomorphism 
			$$
			\HL^i\left(M \otimes_{A_a} N\right) \simeq {\left(\Ext_{R\otimes_{A} A_a}^{r-i}\left(M\otimes_{A} A_a, R(-\delta) \otimes_{A} A_a\right)\right)}^{*_{A_a}} \otimes_{A_a} N.
			$$			
		\end{enumerate}
	\item If $A$ is a domain, then there exists an 	element $0 \neq a \in A$ such that $\left[\HL^i(M \otimes_A A_a)\right]_\nu$ is free over $A_a$ for all $\nu \in G$.
	\end{enumerate}
	\begin{proof}		
		(I) Let $F_\bullet : \cdots \rightarrow F_k \rightarrow \cdots \rightarrow F_1 \rightarrow F_0$ be a graded free resolution of $M$ by modules of finite rank.
		
		(I)(a)
		From the fact that $\pp \not\in T_M$,  \autoref{lem_isoms_spectral_seq} and \autoref{lem_props_Maltis_relative} yield the isomorphisms 
		\begin{equation}
			\label{eq_isoms_thm_local_cohom}
			\HL^i(M \otimes_{A} A_\pp) \;\simeq\; \HH_{r-i}\left(\HL^r(F_\bullet) \otimes_{A} A_\pp\right) \;\simeq\; \HH_{r-i}\left({\Big(\Hom_R(F_\bullet, R(-\delta))\Big)}^{*_A} \otimes_{A} A_\pp\right),
		\end{equation}	
		for all $i$.
		One has that
		$$
		\HH^{r-i}\left(\Hom_{R}(F_\bullet , R(-\delta))\otimes_A A_\pp\right)  \simeq \left(\Ext_{R\otimes_{A} A_\pp}^{r-i}\left(M\otimes_{A} A_\pp, R(-\delta) \otimes_{A} A_\pp\right)\right).
		$$
		Since $\pp \not\in T_{\Ext_R^j(M, R)}$ for all $j \ge 0$,  \autoref{TM}(iii) and \autoref{lem_graded_dual_cocomp} applied to the co-complex 
		$
		\Hom_{R}(F_\bullet , R(-\delta))\otimes_A A_\pp
		$
		give the following isomorphisms
		\begin{align}
			\label{eq_local_duality_relative}
			\begin{split}
			\HL^i(M \otimes_{A} A_\pp) &\simeq \HH_{r-i}\left({\Big(\Hom_R(F_\bullet, R(-\delta))\Big)}^{*_A} \otimes_{A} A_\pp\right)\\ 
			&\simeq {\left(\Ext_{R\otimes_{A} A_\pp}^{r-i}\left(M\otimes_{A} A_\pp, R(-\delta) \otimes_{A} A_\pp\right)\right)}^{*_{A_\pp}}
			\end{split}			
		\end{align}
		for all $0 \le i \le r$.
		Again, as $\pp \not\in T_{\Ext_R^j(M, R)}$, the result is obtained from \autoref{TM}(iii).
		
		\smallskip 
		
		(I)(b) The natural map $\HH_{r-i}\left(\HL^r(F_\bullet)\right) \otimes_{A_\ip} N \rightarrow \HH_{r-i}\left(\HL^r(F_\bullet) \otimes_{A_\ip} N\right)$ is an isomorphism because each $\HH_{r-i}\left(\HL^r(F_\bullet) \otimes_{A} A_\pp\right)  \simeq \HL^i(M \otimes_{A} A_\pp)$ is a free $A_\pp$-module (from part (I)(a)) and $\HL^r(F_\bullet)$ is a complex of free $A$-modules (see \autoref{flatexchange}).
		So, the result follows from \autoref{lem_isoms_spectral_seq}.
		
		\smallskip
		
		(I)(c) From part (I)(b) and \autoref{eq_local_duality_relative} we get the isomorphisms
		$$
		\HL^i(M \otimes_{A_\ip} N) \simeq \HL^i(M) \otimes_{A_\ip} N \simeq {\left(\Ext_{R\otimes_{A} A_\pp}^{r-i}\left(M\otimes_{A} A_\pp, R(-\delta) \otimes_{A} A_\pp\right)\right)}^{*_{A_\pp}} \otimes_{A_\pp}  N, 
		$$
		which gives the result.
		
		\medskip
		
		(II) We basically repeat the same steps in the proof of (I), but instead of considering the free resolution $F_\bullet$, we now analyze the truncated complex 
		$$
		F_\bullet^{\le r+1} : \quad 0 \rightarrow  F_{r+1} \rightarrow F_r \rightarrow  \cdots \rightarrow F_{1} \rightarrow F_0.
		$$
		Note that $\HH^j(\Hom_R(F_\bullet^{\le r+1},R)) \simeq \Ext_{R}^j(M,R)$ for $0 \le j \le r$ and $\HH^{r+1}(\Hom_R(F_\bullet^{\le r+1},R)) \simeq D_M^{r+1}$.
		As $\pp \in \Spec(A) \setminus \left(T_M \cup T_{D_M^{r+1}} \cup  \bigcup_{j=0}^r T_{\Ext_R^j(M, R)}\right)$, after applying \autoref{TM}(iii) and \autoref{lem_graded_dual_cocomp} to the co-complex $\Hom(F_\bullet^{\le r+1},R) \otimes_{A} A_\pp$ and invoking \autoref{eq_isoms_thm_local_cohom} we obtain the following isomorphisms 
		\begin{align*}
		\begin{split}
		\HL^i(M \otimes_{A} A_\pp) &\simeq \HH_{r-i}\left({\Big(\Hom_R(F_\bullet, R(-\delta))\Big)}^{*_A} \otimes_{A} A_\pp\right)\\ 
		& \simeq \HH_{r-i}\left({\Big(\Hom_R(F_\bullet^{\le r+1}, R(-\delta))\Big)}^{*_A} \otimes_{A} A_\pp\right) \\
		&\simeq {\left(\Ext_{R\otimes_{A} A_\pp}^{r-i}\left(M\otimes_{A} A_\pp, R(-\delta) \otimes_{A} A_\pp\right)\right)}^{*_{A_\pp}}
		\end{split}			
		\end{align*}
		for all $0 \le i \le r$.
		Since $\pp \not\in T_{\Ext_R^j(M, R)}$ for all $0 \le j \le r$, the conclusion of (I)(a) follows from \autoref{TM}(iii).
		
		The arguments in the proofs of (I)(b) and (I)(c) only depend on the conclusion of (I)(a) and  the isomorphisms given in \autoref{eq_local_duality_relative}.
		Note that we have proved the last two results in the current part (II).
		Therefore, the conclusions of parts (I)(b) and (I)(c) also follow under the assumptions of part (II).
		
		\medskip

		(III) The proof is nearly verbatim the one of the part (II). 
		By using \autoref{cor_generic_projective}, one can choose $a \in A$ avoiding the minimal primes of $A$ such that $M_a$, $D_M^{r+1} \otimes_{A } A_a$ and $\Ext_{R}^j(M,R) \otimes_{A} A_a$ are projective $A_a$-modules for $0 \le j \le r$.
		Then, \autoref{lem_isoms_spectral_seq} and \autoref{lem_props_Maltis_relative} give the isomorphisms 
		\begin{equation*}
		\HL^i(M \otimes_{A} A_a) \;\simeq\; \HH_{r-i}\left(\HL^r(F_\bullet) \otimes_{A} A_a\right) \;\simeq\; \HH_{r-i}\left({\Big(\Hom_R(F_\bullet, R(-\delta))\Big)}^{*_A} \otimes_{A} A_a\right)
		\end{equation*}	
		for all $i$.
		Again, by applying \autoref{lem_graded_dual_cocomp} to the co-complex $\Hom(F_\bullet^{\le r+1},R) \otimes_{A} A_a$ we obtain the following isomorphism
		\begin{equation}
		\label{eq_local_duality_generic}
		\HL^i(M \otimes_{A} A_a)  
		\simeq {\left(\Ext_{R\otimes_{A} A_a}^{r-i}\left(M\otimes_{A} A_a, R(-\delta) \otimes_{A} A_a\right)\right)}^{*_{A_a}}		
		\end{equation}
		for all $0 \le i \le r$.
		So, the result of part (III)(a) also follows, and parts (III)(a) and (III)(b) are obtained from (III)(a) and \autoref{eq_local_duality_generic}.

		\medskip

		(IV) Take $a^\prime \in A$ from part (III)(c) such that the isomorphisms 
		$$
		\HL^i\left(M \otimes_{A} A_{a^\prime}\right) \simeq {\left(\Ext_{R\otimes_{A} A_{a^\prime}}^{r-i}\left(M\otimes_{A} A_{a^\prime}, R(-\delta) \otimes_{A} A_{a^\prime}\right)\right)}^{*_{A_{a^\prime}}} 
		$$
		hold, for all $0 \le i \le r$.
		
		Now, let $0 \le j \le r$. For each such $j$ apply \autoref{thm_generic_freeness_mult_grad}(ii) with $\mathfrak{B}=R$ and $\mathfrak{M}= \Ext_{R}^j(M,R)$; since there are finitely many $j$'s, there exists an $a^{\prime\prime}\neq 0$ in $A$ such that $\left[\Ext_{R}^j(M,R) \otimes_{A} A_{a^{\prime\prime}}\right]_\nu$ is a free $A_{a^{\prime\prime}}$-module for all $0 \le j \le r$ and $\nu \in G$.
		So, the result follows by setting $a=a^\prime a^{\prime\prime}$.
	\end{proof}
\end{theorem}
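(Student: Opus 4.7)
The plan is to install a relative graded local duality of the form
$$
\HL^i(M\otimes_A A_\pp)\;\simeq\;\bigl(\Ext_{R\otimes_A A_\pp}^{r-i}(M\otimes_A A_\pp,R(-\delta)\otimes_A A_\pp)\bigr)^{*_{A_\pp}},
$$
so that controlling $\HL^i$ reduces to controlling finitely many $\Ext$ modules at $\pp$. Fix a graded free resolution $F_\bullet$ of $M$ by modules of finite rank. Since $\pp\not\in T_M$, \autoref{TM}(iii) says that every graded strand of $M\otimes_A A_\pp$ is $A_\pp$-flat, so $M\otimes_A A_\pp$ is $A_\pp$-flat. Then \autoref{lem_isoms_spectral_seq} gives $\HL^i(M\otimes_A A_\pp)\simeq \HH_{r-i}(\HL^r(F_\bullet)\otimes_A A_\pp)$, and \autoref{lem_props_Maltis_relative} rewrites $\HL^r(F_\bullet)$ as $(\Hom_R(F_\bullet,R(-\delta)))^{*_A}$. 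The whole proof hinges on commuting this graded $A$-dual with cohomology.

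\textbf{Part (I).} Apply \autoref{lem_graded_dual_cocomp} to the co-complex $P^\bullet:=\Hom_R(F_\bullet,R(-\delta))\otimes_A A_\pp$. Its terms are free over $R\otimes_A A_\pp$, hence $A_\pp$-flat; its cohomology is $\Ext^j_{R\otimes_A A_\pp}(M\otimes_A A_\pp,R(-\delta)\otimes_A A_\pp)\simeq \Ext_R^j(M,R(-\delta))\otimes_A A_\pp$, which is $A_\pp$-flat strand by strand because $\pp\not\in T_{\Ext_R^j(M,R)}$ for every $j$ and one invokes \autoref{TM}(iii) again. The lemma then yields the displayed duality, and (a) follows because the $A_\pp$-dual of an $A_\pp$-flat graded module has free graded strands. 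For (b), the graded strands of $\HL^i(M\otimes_A A_\pp)$ are now $A_\pp$-free, and $\HL^r(F_\bullet)$ is a complex of $A$-flat modules, so \autoref{flatexchange} lets one commute $\otimes_{A_\pp}N$ with $\HH_{r-i}(\HL^r(F_\bullet)\otimes_A A_\pp)$; together with \autoref{lem_isoms_spectral_seq} applied with coefficients $N$ this gives the natural isomorphism. Part (c) is obtained by combining (a) and (b).

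\textbf{Part (II).} The only obstacle to running the above argument from a finite hypothesis is that \autoref{lem_graded_dual_cocomp} is applied to the full resolution, which in principle involves infinitely many $\Ext_R^j(M,R)$. Truncate the resolution to $F_\bullet^{\le r+1}$; the co-complex $\Hom_R(F_\bullet^{\le r+1},R(-\delta))$ has cohomology $\Ext_R^j(M,R(-\delta))$ for $j\le r$ and top cohomology $D_M^{r+1}(-\delta)$. The same flatness check as before now only needs $\pp$ to avoid $T_{D_M^{r+1}}$ and $T_{\Ext_R^j(M,R)}$ for $0\le j\le r$. Since the homology of the full and the truncated Matlis duals agrees in the degrees $r-i$ relevant to us, the three conclusions of (I) transfer verbatim.

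\textbf{Parts (III) and (IV), and the main obstacle.} Because only finitely many modules $M$, $D_M^{r+1}$, $\Ext_R^j(M,R)$ ($0\le j\le r$) enter the argument of (II), \autoref{cor_generic_projective} produces a single $a\in A$ (avoiding the minimal primes) that makes all of them $A_a$-projective when $A$ is reduced. Reading the proof of (II) with $A_a$ in place of $A_\pp$ then yields (III). For (IV), projectivity is not enough because finitely generated projective modules over a generic domain need not be free; this is the main obstacle. The fix is to apply the graded Grothendieck-style \autoref{thm_generic_freeness_mult_grad}(ii) to the same finite list of modules, producing an $0\ne a\in A$ for which every graded strand of $\Ext_R^j(M,R)\otimes_A A_a$ is $A_a$-free. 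Combining this $a$ with the one from (III) and running the duality isomorphism of (III)(c) finally gives graded-strand freeness of $\HL^i(M\otimes_A A_a)$, completing the theorem.
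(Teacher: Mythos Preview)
Your proposal is correct and follows essentially the same approach as the paper: you use \autoref{lem_isoms_spectral_seq} and \autoref{lem_props_Maltis_relative} to reduce to the graded dual of the $\Hom$ co-complex, apply \autoref{lem_graded_dual_cocomp} under the flatness hypotheses coming from the $T$-sets, truncate to $F_\bullet^{\le r+1}$ for (II), invoke \autoref{cor_generic_projective} for (III), and combine an element from (III) with one from \autoref{thm_generic_freeness_mult_grad}(ii) for (IV). This matches the paper's argument step for step.
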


The theorem has an important consequence, as follows.

\begin{proposition}
	\label{cor_dimension_fiber_local_cohom}
	Under {\rm \autoref{setup_generic_freeness_local_cohom}}, assume in addition that $A$ is reduced. 
	Given a finitely generated graded $R$-module $M$,  there exists a dense open subset $\mathcal{U} \subset \Spec(A)$ such that, for all $i \ge 0, \nu \in G$, the function
	\begin{equation*}
	\Spec(A) \longrightarrow \ZZ, \qquad
	\nnn \in \Spec(A) \mapsto\dim_{k(\nnn)}\Bigg( {\left[\HL^i\big(M \otimes_A k(\nnn)\big)\right]}_{\nu} \Bigg)
	\end{equation*}
	is locally constant on $\mathcal{U}$.
	\begin{proof}
		By \autoref{thm_relative_graded_local_duality}(II), there is an element $a \in A$ avoiding the minimal primes of $A$ such that for all $i \ge 0$,
		$
		\HL^i(M) \otimes_{A_a} k(\pp) \;\simeq\; \HL^i\left(M \otimes_{A_a} k(\pp)\right)
		$
		is an isomorphism and $\left[\HL^i(M \otimes_A A_a)\right]_\nu$ is a finitely generated projective module over $A_a$ for all $\nu \in G$.
		Then,  by setting $\mathcal{U}=D(a) \subset \Spec(A)$, the result follows from the fiberwise characterization of projective modules (see \cite[Exercise 20.13]{EISEN_COMM}).
	\end{proof}
\end{proposition}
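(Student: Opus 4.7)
The plan is to deduce this directly from \autoref{thm_relative_graded_local_duality}(III), which is the crucial engine. First I would invoke that result to obtain an element $a \in A$ avoiding the minimal primes of $A$ such that, for every $0 \le i \le r$, the module $\HL^i(M \otimes_A A_a)$ is $A_a$-projective and the natural base-change map
\[
\HL^i(M) \otimes_{A_a} N \;\longrightarrow\; \HL^i(M \otimes_{A_a} N)
\]
is an isomorphism for every $A_a$-module $N$. Setting $\mathcal{U} := D(a) \subset \Spec(A)$, density of $\mathcal{U}$ follows from the fact that $a$ avoids the minimal primes of the reduced ring $A$ (compare \autoref{rem_dense_reduced}).

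Next, I would specialize the base-change isomorphism to $N = k(\pp)$ for $\pp \in \mathcal{U}$, yielding
\[
\HL^i\bigl(M \otimes_A k(\pp)\bigr) \;\simeq\; \HL^i(M \otimes_A A_a) \otimes_{A_a} k(\pp).
\]
Passing to the $\nu$-th graded strand (taking graded strands commutes with $\otimes_{A_a} k(\pp)$, since $k(\pp)$ sits in degree $0$), one obtains
\[
\bigl[\HL^i\bigl(M \otimes_A k(\pp)\bigr)\bigr]_\nu \;\simeq\; \bigl[\HL^i(M \otimes_A A_a)\bigr]_\nu \otimes_{A_a} k(\pp).
\]
By \autoref{rem_finite_gen_grad_parts_local_cohom} the right-hand factor $\left[\HL^i(M \otimes_A A_a)\right]_\nu$ is a finitely generated $A_a$-module, and as a direct summand of the projective $A_a$-module $\HL^i(M \otimes_A A_a)$ it is itself a finitely generated projective $A_a$-module. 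Therefore its fiber dimension at any point of $\Spec(A_a) = \mathcal{U}$ equals the rank of this projective module at that point.

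Finally, I would conclude by invoking the standard fact that the rank function of a finitely generated projective module over a Noetherian ring is locally constant on the spectrum (see, e.g., \cite[Exercise 20.13]{EISEN_COMM}). This gives the desired local constancy of $\pp \mapsto \dim_{k(\pp)}\bigl(\left[\HL^i(M \otimes_A k(\pp))\right]_\nu\bigr)$ on $\mathcal{U}$, uniformly in $i$ and $\nu$. Note that for $i > r$ the module $\HL^i(M)$ vanishes, so these indices contribute trivially. I do not foresee a significant obstacle here: the main content has already been isolated in \autoref{thm_relative_graded_local_duality}(III), and what remains is essentially bookkeeping together with the fiberwise characterization of projectives.
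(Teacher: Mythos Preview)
Your proposal is correct and follows essentially the same approach as the paper: invoke \autoref{thm_relative_graded_local_duality} to obtain a suitable $a\in A$, set $\mathcal{U}=D(a)$, use the base-change isomorphism for $N=k(\pp)$, and conclude via the fiberwise characterization of projective modules. The only cosmetic difference is that you cite part (III) while the paper cites part (II); your choice is in fact the more natural one, and you also spell out the finite generation of the graded strands and the trivial case $i>r$, which the paper leaves implicit.
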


Closing the section, we thought it appropriate to provide a counter-example to the result stated in \cite[Corollary 1.3]{KSMITH}.
The  example shows that when $A$ is only reduced and not a domain,  generic freeness of the local cohomology modules $\HL^i(M)$ may fail to hold.

\begin{example}
	\label{examp_generic_proj}
	Let $\kk$ be a field and $A$ be the reduced Noetherian ring $A=\frac{\kk[t]}{\left(t(t-1)\right)}$.
	Let $R$ be the polynomial ring $R=A[x]$ and let $\mm$ be the graded irrelevant ideal $\mm=(x) \subset R$.
	\begin{enumerate}[(i)]
		\item Take $M$ as 
		$
		M=\frac{R}{\left(x, t\right)}=\frac{A}{(t)}. 
		$
		It is clear that $M = \HL^0(M)$.
		Then, for any $g \in A$ avoiding the minimal primes of $A$, $0\neq \left(\frac{A}{(t)}\right)_g$ is a projective $A_g$-module but not a free $A_g$-module.
		\item Take $M$ as 
		$
		M=\frac{R}{(t)} = \frac{A}{(t)}[x]. 
		$
		One has that 
		$
		\HL^1(M) = \frac{1}{x}\left(\frac{A}{(t)}[x^{-1}] \right).
		$
		Then, for any $g \in A$ avoiding the minimal primes of $A$,  $0\neq {\HL^1(M)}_g$ is a projective $A_g$-module but not a free $A_g$-module.
	\end{enumerate}
\end{example}

\section{Local cohomology of general fibers:  bigraded case}
\label{section_loc_cohom_bigrad}

The following setup will be used throughout the section. 

\begin{setup}\rm 
	\label{setup_mult_grad_local_cohom}
	Let $A$ be a  reduced Noetherian ring.
	Consider the $(\Z\times\Z)$-bigraded polynomial ring $\mathfrak{R} = A[x_1,\ldots,x_r,y_1,\ldots,y_s]$, where $\bideg(x_i)=(\delta_i,0)$ with $\delta_i > 0$ and $\deg(y_i)=(-\gamma_i,1)$ with $\gamma_i \ge 0$.
	Consider $\mm =(x_1,\ldots,x_r)\mathfrak{R}\subset \mathfrak{R}$ as a $(\Z\times\Z)$-bigraded ideal and recall that 
	$$
	\HL^r\left(\mathfrak{R}\right)\simeq \frac{1}{x_1\cdots x_r}A[x_1^{-1},\ldots,x_r^{-1},y_1,\ldots,y_s].
	$$ 		
	Let $S$ be the standard graded polynomial ring given by
	$$
	S := A\big[y_i \mid 1 \le i \le s \text{ and } \gamma_i = 0\big] \subset A[y_1,\ldots,y_s] \subset \mathfrak{R}.
	$$	
\end{setup}

If $\M$ is a $(\Z\times\Z)$-bigraded module over $\mathfrak{R}$, then, for any $i\geq 0$, the local cohomology module $\HL^i(\M)$ has a natural structure of bigraded $\mathfrak{R}$-module.
Also, denote by ${\left[\M\right]}_j$ the $\Z$-graded $S$-module
\begin{equation}
	\label{eq_R_grad_bigrad_convention}
	{\left[M\right]}_j = \bigoplus_{\nu\in \ZZ} {\left[M\right]}_{(j,\nu)}.
\end{equation}

\begin{remark}  
		\label{rem_finite_local_cohom}
		As a particular, but important case, take $M=\mathfrak{R}$. Let $\{ y_{i_1}, \ldots, y_{i_l} \} \subset \{ y_1, \ldots, y_s \}$ stand for the subset of variables with strictly negative $x$-degree, that is, $\bideg(y_{i_t})=(-\gamma_{i_t}, 1)$ with $-\gamma_{i_t}<0$.
		Then, for a fixed $j \in \ZZ$,  ${\left[\HL^r(\mathfrak{R})\right]}_{j}=\bigoplus_{\nu\in \ZZ} {\left[\HL^r(\mathfrak{R})\right]}_{(j,\nu)}$ is a finitely generated $\Z$-graded $S$-module with a finite set of generators given by 
		\begin{equation*}
			\Bigg\lbrace\begin{array}{c|c}
				\frac{1}{x_1^{\alpha_1}\cdots x_r^{\alpha_r}} y_{i_1}^{\beta_1}\cdots y_{i_l}^{\beta_l} & 
				\begin{array}{c}
					\alpha_1\ge 1,\ldots,\alpha_r\ge 1,\; \beta_1\ge 0,\ldots,\beta_l \ge 0,\\
					-(\alpha_1\delta_1+\cdots+\alpha_r\delta_r + \beta_1\gamma_{i_1} + \cdots +\beta_l\gamma_{i_l}) = j
				\end{array}
			\end{array}\Bigg\rbrace.
		\end{equation*}
\end{remark}

Fix the following additional notation for the section.

\begin{notation}\rm
	\label{nota_bigrad_resolution}
	Let $\M$ be a finitely generated bigraded $\mathfrak{R}$-module and choose a bigraded free resolution 
	$
	\bbF_\bullet:  \cdots \xrightarrow{\phi_2} \bbF_1 \xrightarrow{\phi_1} \bbF_0 \rightarrow \M \rightarrow 0
	$
	where each $F_i$ is a finitely generated bigraded free $\mathfrak{R}$-module.
	Let 
	$
	\LL_\bullet = \HL^r(\bbF_\bullet):  \cdots  \xrightarrow{\Psi_2} \LL_1 \xrightarrow{\Psi_1} \LL_0 
	$
	be the induced complex in local cohomology where 
	$\LL_i = \HL^r(\bbF_i)$ and  $\Psi_i = \HL^r(\phi_i) : \LL_i \rightarrow \LL_{i-1}$
	for $i \ge 1$.
\end{notation}

\begin{lemma}
	\label{prop_auxiliary_results_bigrad}
	Under {\rm \autoref{setup_mult_grad_local_cohom}} and with the above notation, the following statements hold:
	\begin{enumerate}[\rm (i)]
		\item  There is an isomorphism 
		$
		\HL^i(\M) \simeq \HH_{r-i}\big(\LL_\bullet\big)
		$
		of bigraded $\mathfrak{R}$-modules for $i \ge 0$.
		\item 
		$
		{\left[\HL^i(\M)\right]}_{j} \simeq {\left[\HH_{r-i}\big(\LL_\bullet\big)\right]}_{j}
		$
		is a finitely generated graded $S$-module for $i\geq 0$ and $j \in \ZZ$.
		\item There is a dense open subset $V \subset \Spec(A)$ such that, for every $\nnn \in V$, there is an isomorphism 
		$
		\HL^i\big(\M \otimes_A k(\nnn)\big) \simeq \HH_{r-i}\big(\LL_\bullet \otimes_A k(\nnn)\big)
		$
		of bigraded $\big(\mathfrak{R} \otimes_A k(\nnn)\big)$-modules for $i \ge 0$.
		\item Fix an integer $j \in \ZZ$.
		Then, there exists an element $a \in A$ avoiding the minimal primes of $A$ such that ${\left[{(C_i(\LL_\bullet))}_a\right]}_{j}$ is a projective module over $A_a$ for $0 \le i \le r$.
	\end{enumerate}
	
	\begin{proof}		
		For (i) and (ii) see \cite[Theorem 2.1]{CHARDIN_POWERS_IDEALS}.
		
		(iii) The argument is similar to the one in \autoref{lem_isoms_spectral_seq}.
		
		(iv) Fix $0 \le i \le r$.
		From \autoref{rem_finite_local_cohom}, one has that ${\left[C_i(\LL_\bullet)\right]}_{j}$ is a finitely generated graded $S$-module. 
		Therefore, \autoref{cor_generic_projective} yields the existence of an element $a_i \in A$ avoiding the minimal primes of $A$ such that 
		$
		{\left[{(C_i(\LL_\bullet))}_{a_i}\right]}_{j}
		$ 
		is a projective $A_{a_i}$-module.
		The required result follows by taking $a=a_0a_1\cdots a_{r}$.
	\end{proof}
\end{lemma}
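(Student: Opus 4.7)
The plan is to handle the four parts in sequence, since (iii) and (iv) rest on the framework established in (i)--(ii). Parts (i) and (ii) are a direct appeal to \cite[Theorem 2.1]{CHARDIN_POWERS_IDEALS}, but it is worth recording the underlying argument because the same double complex drives (iii). Consider $\Cc_\mm^\bullet \bbF_\bullet$. Since each $\bbF_q$ is $\mathfrak{R}$-free, $\HL^p(\bbF_q) = 0$ for $p \ne r$ while $\HL^r(\bbF_q) = \LL_q$; one spectral sequence of this double complex therefore degenerates at $E_2$ to $\HH_{r-i}(\LL_\bullet)$, and the other, using that $\bbF_\bullet$ is a resolution of $\M$, degenerates to $\HL^i(\M)$. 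For (ii), \autoref{rem_finite_local_cohom} says $[\HL^r(\mathfrak{R})]_j$ is finitely generated as a graded $S$-module, so each $[\LL_q]_j$ is as well, and the subquotient $[\HH_{r-i}(\LL_\bullet)]_j$ inherits finite generation by Noetherianity of $S$.

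For (iii), I would mimic \autoref{lem_isoms_spectral_seq}, replacing its global $A$-flatness hypothesis with generic projectivity. First note that \autoref{setup_mult_grad_local_cohom} fits into \autoref{setup_mult_grad_setting} via $\psi(a,b) := \alpha a + \beta b$ with $\alpha > 0$ and $\beta > \alpha \max_i \gamma_i$, which is strictly positive on every $\bideg(x_i)$ and every $\bideg(y_i)$. Applying \autoref{cor_generic_projective} to $\M$ yields $a \in A$ avoiding the minimal primes with $\M_a$ being $A_a$-projective; set $V := D(a)$. For $\nnn \in V$, flatness of $\M_\nnn$ over $A_\nnn$ forces $\Tor_i^A(\M, k(\nnn)) = 0$ for $i > 0$, and since each $\bbF_q$ is $A$-flat, $\bbF_\bullet \otimes_A k(\nnn)$ is an $(\mathfrak{R} \otimes_A k(\nnn))$-free resolution of $\M \otimes_A k(\nnn)$. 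The \v{C}ech complex on $\mm$ commutes with $-\otimes_A k(\nnn)$ since it is built from $A$-flat localizations, so $\HL^p(\bbF_q \otimes_A k(\nnn))$ vanishes for $p \ne r$ and equals $\LL_q \otimes_A k(\nnn)$ for $p = r$. Running the double-complex argument from (i) over $k(\nnn)$ then produces the stated isomorphism.

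For (iv), fix $j$. By \autoref{rem_finite_local_cohom} each $[\LL_i]_j$ is a finitely generated graded $S$-module, and hence so is the quotient $[C_i(\LL_\bullet)]_j$. Applying \autoref{cor_generic_projective} (with the role of $R$ there played by the standard graded polynomial ring $S$) to the $S$-module $[C_i(\LL_\bullet)]_j$ produces an element $a_i \in A$ avoiding the minimal primes such that $[C_i(\LL_\bullet)]_j \otimes_A A_{a_i}$ is $A_{a_i}$-projective. Since only the $r+1$ indices $0 \le i \le r$ intervene, the product $a := a_0 a_1 \cdots a_r$ still avoids the minimal primes and satisfies the conclusion uniformly.

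The main obstacle is part (iii), specifically the base-change compatibility: one must confirm that $\HL^r(\mathfrak{R}) \otimes_A k(\nnn) \simeq \HL^r(\mathfrak{R} \otimes_A k(\nnn))$ and that $\HL^p(\mathfrak{R} \otimes_A k(\nnn)) = 0$ for $p < r$. Both follow from the $A$-flatness of each $\Cc^p_\mm(\mathfrak{R})$, as a localization of a polynomial ring, together with the standard computation of local cohomology of a polynomial ring at an ideal generated by $r$ of its variables. Once these compatibilities are in hand, the rest is a routine spectral sequence bookkeeping.
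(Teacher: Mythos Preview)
Your proposal is correct and follows the same route as the paper: parts (i)--(ii) are referred to \cite[Theorem~2.1]{CHARDIN_POWERS_IDEALS}, part (iii) is the spectral-sequence argument of \autoref{lem_isoms_spectral_seq} after securing generic $A$-flatness of $\M$ via \autoref{cor_generic_projective}, and part (iv) applies \autoref{cor_generic_projective} to each finitely generated $S$-module $[C_i(\LL_\bullet)]_j$ and multiplies the resulting elements $a_i$. You have simply spelled out what the paper leaves implicit, including the choice of $\psi$ placing \autoref{setup_mult_grad_local_cohom} inside \autoref{setup_mult_grad_setting} and the base-change compatibilities for the \v{C}ech complex.
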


Next is the main result of this section.
Its proof is very short as it is downplayed by the previous lemma and its predecessors.

\begin{theorem}
	\label{thm_local_cohom_general_fiber_bigrad_mod}
	Under {\rm \autoref{setup_mult_grad_local_cohom}},
	let $\M$ be a finitely generated bigraded $\mathfrak{R}$-module and fix an integer $j \in \ZZ$.
	Then, there exists a dense open subset $\mathcal{U}_j \subset \Spec(A)$ such that, for all $i \ge 0, \nu \in \ZZ$, the function
	\begin{equation*}
	\Spec(A) \longrightarrow \ZZ, \qquad
	\nnn \in \Spec(A) \mapsto\dim_{k(\nnn)}\Bigg( {\left[\HL^i\big(\M \otimes_A k(\nnn)\big)\right]}_{(j,\nu)} \Bigg)
	\end{equation*}
	is locally constant on $\mathcal{U}_j$.
	\begin{proof}
		By \autoref{prop_auxiliary_results_bigrad}(iii) one can choose a dense open subset $U \subset \Spec(A)$ such that 
		$
		\HL^i\big(\M \otimes_A k(\nnn)\big) \simeq \HH_{r-i}\big(\LL_\bullet   \otimes_A k(\nnn)\big).
		$
		By \autoref{rem_eq_dim_in_terms_of_Cokers},  one has an exact sequence
		$$
		0 \rightarrow \HL^i\big(\M \otimes_A k(\nnn)\big) \rightarrow \cCC_{r-i}(\LL_\bullet) \otimes_{A} k(\pp) \rightarrow  \LL_{r-i-1} \otimes_{A} k(\pp) \rightarrow \cCC_{r-i-1}(\LL_\bullet) \otimes_{A} k(\pp) \rightarrow 0
		$$
		for any $\pp \in U$.
		Therefore, the result is clear by setting $\mathcal{U}_j=U\cap V_j$ with $V_j \subset \Spec(A)$ a dense open subset as in \autoref{prop_auxiliary_results_bigrad}(iv).
	\end{proof}
\end{theorem}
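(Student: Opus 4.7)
The plan is to reduce the problem to the behavior of graded strands of the top local cohomology of the free modules in a bigraded free resolution of $\M$, where the generic freeness machinery from Section~\ref{section_gen_freeness_local_cohom} can be applied. First I fix a bigraded free resolution $\bbF_\bullet$ of $\M$ by finitely generated free $\mathfrak{R}$-modules and form the complex $\LL_\bullet = \HL^r(\bbF_\bullet)$ as in \autoref{nota_bigrad_resolution}. The crucial observation is that although each $\LL_i$ is not finitely generated as an $\mathfrak{R}$-module, its $j$-th strand ${[\LL_i]}_j = \bigoplus_{\nu\in\ZZ}{[\LL_i]}_{(j,\nu)}$ is a \emph{finitely generated} graded $S$-module, by the explicit description of ${[\HL^r(\mathfrak{R})]}_j$ recorded in \autoref{rem_finite_local_cohom}.

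Next, I invoke \autoref{prop_auxiliary_results_bigrad}(iii) to pick a dense open subset $U \subset \Spec(A)$ such that for all $\nnn \in U$ and all $i \ge 0$ one has the identification
$$
\HL^i(\M \otimes_A k(\nnn)) \;\simeq\; \HH_{r-i}(\LL_\bullet \otimes_A k(\nnn)).
$$
I then apply \autoref{prop_auxiliary_results_bigrad}(iv) to the finitely many cokernels $C_0(\LL_\bullet),\dots,C_r(\LL_\bullet)$ in the $j$-strand: this yields an element $a \in A$ avoiding the minimal primes of $A$ such that, setting $V_j = D(a)$, the module ${[C_i(\LL_\bullet)_a]}_j$ is a finitely generated projective $A_a$-module for every $0\le i \le r$. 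Put $\mathcal{U}_j = U \cap V_j$; since $A$ is reduced, $V_j$ is dense, and so $\mathcal{U}_j$ is a dense open subset of $\Spec(A)$.

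To conclude, I unwind $\HH_{r-i}(\LL_\bullet \otimes_A k(\nnn))$ via the four-term exact sequence of \autoref{rem_eq_dim_in_terms_of_Cokers}: taking its $(j,\nu)$-graded part yields the exact sequence of $k(\nnn)$-vector spaces
\begin{equation*}
0 \to {[\HH_{r-i}(\LL_\bullet \otimes_A k(\nnn))]}_{(j,\nu)} \to {[C_{r-i}(\LL_\bullet)]}_{(j,\nu)}\otimes_A k(\nnn) \to {[\LL_{r-i-1}]}_{(j,\nu)} \otimes_A k(\nnn) \to {[C_{r-i-1}(\LL_\bullet)]}_{(j,\nu)}\otimes_A k(\nnn) \to 0.
\end{equation*}
Over $\mathcal{U}_j$ the three rightmost terms have dimensions which are locally constant functions of $\nnn$: the ${[C_{r-i}(\LL_\bullet)_a]}_{(j,\nu)}$ are finitely generated projective over $A_a$ (and hence locally free of locally constant rank by the fiberwise criterion, \cite[Exercise~20.13]{EISEN_COMM}), and ${[\LL_{r-i-1}]}_{(j,\nu)}$ is a direct summand of a free $A$-module. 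The alternating-sum formula on the four-term sequence then gives the local constancy of $\dim_{k(\nnn)}{[\HL^i(\M\otimes_A k(\nnn))]}_{(j,\nu)}$ on $\mathcal{U}_j$.

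The main obstacle is that $\LL_i = \HL^r(\bbF_i)$ is \emph{not} finitely generated as an $A$-module or $\mathfrak{R}$-module, so one cannot directly apply Grothendieck-style generic freeness to it; the point of restricting to a single $x$-degree $j$ is precisely that \autoref{rem_finite_local_cohom} makes ${[\LL_i]}_j$ finitely generated over the subring $S$, which is exactly what is needed to run \autoref{cor_generic_projective} on the cokernels. Without this restriction to a single $j$ one cannot hope for a uniform dense open $\mathcal{U}$ -- as Katzman's example \autoref{examp_Katzman} shows, this limitation is intrinsic to the problem.
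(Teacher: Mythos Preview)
Your proof is correct and follows essentially the same approach as the paper's own argument: both use \autoref{prop_auxiliary_results_bigrad}(iii) to identify $\HL^i(\M\otimes_A k(\nnn))$ with $\HH_{r-i}(\LL_\bullet\otimes_A k(\nnn))$ on a dense open $U$, then combine the four-term exact sequence of \autoref{rem_eq_dim_in_terms_of_Cokers} with the generic projectivity of the $j$-strand cokernels from \autoref{prop_auxiliary_results_bigrad}(iv) to obtain $\mathcal{U}_j = U\cap V_j$. You simply make explicit the final step that the paper leaves as ``the result is clear'': the alternating-sum computation together with the fiberwise rank criterion for finitely generated projective modules.
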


The following example shows that in the current setting one can only hope to control certain graded parts, as in the result of \autoref{thm_local_cohom_general_fiber_bigrad_mod}.
It also shows the crucial importance of the choice of bigrading in \autoref{setup_mult_grad_local_cohom} for the correctness of \autoref{thm_local_cohom_general_fiber_bigrad_mod}.

\begin{example}[{\cite[Theorem 1.2]{KATZAMAN}}]
	\label{examp_Katzman}
	Since the example is slightly long, for organizational purposes, we divide it into four different parts.
	The first three parts are intended to stress strange phenomena that can happen if we weaken some of the previous settings.
	
	\medskip
	
	{\sc Part 1:} This part shows that the result of \autoref{thm_relative_graded_local_duality}(III) may fail if we consider a more general situation (this part should be compared with {\sc Part 4} below).	
	
	Let $\kk$ be a field and $R$ be the graded $\kk$-algebra
	$$
	R = \frac{\kk[s,t,x,y,u,v]}{\left(xsx^2v^2 - (t+s)xyuv+ty^2u^2\right)}
	$$
	with grading $\deg(s)=\deg(t)=\deg(x)=\deg(y)=0$ and $\deg(u)=\deg(v)=1$.
	Then, for every $d\ge 2$, one has that ${\left[\HH_{R_+}^2(R)\right]}_{-d}$ has $\tau_{d-1}$-torsion where 
	$
	\tau_{d-1} =(-1)^{d-1}(t^{d-1} +st^{d-2} +\cdots+s^{d-2}t+s^{d-1}) \in \kk[s,t].
	$
	
	By \cite[Lemma 1.1(ii)]{KATZAMAN}, it gives rise to  infinitely many irreducible homogeneous polynomials $\lbrace  p_i \in \kk[s,t] \mid i \ge 1\rbrace$ such that  $\HH_{R_+}^2(R)$ has $p_i$-torsion.
	Furthermore, from \cite[proof of Theorem 1.2]{KATZAMAN}, each $(p_i)$ yields an associated prime of $\HH_{R_+}^2(R)$ in $\kk[s,t]$.
	Therefore, one cannot find an element $0\neq a \in \kk[s,t]$ such that 
	$$
	\HH_{R_+}^2(R) \otimes_{\kk[s,t]} \kk[s,t]_a
	$$ 
	is a projective $\kk[s,t]_a$-module.
	
	\medskip
	
	{\sc Part 2:}
	In this part, we specify the current example in a bigraded setting that agrees with \autoref{setup_mult_grad_local_cohom}, and we show that \autoref{thm_local_cohom_general_fiber_bigrad_mod} cannot be extended to control all the possible graded parts.
	
	Suppose that $A = \kk[s,t]$ and that $R$ is the standard bigraded $A$-algebra 
	$$
	R = A[u,v,x,y]/\left(sx^2v^2 - (t+s)xyuv+ty^2u^2\right)
	$$ 
	with $\bideg(u)=\bideg(v)=(1,0)$ and $\bideg(x)=\bideg(y)=(0,1)$.
	If one assumes that there exists a dense open subset $\mathcal{U} \subset \Spec(A)$ such that the function 
	$$\pp \in \Spec(A) \mapsto\dim_{k(\nnn)}\left( {\left[\HH_{(u,v)}^2(R)\right]}_{(j,\nu)} \otimes_A k(\nnn) \right)$$ 
	is constant on $\mathcal{U}$ for all $j,\nu \in \ZZ$, then there exists an element $0 \neq a \in A$ such that the $A_a$-module ${\left[\HH_{(u,v)}^2(R)\right]}_{(j,\nu)} \otimes_{A} A_a$ is projective for all $j,\nu \in \ZZ$ (see \cite[Exercise 20.13]{EISEN_COMM}). 
	But, this conclusion contradicts the assertion shown in {\sc Part 1}. 
	
	\medskip
	
	{\sc Part 3:}
	In this part, we provide a bigrading that does not agree with \autoref{setup_mult_grad_local_cohom} and for which the statement of \autoref{thm_local_cohom_general_fiber_bigrad_mod} would be incorrect (this bigrading is deduced for the bigrading used in \cite[proof of Theorem 1.2]{KATZAMAN}). 
	
	First, we need to recall some details from the construction made in \cite[proof of 
	Theorem 1.2]{KATZAMAN}.
	Consider $R$ as a $\ZZ^3$-graded ring by setting  $\text{trideg}(s)=\text{trideg}(t)=(0,0,0)$, $\text{trideg}(x) =\text{trideg}(y) = (1,1,0)$ and  $\text{trideg}(u)=\text{trideg}(v)=(0,0,1)$.
	For $d \ge 2$ it can be proven that the $\kk[s,t]$-module 
	\begin{equation}
		\label{eq_tors_trigr}
		\left[\HH^2_{(u,v)}(R)\right]_{(d,*,-d)} \quad \text{has $\tau_{d-1}$-torsion}
	\end{equation}	
	(see the argument made for ``$\Coker A_{d-1} $'' considered in \cite[Proof of Theorem 1.2]{KATZAMAN}, and note that ``$(\Coker\, A_{d-1} )_{(d,d)}=\Coker\, B_{d-1}$'' is a $\kk[s,t]$-submodule of $\left[\HH^2_{(u,v)}(R)\right]_{(d,*,-d)}$ above).
	
	As in {\sc Part 2}, consider $R$ as a bigraded algebra over $A=\kk[s,t]$, but now set $\bideg(u) = \bideg(v) = (1,0)$ and  $\bideg(x)=\bideg(y)=(1,1)$.	
	The latter bigrading can be obtained by pushing forward the above $\ZZ^3$-grading via the map $\pi : \ZZ^3 \rightarrow \ZZ^2, \, (n_1,n_2,n_3) \mapsto (n_1+n_3, n_2)$.
	In other words, under this new bigrading the graded part $\left[\HH_{(u,v)}^2(R)\right]_{(a,b)}$ can be described with the following direct sum
	\begin{equation}
		\label{eq_dec_bigr}
		\left[\HH_{(u,v)}^2(R)\right]_{(a,b)} = \bigoplus_{\substack{a_1,a_2\in \ZZ\\ a_1+a_2 = a}} \left[\HH^2_{(u,v)}(R)\right]_{(a_1,\,b,\,a_2)}.
	\end{equation}
	Combining \autoref{eq_tors_trigr} and \autoref{eq_dec_bigr} it follows that $\left[\HH_{(u,v)}^2(R)\right]_{(0,*)}$ has $\tau_{d-1}$-torsion for all $d \ge 2$.
	Therefore, we obtain that \autoref{thm_local_cohom_general_fiber_bigrad_mod} fails when setting $j=0$ and $\M = R$ with the current bigrading. 
	The fact that \autoref{thm_local_cohom_general_fiber_bigrad_mod} is not valid in this case is somehow not surprising because 
	$
	\left[\HH_{(u,v)}^2(\kk[s,t,x,y,u,v])\right]_{(0,*)}
	$ 
	is then an \emph{infinitely generated} $A$-module with a set generators 
	$$
	\Big\lbrace
	\frac{1}{u^{\alpha_1}v^{\alpha_2}} s^{\beta_1}t^{\beta_2} x^{\gamma_1}y^{\gamma_2} 
	\mid  \gamma_1 + \gamma_2 - \alpha_1 - \alpha_2 = 0
	\Big\rbrace;
	$$
	this is an opposite situation to \autoref{rem_finite_local_cohom}.
	
	\medskip
	
	{\sc Part 4:}
	On the other hand, similarly to the previous \autoref{section_gen_freeness_local_cohom}, set $S=\kk[s,t,x,y]$ and suppose that $R$ is the standard graded $S$-algebra $R = S[u,v]/\left(sx^2v^2 - (t+s)xyuv+ty^2u^2\right)$.
	Then, \autoref{thm_relative_graded_local_duality}(IV) implies the existence of an element $0 \neq b \in S$ such that 
	$$
	\HH_{(u,v)}^2(R) \otimes_S S_b
	$$ 
	is a free $S_b$-module.
	Additionally, \autoref{cor_dimension_fiber_local_cohom} gives a dense open subset $\mathcal{V} \subset \Spec(S)$ such that the function 
	$$\qqq \in \Spec(S) \mapsto\dim_{k(\qqq)}\left( {\left[\HH_{(u,v)}^2(R)\right]}_{j} \otimes_S k(\qqq) \right)$$ 
	is constant on $\mathcal{V}$ for all $j \in \ZZ$.
\end{example}

\section{Specialization}\label{section_specialization_powers_mods}

In this section, we focus on various specialization environments, where the main results are obtained as an application of the previous sections.

\subsection{Powers of a graded module}

In this part we look at the situation of a given graded module and its symmetric and Rees powers. 
More precisely, we consider the problem of the local behavior of the following gadgets:
\begin{enumerate}[\rm (I)]
	\item Local cohomology of a general fiber for all the symmetric powers of a module.
	\item Local cohomology of a general specialization for all the Rees powers of a module.
\end{enumerate}
The main results in this regard turn out to be obtainable as an application of \autoref{thm_local_cohom_general_fiber_bigrad_mod}.

\medskip

Throughout this section the following simplified setup will be assumed. 
\begin{setup}\rm 
	\label{setup_specialization_powers}
	Let $A$ be a Noetherian reduced ring.
	Let $R$ be a finitely generated graded $A$-algebra which is positively graded (i.e., $\N$-graded).  
	Let $\mm$ be the graded irrelevant ideal $\mm = {\left[R\right]}_+$.
\end{setup}

\subsubsection{Symmetric powers}

Quite generally, if $M$ is a finitely generated $R$-module with a free presentation 
$$
F_1 \xrightarrow{\varphi} F_0 \rightarrow M \rightarrow 0,
$$
associated to a set of generators of $M$ with $s$ elements, then the symmetric algebra of $M$ over $R$ has a presentation
$\Sym_R\left(M\right) \simeq \BBB/\mathcal{L},$
where 
$
\label{eq_BBB_algebra}
\BBB: = R[y_1,\ldots,y_s]
$
is a polynomial ring over $R$ and $\mathcal{L} = I_1 \Big( \left[y_1,\ldots,y_s\right] \cdot \varphi \Big)$.

Now, if $M$ is moreover graded, one has a presentation which is graded,
where, say, $F_0= \bigoplus_{j=1}^sR(-\mu_j)$.
Fix an integer $b \ge \max\{\mu_1,\ldots,\mu_s\}$,
 consider the shifted module $M(b)$ with corresponding graded free presentation
$$
F_1(b) \xrightarrow{\varphi} \bigoplus_{j=1}^sR(b-\mu_j) \rightarrow M(b) \rightarrow 0.
$$
Then, the symmetric algebra $\Sym_R\left(M(b)\right)$  is naturally a bigraded $A$-algebra with the same sort of presentation as above, only now $\BBB$ has a bigraded structure 
 with bidegrees  $\bideg(x)=(\nu,0)$ for any $x \in {\left[R\right]}_\nu \subset \BBB$ and  $\bideg(y_j)=(\mu_j-b,1)$
for $1\le j \le s$.

Clearly, then
\begin{equation}
	\label{eq_shifts_sym_powers}
	{\left[\Sym_R\left(M(b)\right)\right]}_{(j,k)} \simeq {\left[\Sym_R^k(M)\right]}_{j+kb}
\end{equation}
for $k \ge 0, j \in \ZZ$,
where $\Sym_R^k(M)$ denotes the $k$-th symmetric power of $M$.

Let $T=A[x_1,\ldots,x_r]$ be a standard graded polynomial ring mapping onto $R$,
set in addition $\AAA = T[y_1,\ldots,y_s]$, 
with a bigrading given in the same way as for $\BBB$.
Therefore, one has the following surjective bihomogeneous homomorphisms 
\begin{equation}
\label{eq_bigrad_Sym_as_quotient}
\AAA \twoheadrightarrow \BBB \twoheadrightarrow \Sym_R\left(M(b)\right).
\end{equation}

\begin{notation}\rm 
	 If $M$ is a finitely generated graded $R$-module, let $\beta(M)$ denote the maximal degree of an element in a minimal set of generators of $M$.
	Thus, by the graded version of Nakayama's lemma one has 
	$
	\beta(M) := \max\{ k \in \ZZ  \mid  {\left[M/\mm M\right]}_k \neq 0 \}.
	$
\end{notation}

One has the following theorem as an  application of \autoref{thm_local_cohom_general_fiber_bigrad_mod} and the  above considerations.

\begin{theorem}
	\label{thm_general_fibers_symmetric_powers}
	Under {\rm \autoref{setup_specialization_powers}},
	let $M$ be a finitely generated graded $R$-module and  let $j$ be a fixed integer.
	Given $b \in \ZZ$ such that $b\ge \beta(M)$, 
	 there exists a dense open subset $\mathcal{U}_j \subset \Spec(A)$ such that, for all $i\ge 0, k\ge 0$, the function
	\begin{equation*}
	\Spec(A) \longrightarrow \ZZ,\qquad
	\nnn \in \Spec(A) \mapsto\dim_{k(\nnn)}\left( {\left[\HL^i\Big( \Sym_{R \otimes_A k(\nnn)}^k\left(M \otimes_A k(\nnn)\right) \Big)\right]}_{j+kb} \right)
	\end{equation*}
	is locally constant on $\mathcal{U}_j$.
	\begin{proof}
		Drawing on the assumption that $b\ge \beta(M)$ and \autoref{eq_bigrad_Sym_as_quotient}, one applies the statement of \autoref{thm_local_cohom_general_fiber_bigrad_mod} by taking the bigraded module there to be $\Sym_R\left(M(b)\right)$.
		Since one has the isomorphism
		$$
		\Sym_R(M) \otimes_A k(\nnn) \simeq \Sym_{R \otimes_A k(\nnn)}\left(M \otimes_A k(\nnn)\right),
		$$
		the result follows from \autoref{eq_shifts_sym_powers}.
	\end{proof}
\end{theorem}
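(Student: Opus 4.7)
The plan is to apply \autoref{thm_local_cohom_general_fiber_bigrad_mod} to the bigraded module $\Sym_R(M(b))$ and then translate the information on its bigraded strands into information on the graded pieces of the successive symmetric powers of $M$ via \autoref{eq_shifts_sym_powers}.

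First I would verify that the presentation $\AAA \twoheadrightarrow \Sym_R(M(b))$ from \autoref{eq_bigrad_Sym_as_quotient} realizes $\Sym_R(M(b))$ as a finitely generated bigraded module over an ambient polynomial ring of exactly the form prescribed by \autoref{setup_mult_grad_local_cohom}. This is where the standing hypothesis enters: since $b \ge \beta(M) \ge \max\{\mu_1,\ldots,\mu_s\}$, one has $\mu_j - b \le 0$, so the shifted variables satisfy $\bideg(y_j) = (-\gamma_j, 1)$ with $\gamma_j := b - \mu_j \ge 0$, in accordance with the sign convention of the setup; the $x$-variables already satisfy $\bideg(x_i) = (\delta_i, 0)$ with $\delta_i > 0$ by construction.

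With the hypotheses in place, \autoref{thm_local_cohom_general_fiber_bigrad_mod} applied to the bigraded module $\Sym_R(M(b))$ and the given integer $j$ yields a dense open subset $\mathcal{U}_j \subset \Spec(A)$ on which, for every $i \ge 0$ and every $k \ge 0$, the function
$$
\nnn \;\longmapsto\; \dim_{k(\nnn)}\Bigl[\HL^i\bigl(\Sym_R(M(b)) \otimes_A k(\nnn)\bigr)\Bigr]_{(j,k)}
$$
is locally constant (using $\nu := k$). To convert this into the function appearing in the statement I would invoke two elementary compatibilities: base change commutes with symmetric algebras, giving
$$
\Sym_R(M(b)) \otimes_A k(\nnn) \;\simeq\; \Sym_{R \otimes_A k(\nnn)}\bigl((M \otimes_A k(\nnn))(b)\bigr),
$$
and \autoref{eq_shifts_sym_powers} applied over $k(\nnn)$ identifies the bidegree $(j,k)$ strand of the right-hand side with $[\Sym^k_{R \otimes_A k(\nnn)}(M \otimes_A k(\nnn))]_{j+kb}$.

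The only real verification, rather than a genuine obstacle, is to confirm that the local cohomology functor on both sides is taken with respect to the same ideal. The ideal $\mm = (x_1,\ldots,x_r)$ used in \autoref{thm_local_cohom_general_fiber_bigrad_mod} is the contraction to $\AAA$ of the irrelevant ideal of $R \otimes_A k(\nnn)$ under the surjection $T \otimes_A k(\nnn) \twoheadrightarrow R \otimes_A k(\nnn)$, so the two local cohomology groups of the $\Sym^k$-module agree as graded $k(\nnn)$-vector spaces. With this identification, the two functions coincide, and the conclusion is immediate.
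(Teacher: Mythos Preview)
Your proposal is correct and follows essentially the same route as the paper: apply \autoref{thm_local_cohom_general_fiber_bigrad_mod} to $\Sym_R(M(b))$ via the presentation \autoref{eq_bigrad_Sym_as_quotient}, then use base change for symmetric algebras together with \autoref{eq_shifts_sym_powers}. Your added verifications (the sign check $\gamma_j=b-\mu_j\ge 0$ and the compatibility of the irrelevant ideals) are exactly the details the paper leaves implicit; one small slip is the inequality $\beta(M)\ge\max\{\mu_j\}$, which goes the other way for a non-minimal presentation---but since one may take a minimal presentation this does not affect the argument.
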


\subsubsection{Rees powers}

Here the notation and terminology are the ones of \cite{ram1}.
In particular,  the {\em Rees algebra} $\Rees_R(M)$ of a finitely generated $R$-module $M$ having rank is defined to be the symmetric algebra modulo its $R$-torsion.
With this definition,  $\Rees_R(M)$ inherits from $\Sym_R(M)$  a natural bigraded structure.

There are a couple of ways to introduce the $k$-th power of $M$:
	\begin{equation*}
	M^k := {\left[\Rees_R(M)\right]}_{(*,k)} = \bigoplus_{j \in \ZZ} {\left[\Rees_R(M)\right]}_{(j,k)}
	\simeq  \Sym_R^k(M)/\tau_R(\Sym_R^k(M)),
\end{equation*}
where $\tau_R$ denotes $R$-torsion.

In addition, there is an $R$-embedding 
$
\iota_k : M^k = {\big[\Rees_R(M)\big]}_{(*,k)}  \hookrightarrow {\big[R[t_1,\ldots,t_m]\big]}_{(*,k)}
$
out of an embedding 
\begin{equation}
\label{eq_embedding_Rees}
\Rees_R(M) \hookrightarrow \Sym_R(F) \simeq R[t_1,\ldots,t_m],
\end{equation}
induced by a given embedding of $M^1=M/\tau_R(M)$ into a free $R$-module $F$ of rank equal to the rank of $M$.

\begin{definition}\rm
	\label{def_spec_powers_mods}
	Let $M$ be a finitely generated graded $R$-module having rank. 
	For $\nnn \in \Spec(A)$ and $k \ge 0$, the {\em specialization of $M^k$ with respect to $\nnn$} is the following $R \otimes_A k(\nnn)$-module
	$$
	\SSS_\nnn(M^k): = \IM\Big( \iota_k \otimes_A k(\nnn): M^k \otimes_A k(\nnn) \rightarrow {\big[(R \otimes_A k(\nnn))[t_1,\ldots,t_m]\big]}_{(*,k)} \Big).
	$$
	If no confusion arises, one sets $\SSS_\nnn(M):=\SSS_\nnn(M^1)$.
\end{definition}

\begin{proposition}
	\label{lem_specialization_powers_fiber}
	Let $M$ be a finitely generated graded $R$-module having rank. 
	Then, there is a dense open subset $V \subset \Spec(A)$ such that, for all $\nnn \in V$ and $k \ge 0$, one has
	$$
	\SSS_\nnn(M^k) \simeq M^k \otimes_A k(\nnn).
	$$	
	In particular, $\SSS_\nnn(M^k)$ is independent of the chosen embedding $M^1\hookrightarrow F$. 
	\begin{proof}
		From \autoref{eq_embedding_Rees}, consider the short exact sequence
		$$
		 0 \rightarrow \Rees_R(M) \rightarrow R[t_1,\ldots,t_m] \rightarrow \frac{R[t_1,\ldots,t_m]}{\Rees_R(M)} \rightarrow 0.
		$$
		By using \autoref{thm_generic_freeness_mult_grad}(i)  (as applied in the notation there with $M=\BBB=R[t_1,\ldots,t_m]$, $E=\Rees_R(M)$ and $H=0$) choose $a \in A$ avoiding the minimal primes of $A$ such that $\frac{R[t_1,\ldots,t_m]}{\Rees_R(M)} \otimes_{A} A_a$ is a projective $A_a$-module.
		So, the result follows by setting $V = D(a) \subset \Spec(A)$.
	\end{proof}
\end{proposition}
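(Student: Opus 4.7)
The plan is to reduce the question to a single generic flatness statement applied to the bigraded cokernel of the embedding $\Rees_R(M) \hookrightarrow R[t_1,\ldots,t_m]$ coming from (\ref{eq_embedding_Rees}), which produces a single open set $V$ working uniformly for all $k \ge 0$.

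First, I would fix the embedding $\iota : M^1 \hookrightarrow F$ together with the induced bigraded inclusion $\Rees_R(M) \hookrightarrow R[t_1,\ldots,t_m]$, and consider the short exact sequence of finitely generated bigraded $R[t_1,\ldots,t_m]$-modules
$$
0 \rightarrow \Rees_R(M) \rightarrow R[t_1,\ldots,t_m] \rightarrow Q \rightarrow 0,
$$
where $Q$ denotes the cokernel. Upon restricting to the $k$-th strand in the $t$-grading, this recovers exactly the embedding $\iota_k : M^k \hookrightarrow {\left[R[t_1,\ldots,t_m]\right]}_{(*,k)}$ together with its cokernel ${[Q]}_{(*,k)}$.

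Next, I would invoke \autoref{thm_generic_freeness_mult_grad}(i) applied to the bigraded $A$-algebra $\BBB := R[t_1,\ldots,t_m]$ with the bigraded $\BBB$-module taken to be $\BBB$ itself, $E:=\Rees_R(M)$, and $H:=0$, so that the quotient $\mathfrak{M}=\BBB/(E+H)$ is precisely $Q$. Since $A$ is reduced, this yields an element $a \in A$ avoiding the minimal primes of $A$ such that $Q_a$ is $A_a$-projective, hence $A_a$-flat. Consequently, for every $\nnn \in V := D(a)$, the above short exact sequence remains exact after tensoring over $A$ with $k(\nnn)$, and restricting to the $(*,k)$-strand shows that $\iota_k \otimes_A k(\nnn)$ is injective. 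By definition of $\SSS_\nnn(M^k)$ as the image of this map, we obtain $\SSS_\nnn(M^k) \simeq M^k \otimes_A k(\nnn)$ for all $k \ge 0$. The independence from the chosen embedding is then automatic, since $M^k \otimes_A k(\nnn)$ is intrinsic to $M$.

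The only subtle point in this strategy is the uniformity in $k$: a priori one might fear needing a separate shrinking for each symmetric/Rees power. The reason a single open suffices is precisely that \autoref{thm_generic_freeness_mult_grad}(i) applies to the whole bigraded cokernel $Q$ at once (it does not split off into strands), so a single element $a$ avoiding the minimal primes trivializes all the $k$-th components simultaneously; this is the only conceptual hurdle, and it is handled cleanly by that generic freeness theorem.
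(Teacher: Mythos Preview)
Your proof is correct and follows exactly the paper's approach: both apply \autoref{thm_generic_freeness_mult_grad}(i) with $\BBB = R[t_1,\ldots,t_m]$, $E=\Rees_R(M)$, $H=0$ to make the bigraded cokernel $Q$ generically $A$-projective, then take $V=D(a)$. You have simply spelled out more explicitly why projectivity of $Q_a$ forces $\iota_k\otimes_A k(\nnn)$ to remain injective for all $k$ simultaneously, which the paper leaves implicit.
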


\begin{corollary}
	\label{cor_specialization_module}
	Under {\rm \autoref{setup_specialization_powers}},
	let $M$ be a finitely generated graded $R$-module having rank.
	Then, there exists a dense open subset $\mathcal{U} \subset \Spec(A)$ such that, for all $i \ge 0, j \in \ZZ$, the function
	\begin{equation*}
	\Spec(A) \longrightarrow \ZZ, \qquad
	\nnn \in \Spec(A) \mapsto\dim_{k(\nnn)}\Bigg( {\left[\HL^i\big(\SSS_\nnn(M)\big)\right]}_{j} \Bigg)
	\end{equation*}
	is locally constant on $\mathcal{U}$.
	\begin{proof}
		It follows from \autoref{cor_dimension_fiber_local_cohom} and \autoref{lem_specialization_powers_fiber}.
	\end{proof}
\end{corollary}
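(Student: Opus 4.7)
The plan is to reduce the claim about the specialization $\SSS_\nnn(M)$ to the already established generic freeness statement for $M \otimes_A k(\nnn)$, by showing that on a dense open subset of $\Spec(A)$ the two coincide. This way the corollary becomes a simple combination of \autoref{lem_specialization_powers_fiber} (which identifies $\SSS_\nnn(M)$ with the naive tensor product generically) and \autoref{cor_dimension_fiber_local_cohom} (which handles the local constancy of graded Hilbert functions of local cohomology for the naive tensor product).

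First I would invoke \autoref{lem_specialization_powers_fiber}, taking $k=1$, to obtain a dense open $V \subset \Spec(A)$ on which the isomorphism $\SSS_\nnn(M) \simeq M \otimes_A k(\nnn)$ holds. Next, since $A$ is Noetherian and reduced by the hypothesis of \autoref{setup_specialization_powers}, I would apply \autoref{cor_dimension_fiber_local_cohom} to the finitely generated graded $R$-module $M$ to obtain a dense open $\mathcal{U}' \subset \Spec(A)$ on which the function $\nnn \mapsto \dim_{k(\nnn)}\left[\HL^i(M \otimes_A k(\nnn))\right]_j$ is locally constant, for every $i \ge 0$ and $j \in \ZZ$.

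Setting $\mathcal{U} := V \cap \mathcal{U}'$ yields a dense open subset of $\Spec(A)$, and for any $\nnn \in \mathcal{U}$ the identification from \autoref{lem_specialization_powers_fiber} gives
\[
\dim_{k(\nnn)}\left[\HL^i(\SSS_\nnn(M))\right]_j \;=\; \dim_{k(\nnn)}\left[\HL^i(M \otimes_A k(\nnn))\right]_j,
\]
so local constancy on $\mathcal{U}$ transfers immediately. There is no real obstacle here since both ingredients are already in hand; the only thing worth checking carefully is that the open set $V$ of \autoref{lem_specialization_powers_fiber} is indeed independent of $i$ and $j$ (which it is, being defined purely in terms of generic freeness of $R[t_1,\ldots,t_m]/\Rees_R(M)$), so the same $\mathcal{U}$ works uniformly in $i,j$.
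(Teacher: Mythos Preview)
Your approach is exactly the one the paper takes: combine \autoref{lem_specialization_powers_fiber} with \autoref{cor_dimension_fiber_local_cohom} and intersect the two dense opens. One small correction: \autoref{lem_specialization_powers_fiber} with $k=1$ gives $\SSS_\nnn(M)\simeq M^1\otimes_A k(\nnn)$ where $M^1=M/\tau_R(M)$, not $M\otimes_A k(\nnn)$ in general, so you should apply \autoref{cor_dimension_fiber_local_cohom} to $M^1$ rather than to $M$; with that adjustment the argument goes through verbatim.
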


Next is the principal result about the specialization of the Rees powers of a graded module. The proof is again short because it is downplayed by the use of previous theorems.

\begin{theorem}
	\label{thm_general_fibers_powers}
Under {\rm \autoref{setup_specialization_powers}},
let $M$ be a finitely generated graded $R$-module having rank. Fix an integer $j \in \ZZ$ and let $b \in \ZZ$ be an integer such that $b\ge \beta(M)$. 
	Then, there exists a dense open subset $\mathcal{U}_j \subset \Spec(A)$ such that, for all $i\ge 0, k\ge 0$, the function
	\begin{equation*}
	\Spec(A) \longrightarrow \ZZ, \qquad
	\nnn \in \Spec(A) \mapsto\dim_{k(\nnn)}\left( {\left[\HL^i\Big( \SSS_\nnn(M^k) \Big)\right]}_{j+kb} \right)
	\end{equation*}
	is locally constant on $\mathcal{U}_j$.
	\begin{proof}
		One extends \autoref{eq_bigrad_Sym_as_quotient} to the following surjective bihomogeneous homomorphisms 
		$$
		\AAA \twoheadrightarrow \BBB \twoheadrightarrow \Sym_R(M(b)) \twoheadrightarrow \Rees_R(M(b)).
		$$
		Note that 
		$
		{\left[\Rees_R\left(M(b)\right)\right]}_{(j,k)} \simeq {\left[M^k\right]}_{j+kb}
		$
		for all $j \in \ZZ, k \ge 0$.
		One sets $\Rees_R(M(b))$ to be the bigraded module in the statement of \autoref{thm_local_cohom_general_fiber_bigrad_mod}.
		Then,  let $U_j \subset \Spec(A)$ be a dense open subset  obtained from \autoref{thm_local_cohom_general_fiber_bigrad_mod}.
		Let $V \subset  \Spec(A)$ be a dense open subset from \autoref{lem_specialization_powers_fiber}.
		Therefore, the result follows by setting $\mathcal{U}_j = U_j \cap V$.
	\end{proof}
\end{theorem}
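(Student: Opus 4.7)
The plan is to realize all the Rees powers $M^k$ as graded strands of a single bigraded object — the shifted Rees algebra $\Rees_R(M(b))$ — and then reduce to \autoref{thm_local_cohom_general_fiber_bigrad_mod}, combined with the identification $\SSS_\nnn(M^k)\simeq M^k\otimes_A k(\nnn)$ supplied by \autoref{lem_specialization_powers_fiber}.

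First I would choose a graded presentation $F_1\xrightarrow{\varphi} F_0\twoheadrightarrow M\to 0$ with $F_0=\bigoplus_{j=1}^{s} R(-\mu_j)$, noting that $\mu_j\le\beta(M)\le b$. Pick a positively graded polynomial ring $T=A[x_1,\ldots,x_r]$ surjecting onto $R$, and set $\AAA=T[y_1,\ldots,y_s]$ with $\bideg(x_i)=(\deg(x_i),0)$ and $\bideg(y_j)=(\mu_j-b,1)$. Since $\mu_j-b\le 0$, the bigraded polynomial ring $\AAA$ satisfies the hypotheses of \autoref{setup_mult_grad_local_cohom} with $\gamma_j=b-\mu_j\ge 0$. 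The Rees algebra $\Rees_R(M(b))$ is a finitely generated bigraded $\AAA$-module, and the key identification, analogous to \autoref{eq_shifts_sym_powers}, is
\[
	[\Rees_R(M(b))]_{(j,k)} \,\simeq\, [M^k]_{j+kb}\qquad (k\ge 0,\ j\in\ZZ).
\]

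Next, since $\mm$ is generated by elements of pure first-grade bidegree, the \v{C}ech complex computing $\HL^i$ preserves the $k$-grading, so taking the $(*,k)$-strand commutes with $\HL^i$; this compatibility is preserved after base change to any residue field $k(\nnn)$. Consequently
\[
	[\HL^i(\Rees_R(M(b))\otimes_A k(\nnn))]_{(j,k)} \,\simeq\, [\HL^i(M^k\otimes_A k(\nnn))]_{j+kb}.
\]
Applying \autoref{thm_local_cohom_general_fiber_bigrad_mod} to $\M=\Rees_R(M(b))$ at the fixed first-grading index $j$ produces a dense open $U_j\subset\Spec(A)$ over which the function $\nnn\mapsto \dim_{k(\nnn)} [\HL^i(M^k\otimes_A k(\nnn))]_{j+kb}$ is locally constant for every $i\ge 0$ and every $k\ge 0$ (the latter corresponding to the index $\nu=k$ in the second coordinate).

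Finally, \autoref{lem_specialization_powers_fiber} furnishes a dense open $V\subset\Spec(A)$ on which $\SSS_\nnn(M^k)\simeq M^k\otimes_A k(\nnn)$ for every $\nnn\in V$ and every $k\ge 0$ simultaneously; setting $\mathcal{U}_j:=U_j\cap V$ then yields the conclusion. The main delicate point is arranging that \autoref{setup_mult_grad_local_cohom} is literally satisfied by the bigrading on $\Rees_R(M(b))$ — which is precisely why the shift $b\ge\beta(M)$ is imposed — together with the two compatibilities that local cohomology commutes with the $k$-strand and with base change along $A\to k(\nnn)$; the latter is already taken care of in the proof of \autoref{thm_local_cohom_general_fiber_bigrad_mod}, and the uniformity in $k$ of \autoref{lem_specialization_powers_fiber} is what allows a single dense open to handle all $k$ at once.
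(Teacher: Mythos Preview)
Your argument is correct and follows essentially the same route as the paper: realize $\Rees_R(M(b))$ as a finitely generated bigraded $\AAA$-module satisfying \autoref{setup_mult_grad_local_cohom} (which is exactly where the hypothesis $b\ge\beta(M)$ enters), apply \autoref{thm_local_cohom_general_fiber_bigrad_mod} to obtain $U_j$, and intersect with the dense open $V$ from \autoref{lem_specialization_powers_fiber}. You have in fact spelled out a couple of compatibilities (that $\HL^i$ respects the $(*,k)$-strand, and the explicit verification that $\gamma_j=b-\mu_j\ge 0$) that the paper leaves implicit.
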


\subsection{Rational maps and the saturated special fiber}
\label{section_rational_maps}

In this section one revisits the problem of specialization of rational maps, as studied in \cite{SPECIALIZATION_RAT_MAPS}.
We recover some of the results there as a consequence of \autoref{thm_general_fibers_powers} and \cite[Corollary 2.12]{MULTPROJ}.
Quite naturally, one also studies the saturated special fiber ring and the $j$-multiplicity of a general specialization of an ideal.

For the basics of rational maps with source and target projective varieties defined over an arbitrary Noetherian  domain, the reader is referred to \cite[Section 3]{SPECIALIZATION_RAT_MAPS}.

Throughout this section the following setup is used. 

\begin{setup}\rm
	\label{setup_rat_maps}
	Let $A$ be a Noetherian domain and $R$ be the standard graded polynomial ring $R=A[x_0,\ldots,x_r]$. 
	Fix homogeneous elements $\{g_0,\ldots,g_s\} \subset R$   of the same degree $d>0$ and let  $\GG :  \PP_A^r \dashrightarrow \PP^s_A$ denote the corresponding rational map given by the representative $\mathbf{g}=(g_0:\cdots:g_s)$. 
	Set $\mm={\left[R\right]}_+ = (x_0,\ldots,x_r) \subset R$.

	We specialize this rational map as follows.
	Given $\nnn \in \Spec(A)$, take the rational map $\GG(\nnn): \PP_{k(\nnn)}^r \dashrightarrow \PP_{k(\nnn)}^s$ with representative
	$$\pi_\nnn\left({\mathbf{g}}\right)=(\pi_\nnn({g_0}):\cdots:\pi_\nnn({g_s})),
	$$
	where $\pi_\nnn({g_i})$ is the image of $g_i$ under the canonical map $\pi_\nnn : R \rightarrow R \otimes_A k(\nnn)$.
	
	Set $I=(g_0,\ldots,g_s) \subset R$ and note that $\SSS_\nnn(I)=(\pi_\nnn({g_0}),\ldots,\pi_\nnn({g_s})) \subset R \otimes_A k(\nnn)$ and that $\SSS_\nnn(I^k) = \SSS_\nnn(I)^k \subset R \otimes_A k(\nnn)$, for $\nnn \in \Spec(A), k \ge 0$ (see \autoref{def_spec_powers_mods}).
	
	Given $\nnn \in \Spec(A)$, let $Y \subset \PP_A^s$ and  $Y(\nnn) \subset \PP_{k(\nnn)}^s$ denote the respective closed images of $\GG$ and of $\GG(\nnn)$.  
\end{setup}

	Recall that the rational map  $\GG(\nnn)$ is \textit{generically finite} if one of the following equivalent conditions is satisfied:
	\begin{enumerate}[(i)]
		\item The field extension $K(Y(\nnn)) \hookrightarrow K(\PP_{k(\nnn)}^r)$ is finite, where $K(\PP_{k(\nnn)}^r)$ and $K(Y(\nnn))$ denote the fields of rational functions of $\PP_{k(\nnn)}^r$ and $Y(\nnn)$, respectively.
		\item $\dim(Y(\nnn)) = \dim(\PP_{k(\nnn)}^r)=r$.
		\item The \textit{analytic spread}  
		$
		\ell\big(\SSS_\nnn(I)\big):=\dim\Big(\Rees_{R \otimes_A k(\nnn)}\left(\SSS_\nnn(I)\right)/\mm\Rees_{R \otimes_A k(\nnn)}\left(\SSS_\nnn(I)\right) \Big)$
		of $\SSS_\nnn(I)$ attains the maximum possible value
		 $\dim\left(R \otimes_A k(\nnn)\right)=r+1.
		$
	\end{enumerate} 
	The \textit{degree} of $\GG(\nnn)$ is defined as
	$
	\deg(\GG(\nnn)):=\left[K(\PP_{k(\nnn)}^r):K(Y(\nnn))\right].
	$

\begin{definition}[\cite{ACHILLES_MANARESI_J_MULT}] \rm 
	For any $\nnn \in \Spec(A)$ and any homogeneous ideal $J \subset R \otimes_A k(\nnn)$, the \emph{$j$-multiplicity} of $J$ is given by 
	$$
	j(J) \,:=\, r
	!\lim\limits_{n\rightarrow\infty} \frac{\dim_{k(\nnn)}\Big(\HH_{\mm}^0\left(J^n/J^{n+1}\right)\Big)}{n^{r}}.
	$$
\end{definition}

\begin{definition}[\cite{MULTPROJ}] \rm 
	For any $\nnn \in \Spec(A)$ and any homogeneous ideal $J \subset R \otimes_A k(\nnn)$ generated by elements of the same degree $d>0$, the \emph{saturated special fiber ring} of $J$ is given by 
	$$
	\widetilde{\mathfrak{F}_{_{R \otimes_A k(\nnn)}}}\left(J\right) \,:=\, \bigoplus_{n=0}^\infty {\left[\big(J^n:\mm^\infty\big)\right]}_{nd}.
	$$
\end{definition}

Next is the main result of this section.

\begin{theorem}
	Under {\rm \autoref{setup_rat_maps}}, assume in addition that   $\GG\left((0)\right)$ is generically finite.
	Then, there exists a dense open subset $\mathcal{U} \subset \Spec(A)$ such that $\GG(\nnn)$ is generically finite for any $\nnn \in \mathcal{U}$ and the functions
	\begin{enumerate}[\rm (i)]
		\item $\nnn \in \Spec(A) \mapsto \deg\left(\GG(\nnn)\right)$,
		\item $\nnn \in \Spec(A) \mapsto \deg_{\PP_{k(\nnn)}^s}\left(Y(\nnn)\right)$,
		\item $\nnn \in \Spec(A) \mapsto e\left(\widetilde{\mathfrak{F}_{_{R \otimes_A k(\nnn)}}}\left(\SSS_\nnn(I)\right)\right)$ \mbox{and}
		\item 	$\nnn \in \Spec(A) \mapsto j\left(\SSS_\nnn(I)\right)$
	\end{enumerate}
	are constant on $\mathcal{U}$.
	\begin{proof}
		We first argue for (i) and (ii).
		By \autoref{lem_specialization_powers_fiber} there exists a dense open subset $U \subset \Spec(A)$ such that 
		$$
		\Rees_{_{R \otimes_A k(\nnn)}}\big(\SSS_\nnn(I)\big)=\bigoplus_{k=0}^\infty\SSS_\nnn(I)^k \,\simeq\, \Rees_R(I) \otimes_A k(\nnn) \simeq \bigoplus_{k=0}^\infty I^k \otimes_A k(\nnn)
		$$
		for all $\nnn \in U$.
		One has an isomorphism $Y(\nnn) \simeq \Proj\Big(k(\nnn)\big[\pi_\nnn({g_0}),\ldots,\pi_\nnn({g_s})\big]\Big)$ (see, e.g., \cite[Definition-Proposition 3.12]{SPECIALIZATION_RAT_MAPS}).
		By restricting to the zero graded part in the $R$-grading, we obtain the following isomorphisms of graded $k(\nnn)$-algebras
		$$
		k(\nnn)\big[\pi_\nnn({g_0}),\ldots,\pi_\nnn({g_s})\big] \,\simeq\, {\left[\Rees_{_{R \otimes_A k(\nnn)}}\big(\SSS_\nnn(I)\big)\right]}_0 \,\simeq\, {\left[\Rees_R(I)\right]}_0 \otimes_A k(\nnn)
		$$
		for any $\nnn \in U$ (as before in \autoref{eq_R_grad_bigrad_convention}, one uses the notation ${\left[\Rees_R(I)\right]}_0=\bigoplus_{\nu =0}^\infty{\left[\Rees_R(I)\right]}_{(0,\nu)}$).
		
		By \autoref{thm_generic_freeness_mult_grad}(ii), as applied with $\mathfrak{M}:=\Rees_R(I)$, there is an element $0\neq a \in A$ such that all the graded components of  ${\left[\Rees_R(I)\right]}_0 \otimes_A A_a$ are free $A_a$-modules.
		Set $V=D(a) \subset \Spec(A)$.
		Since $\GG{\left((0)\right)}$ is generically finite,  one has  $\dim\big({\left[\Rees_R(I)\right]}_0 \otimes_A k((0))\big)=\dim\left(R \otimes_{A} k((0))\right)$, and so it follows that 
		$$
		\dim\big({\left[\Rees_R(I)\right]}_0 \otimes_A k(\nnn)\big)=\dim\big({\left[\Rees_R(I)\right]}_0 \otimes_A k((0))\big)=\dim\left(R((0))\right)=\dim\left(R \otimes_A k(\nnn)\right)
		$$
		and that
		$$
		\deg_{\PP_{k(\nnn)}^s}\left(Y(\nnn)\right) = e\big({\left[\Rees_R(I)\right]}_0 \otimes_A k(\nnn)\big)=e\big({\left[\Rees_R(I)\right]}_0 \otimes_A k((0))\big) = \deg_{\PP_{k((0))}^s}\left(Y_{(0)}\right)
		$$
		for any $\nnn \in U \cap V$.
		
		For any $\nnn \in U \cap V$,  \cite[Corollary 2.12]{MULTPROJ} yields the formula 
		$$
		\deg_{\PP_{k(\nnn)}^s}\left(Y(\nnn)\right)\left(\deg\left(\GG(\nnn)\right)-1\right) = r!\lim_{k\rightarrow \infty} \frac{\dim_{k(\nnn)}\left({\left[\HL^1\Big( \SSS_\nnn(I^k) \Big)\right]}_{kd}\right)}{k^{r}}.
		$$
		Let $W \subset \Spec(A)$ be a dense open subset obtained from \autoref{thm_general_fibers_powers} with  $M:=I(d)$.
		It then follows that the function
		$$
		\nnn \in \Spec(A) \mapsto \deg_{\PP_{k(\nnn)}^s}\left(Y(\nnn)\right)\left(\deg\left(\GG(\nnn)\right)-1\right)
		$$ 
		is constant on $U \cap V \cap W.$
		
		So, the result follows by taking $\mathcal{U} = U \cap V \cap W$.
		
		\smallskip
		
		(iii) It follows from parts (i), (ii) and \cite[Theorem 2.4]{MULTPROJ}.
		
		\smallskip
		
		(iv) It follows from parts (i), (ii) and \cite[Theorem 5.3]{KPU_blowup_fibers}.
	\end{proof}
\end{theorem}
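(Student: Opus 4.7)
The plan is to construct $\mathcal{U}$ as the intersection of three dense open subsets of $\Spec(A)$, each arising from a key result proved earlier in the paper, and then to deduce (iii) and (iv) formally from (i) and (ii) via known formulas in the literature.

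First, I would invoke \autoref{lem_specialization_powers_fiber} applied to $M:=I(d)$ to obtain a dense open $U_1\subset \Spec(A)$ on which the formation of Rees powers commutes with specialization, that is, $\SSS_\nnn(I^k)\simeq I^k\otimes_A k(\nnn)$ for all $k\ge 0$ and all $\nnn\in U_1$. Restricting to the zero part in the $R$-grading yields an isomorphism of graded $k(\nnn)$-algebras $k(\nnn)[\pi_\nnn(g_0),\ldots,\pi_\nnn(g_s)]\simeq [\Rees_R(I)]_0\otimes_A k(\nnn)$, so the coordinate ring of $Y(\nnn)\subset\PP_{k(\nnn)}^s$ is identified with the fiber of the fixed $A$-algebra $[\Rees_R(I)]_0$. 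Next, I would apply \autoref{thm_generic_freeness_mult_grad}(ii) to $\mathfrak{M}:=\Rees_R(I)$ to obtain a dense open $U_2\subset \Spec(A)$ on which every graded component of $[\Rees_R(I)]_0$ is $A$-free, so that on $U_1\cap U_2$ the Hilbert function of $[\Rees_R(I)]_0\otimes_A k(\nnn)$ is independent of $\nnn$. This gives constancy of both $\dim Y(\nnn)$ and the degree $\deg_{\PP_{k(\nnn)}^s}(Y(\nnn))$; under the standing hypothesis that $\GG((0))$ is generically finite, one has $\dim Y(\nnn)=r$ throughout, hence $\GG(\nnn)$ is generically finite and (ii) is settled.

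For (i), I would call on \cite[Corollary 2.12]{MULTPROJ}, which presents the product $\deg_{\PP_{k(\nnn)}^s}(Y(\nnn))\bigl(\deg\GG(\nnn)-1\bigr)$ as the limit $r!\lim_{k\to\infty} k^{-r}\dim_{k(\nnn)}[\HL^1(\SSS_\nnn(I^k))]_{kd}$. Applying \autoref{thm_general_fibers_powers} with $M=I(d)$, $b=d$, and $j=0$ produces a dense open $U_3\subset\Spec(A)$ on which, for every fixed $k\ge 0$, the dimension $\dim_{k(\nnn)}[\HL^1(\SSS_\nnn(I^k))]_{kd}$ is locally constant. Since a pointwise limit of a family of locally constant functions is locally constant wherever it exists, and generic finiteness on $U_1\cap U_2$ ensures finiteness of the limit, setting $\mathcal{U}:=U_1\cap U_2\cap U_3$ yields constancy of $\deg(\GG(\nnn))$. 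Finally, statements (iii) and (iv) should follow from (i) and (ii) through \cite[Theorem 2.4]{MULTPROJ} and \cite[Theorem 5.3]{KPU_blowup_fibers}, respectively, which express the saturated special fiber multiplicity and the $j$-multiplicity in terms of $\deg Y(\nnn)$ and $\deg \GG(\nnn)$.

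The hard part is the passage to the limit in (i): one needs uniformity in $k$ to convert pointwise constancy of each $\dim_{k(\nnn)}[\HL^1(\SSS_\nnn(I^k))]_{kd}$ into constancy of the whole limit. This uniformity is exactly what \autoref{thm_general_fibers_powers} supplies, since fixing the shift $j=0$ lets one use the \emph{same} open set across all powers $k$. Everything else reduces to generic freeness of finitely generated graded $A$-modules sitting inside the bigraded Rees algebra, combined with formulas already present in the literature.
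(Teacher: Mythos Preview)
Your proposal follows essentially the same route as the paper: the open set $\mathcal{U}$ is built as the intersection of three dense opens coming from \autoref{lem_specialization_powers_fiber}, \autoref{thm_generic_freeness_mult_grad}(ii), and \autoref{thm_general_fibers_powers}, and (iii)--(iv) are then read off from \cite[Theorem 2.4]{MULTPROJ} and \cite[Theorem 5.3]{KPU_blowup_fibers}.

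Two small corrections are in order. First, your parameters in the application of \autoref{thm_general_fibers_powers} do not match: with $M=I(d)$ one has $\beta(M)=0$, so you must take $b=0$ (not $b=d$); otherwise, since $(I(d))^k=I^k(kd)$, the theorem would control $\bigl[\HL^1(\SSS_\nnn(I^k))\bigr]_{2kd}$ rather than $\bigl[\HL^1(\SSS_\nnn(I^k))\bigr]_{kd}$. Equivalently, take $M=I$ with $b=d$ and $j=0$. Second, your sentence ``a pointwise limit of a family of locally constant functions is locally constant wherever it exists'' is false in general. What actually makes the passage to the limit work here is that $A$ is a domain, so $\Spec(A)$ is irreducible and every nonempty open subset is connected; hence on $\mathcal{U}$ ``locally constant'' already means ``constant'', and a limit of constants is constant.
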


\subsection{Numerical invariants}
\label{section_num_invariants}

The goal is to show that dimension, depth, $a$-invariants and regularity of a module are locally constant under tensor product with a general fiber and general specialization. 
As a side-result, we provide a slight improvement of the upper semi-continuity theorem (see \cite[Chapter III, Theorem 12.8]{HARTSHORNE})) for the dimension of sheaf cohomology of a general fiber.

For a finitely generated graded $R$-module $M$ the $i$-th $a$-invariant is defined as
\begin{equation}
\label{eq_a_invariants}
a^i(M) := \begin{cases}
\max\big\{n \mid {\left[\HL^i(M)\right]}_n \neq 0\big\}  \quad \text{if } M \neq 0\\
-\infty \qquad\qquad \qquad\qquad\qquad\; \text{if } M = 0
\end{cases}
\end{equation}
and the Castelnuovo--Mumford regularity is given by 
\begin{equation}
\label{eq_regularity}
\reg(M) := \max\big\{ a^i(M) + i \mid i \ge 0 \big\}.
\end{equation}

We first state the local behavior of the numerical invariants for the fibers.

\begin{proposition}
	\label{thm_general_fibers_symmetric_powers_homolog_prop}
	Under {\rm \autoref{setup_specialization_powers}},
	let $M$ be a finitely generated graded $R$-module. 
	Then, there exists a dense open subset $\mathcal{U} \subset \Spec(A)$ such that the functions
	\begin{enumerate}[\rm (i)]
		\item $\nnn \in \Spec(A) \mapsto \dim\left( M \otimes_A k(\nnn) \right)$,
		\item 
		$\nnn \in \Spec(A) \mapsto \depth\left( M \otimes_A k(\nnn)\right) $,
		\item 
		$\nnn \in \Spec(A) \mapsto a^i\left(M \otimes_A k(\nnn) \right)$ for $i \ge 0$, \, \mbox{and}
		\item
		$\nnn \in \Spec(A) \mapsto \reg\left( M \otimes_A k(\nnn)\right)$
	\end{enumerate}
	are locally constant on $\mathcal{U}$.
	\begin{proof}
		It follows from \autoref{cor_dimension_fiber_local_cohom}, \cite[Corollary 6.2.8]{Brodmann_Sharp_local_cohom}, \autoref{eq_a_invariants} and \autoref{eq_regularity}. 
	\end{proof}
\end{proposition}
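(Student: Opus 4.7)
The plan is to reduce all four invariants to the graded-piece-dimension information already provided by \autoref{cor_dimension_fiber_local_cohom}, using the standard local cohomology characterizations $\depth(M\otimes_A k(\nnn))=\min\{i:\HL^i(M\otimes_A k(\nnn))\neq 0\}$ and $\dim(M\otimes_A k(\nnn))=\max\{i:\HL^i(M\otimes_A k(\nnn))\neq 0\}$ (see \cite[Corollary 6.2.8]{Brodmann_Sharp_local_cohom}), together with \autoref{eq_a_invariants} for $a^i$ and \autoref{eq_regularity} for $\reg$. First I would apply \autoref{cor_dimension_fiber_local_cohom} to obtain a dense open subset $\mathcal{U}\subset\Spec(A)$ on which the function $\pp\mapsto\dim_{k(\pp)}\left[\HL^i(M\otimes_A k(\pp))\right]_\nu$ is locally constant for every $i\ge 0$ and every $\nu\in\ZZ$.

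The delicate step is that each of the four invariants involves, a priori, checking infinitely many graded degrees or infinitely many cohomological indices, so the pointwise local constancy given by \autoref{cor_dimension_fiber_local_cohom} is not by itself enough to imply the local constancy of a supremum or of the vanishing of a whole local cohomology module. To circumvent this I would invoke the duality in \autoref{thm_relative_graded_local_duality}(III): after possibly shrinking $\mathcal{U}$ to a principal open $D(a)$ supplied there, the module $\HL^i(M\otimes_A A_a)$ is identified, through $A_a$-duality, with the graded $A_a$-dual of the finitely generated graded $(R\otimes_A A_a)$-module $\Ext_{R\otimes_A A_a}^{r-i}\left(M\otimes_A A_a,\,R(-\delta)\otimes_A A_a\right)$. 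Since the graded dual of a finitely generated graded module over a positively graded ring vanishes in all sufficiently large degrees, there is an integer $b$, independent of $\pp\in\mathcal{U}$, such that $\left[\HL^i(M\otimes_A k(\pp))\right]_\nu=0$ for every $\nu>b$ and every $i\ge 0$; Grothendieck vanishing further provides the uniform bound $\HL^i(M\otimes_A k(\pp))=0$ for all $i>r$.

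With these uniform bounds in hand, each of the four invariants becomes a function of only finitely many graded pieces. The vanishing of $\HL^i(M\otimes_A k(\pp))$ as a whole reduces to the simultaneous vanishing of the finitely many components $\left[\HL^i(M\otimes_A k(\pp))\right]_\nu$ with $\nu\le b$, which is a locally constant condition on $\mathcal{U}$; combined with the characterizations above this yields (i) and (ii). For (iii), the integer $a^i(M\otimes_A k(\pp))$ equals the largest $\nu\le b$ with $\left[\HL^i(M\otimes_A k(\pp))\right]_\nu\neq 0$, and is therefore locally constant since each individual graded dimension is and the search range is finite. Finally (iv) follows from (iii) by taking the maximum of $a^i+i$ over the finite range $0\le i\le r$. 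I expect the main obstacle to be precisely the uniform-bound argument in the second paragraph: without simultaneous control over all graded degrees through \autoref{thm_relative_graded_local_duality}, the local constancy on $\mathcal{U}$ could fail to survive the supremum and vanishing operations.
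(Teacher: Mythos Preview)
Your overall strategy matches the paper's: reduce everything to \autoref{cor_dimension_fiber_local_cohom}, the local-cohomology characterizations of $\dim$ and $\depth$ from \cite[Corollary 6.2.8]{Brodmann_Sharp_local_cohom}, and the definitions \autoref{eq_a_invariants} and \autoref{eq_regularity}. You are also right to flag the passage from ``each graded piece has locally constant dimension'' to ``the invariants built from all of them are locally constant'' as the only nontrivial step---the paper's one-line proof suppresses it entirely---and your appeal to \autoref{thm_relative_graded_local_duality}(III) to obtain a uniform upper bound $b$ on the degrees where $[\HL^i(M\otimes_A k(\pp))]_\nu$ can be nonzero is correct and is exactly what is needed for part (iii) when $a^i>-\infty$.

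There is, however, a slip in the sentence ``the vanishing of $\HL^i(M\otimes_A k(\pp))$ as a whole reduces to the simultaneous vanishing of the finitely many components $[\HL^i(M\otimes_A k(\pp))]_\nu$ with $\nu\le b$'': there are \emph{infinitely} many integers $\nu\le b$, and local cohomology modules are typically nonzero in unboundedly many negative degrees (e.g.\ $\HL^r$ of a polynomial ring over $k(\pp)$). So your uniform upper bound alone does not reduce the global vanishing of $\HL^i$ to a finite check, and the same issue recurs in (iii) at points where $a^i=-\infty$. A clean fix is to argue on connected components: the open set $\mathcal{U}=D(a)\subset\Spec(A)$ is Noetherian, hence has finitely many connected components, each of which is clopen; on a connected set every locally constant function is constant, so the set $\{\nu\in\ZZ:[\HL^i(M\otimes_A k(\pp))]_\nu\neq 0\}$ is independent of $\pp$ within each component, and therefore so are its emptiness, its supremum, and all the derived invariants. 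With this adjustment your proof is complete and agrees with what the paper's terse citation list is presumably encoding.
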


Next is the local behavior of the numerical invariants for the specialization.

\begin{proposition}
	\label{thm_general_fibers_powers_homolog_prop}
	Under {\rm \autoref{setup_specialization_powers}},
	let $M$ be a finitely generated graded $R$-module having rank. 
	Then, there exists a dense open subset $\mathcal{U} \subset \Spec(A)$ such that the functions
	\begin{enumerate}[\rm (i)]
		\item $\nnn \in \Spec(A) \mapsto \dim\left(\SSS_\nnn\left(M\right) \right)$,
		\item 
		$\nnn \in \Spec(A) \mapsto \depth\left(\SSS_\nnn\left(M\right) \right)$,
		\item 
		$\nnn \in \Spec(A) \mapsto a^i\left(\SSS_\nnn\left(M\right) \right)$ for $i \ge 0$ \mbox{and}
		\item
		$\nnn \in \Spec(A) \mapsto \reg\left(\SSS_\nnn\left(M\right) \right)$
	\end{enumerate}
	are locally constant on $\mathcal{U}$.
	\begin{proof}
		It follows from \autoref{cor_specialization_module},  \cite[Corollary 6.2.8]{Brodmann_Sharp_local_cohom}, \autoref{eq_a_invariants} and \autoref{eq_regularity}. 
	\end{proof}
\end{proposition}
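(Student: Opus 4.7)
My plan is to derive all four assertions from Corollary \ref{cor_specialization_module} together with the projectivity statement of Theorem \ref{thm_relative_graded_local_duality}(III)(a) that underlies it. Applied through a surjection of a positively graded polynomial ring onto $R$ (so that local cohomology with respect to $[R]_+$ agrees with local cohomology with respect to the irrelevant ideal upstairs), Theorem \ref{thm_relative_graded_local_duality}(III)(a) furnishes an element $a \in A$ avoiding the minimal primes of $A$ such that $\HL^i(M \otimes_A A_a)$ is projective over $A_a$ for every $i \ge 0$. Each graded component ${[\HL^i(M \otimes_A A_a)]}_n$ is an $A_a$-direct summand of this projective module, and is finitely generated over $A_a$ by \autoref{rem_finite_gen_grad_parts_local_cohom}; hence it is a finitely generated projective $A_a$-module and therefore has locally constant rank on $\Spec(A_a)$.

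Since $\Spec(A_a)$ is Noetherian it has finitely many clopen connected components, so an integer-valued locally constant function on $\Spec(A_a)$ is constant on each such component. Intersecting $D(a)$ with the dense open $V$ of \autoref{lem_specialization_powers_fiber} to identify $\SSS_\nnn(M)$ with $M \otimes_A k(\nnn)$, and setting $\mathcal{U} := D(a) \cap V$, the upshot is that for every pair $(i,n)$ the dimension $\dim_{k(\nnn)} {[\HL^i(\SSS_\nnn(M))]}_n$ is constant on each connected component of $\mathcal{U}$. In particular, the entire graded vanishing profile of $\HL^i(\SSS_\nnn(M))$, and hence both its global vanishing and its top nonvanishing degree, are locally constant on $\mathcal{U}$.

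From this structural observation the four assertions follow. For (iii), $a^i(\SSS_\nnn(M))$ is by \autoref{eq_a_invariants} the top nonvanishing degree of $\HL^i(\SSS_\nnn(M))$, which is locally constant by the preceding paragraph. For (iv), \autoref{eq_regularity} expresses $\reg(\SSS_\nnn(M)) = \max_i(a^i(\SSS_\nnn(M)) + i)$ as a finite maximum (the range of $i$ being bounded by the number of generators of $R$ over $A$) of locally constant functions. For (i) and (ii), the Brodmann--Sharp characterizations $\dim(\SSS_\nnn(M)) = \max\{i : \HL^i(\SSS_\nnn(M)) \neq 0\}$ and $\depth(\SSS_\nnn(M)) = \min\{i : \HL^i(\SSS_\nnn(M)) \neq 0\}$ from \cite[Corollary 6.2.8]{Brodmann_Sharp_local_cohom}, combined with the locally constant vanishing condition established above, yield the result over the same bounded range of $i$. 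The main subtlety lies in recognizing that it is the projectivity of the entire module $\HL^i(M \otimes_A A_a)$, rather than mere local constancy of individual graded dimensions, that is needed to handle $a^i$ and $\reg$: precisely this stronger statement makes the complete vanishing pattern of $\HL^i$ uniform on connected components and thus prevents the a priori infinite-degree supremum defining $a^i$ from spoiling local constancy.
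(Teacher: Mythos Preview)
Your proof is correct and follows the same approach as the paper's one-line argument: invoke \autoref{cor_specialization_module} (with its underlying \autoref{lem_specialization_powers_fiber} and \autoref{thm_relative_graded_local_duality}(III)), then read off (i)--(iv) from the cohomological characterizations of $\dim$ and $\depth$ together with \autoref{eq_a_invariants} and \autoref{eq_regularity}. Your careful treatment of the infinite supremum in $a^i$ via the finitely many connected components of the Noetherian open $\mathcal{U}$ is exactly the point the paper leaves implicit.

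One minor comment: your closing sentence slightly overstates the role of projectivity. The very connected-components argument you give shows that the \emph{statement} of \autoref{cor_specialization_module} alone --- a single $\mathcal{U}$ valid simultaneously for all $(i,j)$ --- already forces the full graded vanishing profile of $\HL^i(\SSS_\nnn(M))$ to be constant on each component, and hence $a^i$ and $\reg$ to be locally constant. Projectivity of $\HL^i(M\otimes_A A_a)$ is the mechanism behind that corollary, but you do not need to invoke it separately once the corollary is in hand.
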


An additional outcome is a slight improvement of the upper semicontinuity theorem.

\begin{proposition}
	Let $A$ be denote a  reduced Noetherian ring.
	Let $R$ be a standard graded finitely generated $A$-algebra and  $X: = \Proj(R)$.
	Given  a finitely generated graded $R$-module $M$, 
	 there exists a dense open subset $\mathcal{U} \subset \Spec(A)$ such that, for all $i\ge 0, n \in \ZZ$, the function
	\begin{equation*}
	\Spec(A) \longrightarrow \ZZ, \qquad
	\nnn \in \Spec(A) \mapsto\dim_{k(\nnn)}\Big( \HH^i\big(X \times_A k(\nnn), \widetilde{M(n)} \otimes_{A} k(\nnn)\big) \Big)
	\end{equation*}
	is locally constant on $\mathcal{U}$. 
	\begin{proof}
		For $i \ge 1$, one has that $\HH^i\big(X \times_A k(\nnn), \widetilde{M(n)} \otimes_{A} k(\nnn)\big) \simeq {\left[\HL^{i+1}(M\otimes_A k(\nnn))\right]}_n$ (see, e.g., \cite[Theorem A4.1]{EISEN_COMM}), and so in this case the result is obtained directly from \autoref{cor_dimension_fiber_local_cohom}.
		
		For $i = 0$, one has the short exact sequence 
		\begin{align*}
			0 \rightarrow {\left[\HL^{0}(M\otimes_A k(\nnn))\right]}_n \rightarrow {\left[M\otimes_A k(\nnn)\right]}_n \rightarrow \HH^0&\big(X \times_A k(\nnn), \widetilde{M(n)} \otimes_{A} k(\nnn)\big) \\
			&\rightarrow {\left[\HL^{1}(M\otimes_A k(\nnn))\right]}_n \rightarrow 0
		\end{align*}
		(see, e.g., \cite[Theorem A4.1]{EISEN_COMM}).
		From \autoref{cor_generic_projective}, there is a dense open subset $U \subset \Spec(A)$ such that $\dim_{k(\nnn)}\left({\left[M\otimes_A k(\nnn)\right]}_n\right)$ is locally constant for all $\nnn \in U$.
		Take a dense open subset $V \subset \Spec(A)$ given as in \autoref{cor_dimension_fiber_local_cohom}.
		So, the result follows in both cases by setting $\mathcal{U} = U \cap V$.
	\end{proof}
\end{proposition}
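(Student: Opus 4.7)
My plan is to reduce the statement to the local cohomology setting of \autoref{section_gen_freeness_local_cohom} through the standard bridge between projective sheaf cohomology and graded local cohomology with respect to the irrelevant ideal. Since $R$ is standard graded, fixing any graded surjection $A[x_1,\ldots,x_r]\twoheadrightarrow R$ lets me view $M$ as a finitely generated graded module over $T:=A[x_1,\ldots,x_r]$; the modules $\HL^j(M)$ (taken with respect to the graded irrelevant ideal) are unaffected by this change of base ring, so $M$ falls under the scope of \autoref{setup_generic_freeness_local_cohom}. Base-changing along $A\to k(\nnn)$ yields $X\times_A k(\nnn)=\Proj(R\otimes_A k(\nnn))$ and the associated sheaf $\widetilde{M(n)}\otimes_A k(\nnn)$ is the one attached to the graded module $(M\otimes_A k(\nnn))(n)$.

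For $i\geq 1$, I would invoke the canonical comparison isomorphism
$$
\HH^i\bigl(X\times_A k(\nnn),\,\widetilde{M(n)}\otimes_A k(\nnn)\bigr) \;\simeq\; \bigl[\HL^{i+1}\bigl(M\otimes_A k(\nnn)\bigr)\bigr]_n,
$$
valid for every $\nnn\in\Spec(A)$ and every $n\in\ZZ$. Applying \autoref{cor_dimension_fiber_local_cohom} to $M$ (viewed as a graded $T$-module) then yields a dense open subset $V\subset\Spec(A)$ on which the function $\nnn\mapsto\dim_{k(\nnn)}[\HL^j(M\otimes_A k(\nnn))]_\nu$ is locally constant for all $j\geq 0$ and all $\nu\in\ZZ$ simultaneously. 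This takes care of every $i\geq 1$ and every $n$ at once on $V$.

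For $i=0$, I would use the four-term exact sequence
\begin{align*}
0 \to \bigl[\HL^0(M\otimes_A k(\nnn))\bigr]_n \to \bigl[M\otimes_A k(\nnn)\bigr]_n \to\,& \HH^0\bigl(X\times_A k(\nnn),\,\widetilde{M(n)}\otimes_A k(\nnn)\bigr) \\
&\to \bigl[\HL^1(M\otimes_A k(\nnn))\bigr]_n \to 0,
\end{align*}
so that $\HH^0$ is sandwiched between three terms of known behavior: the outer two are already controlled on $V$ by the previous step, while the middle term $[M\otimes_A k(\nnn)]_n=[M]_n\otimes_A k(\nnn)$ is handled by \autoref{cor_generic_projective} applied to the graded $T$-module $M$, which produces an element $a\in A$ avoiding the minimal primes of $A$ making $M_a$ projective over $A_a$; this forces local constancy of $\nnn\mapsto\dim_{k(\nnn)}[M]_n\otimes_A k(\nnn)$ on $D(a)$ uniformly in $n$. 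Setting $\mathcal{U}=V\cap D(a)$ and invoking additivity of $k(\nnn)$-dimension in the displayed exact sequence concludes the proof. The only delicate point is ensuring that a \emph{single} dense open $\mathcal{U}$ serves all pairs $(i,n)$ at once; this is precisely the strength of \autoref{cor_dimension_fiber_local_cohom} and \autoref{cor_generic_projective}, each of which produces a single open set (respectively, single element) that works for every graded strand simultaneously.
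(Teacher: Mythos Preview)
Your proof is correct and follows essentially the same route as the paper: split into the cases $i\ge 1$ and $i=0$, invoke the sheaf-to-local-cohomology comparison (\cite[Theorem~A4.1]{EISEN_COMM}), and combine \autoref{cor_dimension_fiber_local_cohom} with \autoref{cor_generic_projective} on the intersection of the two resulting dense opens. Your explicit remark about passing to a polynomial presentation $T=A[x_1,\dots,x_r]\twoheadrightarrow R$ to place $M$ under \autoref{setup_generic_freeness_local_cohom} is a point the paper leaves implicit.
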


\section*{Acknowledgments}

The authors are indebted to the referee for a constructive criticism of the paper.

\bibliographystyle{elsarticle-num} 
\begin{bibdiv}
\begin{biblist}

\bib{ACHILLES_MANARESI_J_MULT}{article}{
      author={Achilles, R\"udiger},
      author={Manaresi, Mirella},
       title={Multiplicity for ideals of maximal analytic spread and
  intersection theory},
        date={1993},
     journal={J. Math. Kyoto Univ.},
      volume={33},
      number={4},
       pages={1029\ndash 1046},
}

\bib{Brodmann_Sharp_local_cohom}{book}{
      author={Brodmann, M.~P.},
      author={Sharp, R.~Y.},
       title={Local cohomology.},
     edition={Second},
      series={Cambridge Studies in Advanced Mathematics},
   publisher={Cambridge University Press, Cambridge},
        date={2013},
      volume={136},
        note={An algebraic introduction with geometric applications},
}

\bib{MULTPROJ}{article}{
      author={Bus{\'e}, Laurent},
      author={Cid-Ruiz, Yairon},
      author={D'Andrea, Carlos},
       title={Degree and birationality of multi-graded rational maps},
        date={2020},
     journal={Proc. Lond. Math. Soc.},
      volume={121},
      number={4},
       pages={743\ndash 787},
}

\bib{CHARDIN_POWERS_IDEALS}{article}{
      author={Chardin, Marc},
       title={Powers of ideals and the cohomology of stalks and fibers of
  morphisms},
        date={2013},
        ISSN={1937-0652},
     journal={Algebra Number Theory},
      volume={7},
      number={1},
       pages={1\ndash 18},
}

\bib{DUALITY_TAMENESS}{incollection}{
      author={Chardin, Marc},
      author={Cutkosky, Steven~Dale},
      author={Herzog, J\"{u}rgen},
      author={Srinivasan, Hema},
       title={Duality and tameness},
        date={2008},
      volume={57},
       pages={137\ndash 155},
        note={Special volume in honor of Melvin Hochster},
}

\bib{CHH}{article}{
      author={Chardin, Marc},
      author={Ha, Dao~Thanh},
      author={Hoa, L\^{e}~Tu\^{a}n},
       title={Castelnuovo-{M}umford regularity of {E}xt modules and homological
  degree},
        date={2011},
     journal={Trans. Amer. Math. Soc.},
      volume={363},
      number={7},
       pages={3439\ndash 3456},
}

\bib{SPECIALIZATION_RAT_MAPS}{article}{
      author={Cid-Ruiz, Yairon},
      author={Simis, Aron},
       title={Degree of rational maps and specialization},
        date={2019},
     journal={arXiv preprint arXiv:1901.06599},
        note={to appear in International Mathematics Research Notices},
}

\bib{EISEN_COMM}{book}{
      author={Eisenbud, David},
       title={Commutative algebra with a view towards algebraic geometry},
      series={Graduate Texts in Mathematics, 150},
   publisher={Springer-Verlag},
        date={1995},
}

\bib{EISENBUD_HUNEKE_SPECIALIZATION}{article}{
      author={Eisenbud, David},
      author={Huneke, Craig},
       title={Cohen-{M}acaulay {R}ees algebras and their specialization},
        date={1983},
        ISSN={0021-8693},
     journal={J. Algebra},
      volume={81},
      number={1},
       pages={202\ndash 224},
}

\bib{HARTSHORNE}{book}{
      author={Hartshorne, Robin},
       title={Algebraic geometry},
   publisher={Springer-Verlag, New York-Heidelberg},
        date={1977},
        note={Graduate Texts in Mathematics, No. 52},
}

\bib{HOCHSTER_ROBERTS_0}{article}{
      author={Hochster, Melvin},
      author={Roberts, Joel~L.},
       title={Rings of invariants of reductive groups acting on regular rings
  are {C}ohen-{M}acaulay},
        date={1974},
     journal={Advances in Math.},
      volume={13},
       pages={115\ndash 175},
}

\bib{HOCHSTER_ROBERTS}{article}{
      author={Hochster, Melvin},
      author={Roberts, Joel~L.},
       title={The purity of the frobenius and local cohomology},
        date={1976},
     journal={Advances in Mathematics},
      volume={21},
      number={2},
       pages={117 \ndash  172},
}

\bib{Residual_int}{article}{
      author={Huneke, Craig},
      author={Ulrich, Bernd},
       title={Residual intersections},
        date={1988},
     journal={J. Reine Angew. Math.},
      volume={390},
       pages={1\ndash 20},
}

\bib{Generic_residual_int}{incollection}{
      author={Huneke, Craig},
      author={Ulrich, Bernd},
       title={Generic residual intersections},
        date={1990},
   booktitle={Commutative algebra ({S}alvador, 1988)},
      series={Lecture Notes in Math.},
      volume={1430},
   publisher={Springer, Berlin},
       pages={47\ndash 60},
}

\bib{Hurwitz}{article}{
      author={{Hurwitz}, A.},
       title={{\"Uber die Tr\"agheitsformen eines algebraischen Moduls}},
        date={1913},
        ISSN={0373-3114; 1618-1891/e},
     journal={{Annali di Mat. (3)}},
      volume={20},
       pages={113\ndash 151},
}

\bib{JOUANOLOU_IDEAUX_RES}{article}{
      author={Jouanolou, J.~P.},
       title={Id\'{e}aux r\'{e}sultants},
        date={1980},
     journal={Adv. in Math.},
      volume={37},
      number={3},
       pages={212\ndash 238},
}

\bib{KATZAMAN}{article}{
      author={Katzman, Mordechai},
       title={An example of an infinite set of associated primes of a local
  cohomology module},
        date={2002},
     journal={J. Algebra},
      volume={252},
      number={1},
       pages={161\ndash 166},
}

\bib{SIMIS_ULRICH_SPECIALIZATION}{incollection}{
      author={Kennedy, Gary},
      author={Simis, Aron},
      author={Ulrich, Bernd},
       title={Specialization of {R}ees algebras with a view to tangent star
  algebras},
        date={1994},
   booktitle={Commutative algebra ({T}rieste, 1992)},
   publisher={World Sci. Publ., River Edge, NJ},
       pages={130\ndash 139},
}

\bib{Krull_I}{article}{
      author={Krull, Wolfgang},
       title={Parameterspezialisierung in {P}olynomringen},
        date={1948},
        ISSN={0003-9268},
     journal={Arch. Math.},
      volume={1},
       pages={56\ndash 64},
}

\bib{Krull_II}{article}{
      author={Krull, Wolfgang},
       title={Parameterspezialisierung in {P}olynomringen. {II}. {D}as
  {G}rundpolynom},
        date={1948},
     journal={Arch. Math. (Basel)},
      volume={1},
       pages={129\ndash 137},
}

\bib{KPU_blowup_fibers}{article}{
      author={Kustin, Andrew},
      author={Polini, Claudia},
      author={Ulrich, Bernd},
       title={Blowups and fibers of morphisms},
        date={2016},
     journal={Nagoya Math. J.},
      volume={224},
      number={1},
       pages={168\ndash 201},
}

\bib{MATSUMURA}{book}{
      author={Matsumura, Hideyuki},
       title={Commutative ring theory},
     edition={1},
      series={Cambridge Studies in Advanced Mathematics volume 8},
   publisher={Cambridge University Press},
        date={1989},
}

\bib{TRUNG_SPECIALIZATION}{article}{
      author={Nhi, Dam~Van},
      author={Trung, Ng\^{o}~Vi\^{e}t},
       title={Specialization of modules},
        date={1999},
     journal={Comm. Algebra},
      volume={27},
      number={6},
       pages={2959\ndash 2978},
}

\bib{Trung_specialization_local}{article}{
      author={Nhi, Dam~Van},
      author={Trung, Ng\^{o}~Vi\^{e}t},
       title={Specialization of modules over a local ring},
        date={2000},
     journal={J. Pure Appl. Algebra},
      volume={152},
      number={1-3},
       pages={275\ndash 288},
        note={Commutative algebra, homological algebra and representation
  theory (Catania/Genoa/Rome, 1998)},
}

\bib{Seidenberg}{article}{
      author={Seidenberg, A.},
       title={The hyperplane sections of normal varieties},
        date={1950},
     journal={Trans. Amer. Math. Soc.},
      volume={69},
       pages={357\ndash 386},
}

\bib{ram1}{article}{
      author={Simis, Aron},
      author={Ulrich, Bernd},
      author={Vasconcelos, Wolmer~V.},
       title={Rees algebras of modules},
        date={2003},
     journal={Proc. London Math. Soc. (3)},
      volume={87},
      number={3},
       pages={610\ndash 646},
}

\bib{KSMITH}{article}{
      author={Smith, Karen~E.},
       title={Local cohomology and base change},
        date={2018},
        ISSN={0021-8693},
     journal={J. Algebra},
      volume={496},
       pages={11\ndash 23},
}

\bib{Trung_specialization_in_german}{incollection}{
      author={Trung, Ng\^{o}~Vi\^{e}t},
       title={Spezialisierungen allgemeiner {H}yperfl\"{a}chenschnitte und
  {A}nwendungen},
        date={1980},
   booktitle={Seminar {D}. {E}isenbud/{B}. {S}ingh/{W}. {V}ogel, {V}ol. 1},
      series={Teubner-Texte zur Math.},
      volume={29},
   publisher={Teubner, Leipzig},
       pages={4\ndash 43},
}

\bib{Ulrich_RedNo}{article}{
      author={Ulrich, Bernd},
       title={Ideals having the expected reduction number},
        date={1996},
     journal={Amer. J. Math.},
      volume={118},
      number={1},
       pages={17\ndash 38},
}

\end{biblist}
\end{bibdiv}

\end{document}